\numberwithin{equation}{section}
\newtheorem{theorem}{Theorem}[section]
\newtheorem{defi}[theorem]{Definition}
\newtheorem{remark}[theorem]{Remark}
\newtheorem{Cor}[theorem]{Corollary}
\newtheorem{prop}[theorem]{Proposition}
\newtheorem{lemma}[theorem]{Lemma}
\def\Rr{{\mathbb R}}
\def\Rn{{\mathbb R}^N}
\def\R2n{{\mathbb R}^{2N}}
\def\l{\lambda}
\def\pl{(-\Delta)_p^s}
\def\cq{\displaystyle\int\limits_{\Omega}\frac{|u(y)|^r}{|x-y|^{\mu}}|u(x)|^{r-2}u(x)dy}
\def\O{\Omega}
\def\w{W_0^{s,p}(\O)}
\def\a{\alpha}
\def\p{(\textbf{P})}
\def\cq{\displaystyle\left(\int\limits_\O\frac{|u(y)|^r}{|x-y|^\a}dy\right)|u(x)|^{r-2}u(x)}
\def\N{{\mathbb N}}
\begin{document}
	\title[Some existence and uniqueness results for logistic Choquard equations]
	{Some existence and uniqueness results for logistic Choquard equations}
	\author[G.C.Anthal]{G.C.Anthal}
	\address{
		Gurdev Chand Anthal:	
		\endgraf
		Department of Mathematics
		\endgraf
		Indian Institute of Technology, Delhi, Hauz Khas
		\endgraf
		New Delhi-110016 
		\endgraf
		India
		\endgraf
		{\it E-mail address} {\rm Gurdevanthal92@gmail.com}
	}
	\author[J. Giacomoni]{J. Giacomoni}
	\address{
		J. Giacomoni:	
		\endgraf
		LMAP (UMR E2S UPPA CNRS 5142) Bat. IPRA,
		\endgraf
		Avenue de l'Universit\'{e}
		\endgraf
		64013 Pau
		\endgraf
		France
		\endgraf
		{\it E-mail address} {\rm jacques.giacomoni@univ-pau.fr}
	}
	
	\author[K.Sreenadh]{K.Sreenadh}
	\address{
		Konijeti Sreenadh:
		\endgraf
		Department of Mathematics
		\endgraf
		Indian Institute of Technology, Delhi, Hauz Khas
		\endgraf
		New Delhi-110016 
		\endgraf
		India
		\endgraf
		{\it E-mail address} {\rm sreenadh@maths.iitd.ac.in}
	}
	\begin{abstract}
		We consider the following doubly nonlocal nonlinear logistic problem driven by the fractional $p$-Laplacian 
		\begin{equation*}  
			\pl u = f(x,u) -\cq ~\text{in}~ \O,
			~u=0 ~\text{in}~ \Rn\setminus\O.
		\end{equation*}
		Here $ \O \subset \Rn (N\geq2)$ is a bounded domain with $ C^{1,1}$ boundary $\partial \O$, $ s \in (0,1) $, $p \in (1,\infty)$ are  such that $ps < N$. Also $p_{s,\a}^\#\leq r<\infty$
		, where  $p_{s,\a}^\#=(2N-\a)/2N$.
		Under suitable and general assumptions on the nonlinearity $f$, we study the existence, nonexistence, uniqueness, and regularity of weak solutions. As for applications, we treat cases of subdiffusive type logistic Choquard problem. We also consider in the superdiffusive case the Brezis-Nirenberg type problem with logistic Choquard and show the existence of a nontrivial solution for a suitable choice of $\l$. Finally for a particular choice of $f$ viz. $f(x,t)=\l t^{q-1}$ with $1<p<2r<q$, we show the existence of at least one energy nodal solution.
	\end{abstract}
	\maketitle
	\textit{Keywords and phrases:} Fractional $p$-Laplacian, logistic equation, Choquard nonlinearity, Hardy-Littlewood-Sobolev inequality, sign-changing solutions.
	\section{Introduction}
	In the present article we study the following doubly nonlocal nonlinear logistic problem driven by the fractional $p$-Laplacian	\begin{equation*}  
		\p	\begin{cases}
			\pl u = f(x,u) -\cq ~&\text{in}~ \O,\\
			u=0 ~&\text{in}~ \Rn\setminus \O.
		\end{cases} 
	\end{equation*}
	Here $ \O \subset \Rn (N\geq2)$ is a bounded domain with $ C^{1,1}$ boundary $\partial \O$; $ s \in (0,1) $, $p \in (1,\infty)$ are  such that $ps < N$. Also $p_{s,\a}^\#\leq r<\infty$, where  $p_{s,\a}^\#=(2N-\a)/2N$. The leading operator is the fractional $p$-Laplacian, defined for all $ u:\Rn \rightarrow \Rr$ smooth enough and $x \in \Rn $ by 
	\begin{equation}\label{fl}
		\pl u(x)=2 \lim\limits_{\epsilon \rightarrow 0^+}\int\limits_{\{|x-y|>\epsilon\}} \frac{|u(x)-u(y)|^{p-2}(u(x)-u(y))}{|x-y|^{N+ps}} dy
	\end{equation} 	
	(which for $p=2$ reduces to the linear fractional Laplacian, up to a dimensional constant $C(N,S)>0$). The assumptions on $f$ are given below:
	\begin{itemize}
		\item [$(f_1)$] $f:\O \times \Rr\rightarrow \Rr$ is a Carath\'{e}odory function.
		\item [$(f_2)$] $f:\O \times (0,\infty)\rightarrow \Rr_+$ is measurable in $x$ and continuous in $t$.
		\item [$(f_3)$] $\displaystyle\frac{f(x,t)}{t^{p-1}}$ is decreasing.
		\item [$(f_4)$] There exists some constant $C_0>0$ such that for a.e. $x \in  \O$ and for all $t \in \Rr:$
		$$|f(x,t)\leq C_0(1+|t|^{q-1}) ,  $$ where $1<q\leq p_s^\ast$ and $p_s^\ast =\frac{Np}{N-ps}$.
	\end{itemize}
	Alternatively to $(f_4)$, we assume
	\begin{itemize} \item[$(f_5)$] $f(x,t)=\l t^{q-1}$, where $q<1$. 
	\end{itemize}
	The nonlocal operators, particularly fractional $p$-Laplacian have  wide applications in the real world such as finance, obstacle problems, phase transition, image processing, material science, etc. For more details, we refer to the works \cite{C,GO,P} and the references therein. Given these real-world applications, nonlocal operators are intensively studied and there has been an ample amount of various works on problems involving such kind of operators. For the reader's convenience we first give a brief review of the literature to the problems involving Choquard and singular nonlinearities:\\
	There is a large literature available for the problems with Choquard nonlinearity due to its vast application in physical modeling see for instance the works of Pekar \cite{Pe} and Lieb \cite{Li}. For detailed studies on the existence and regularity of weak solutions for these types of problems we refer in the local setting to \cite{MS4} and the references therein. Alves, Figueiredo and Yang have considered in \cite{AFY}  a class of Schr\"odinger equations in three dimensions and via the penalization method, proved the existence of non trivial solutions. In the non local case, Choquard type equations have been investigated more recently and arise for instance in the study of mean field limit of weakly interacting molecules, in the quantum mechanical theory and in the dynamics of relativistic Boson-stars (see \cite{daveniaetal} and references therein). In \cite{daveniaetal} a Schr\"odinger type problem with Hartree type nonlinearity and involving the fractional laplacian is studied. Existence, nonexistence and properties of solutions are proved in this paper. We can also quote the references \cite{alvesetal}, \cite{Bonannoetal}, \cite{Siciliano2etal} and \cite{luxu} for semiclassical  limit type and concentration behaviour results. 
	
	For the Brezis-Niremberg type results with Choquard nonlinearity, we can refer \cite{GY} in the local setting and \cite{MS} for fractional diffusion case.\\ 
	The semi-linear problems with singular nonlinearity were studied in the seminal work of Crandall, Rabinowitz, and Tartar \cite {CRT}. This work motivates many further contributions for elliptic and parabolic singular equations where existence, uniqueness, regularity and asymptotic behaviour, multiplicity are discussed. The case  of fractional elliptic equations with singular  nonlinearities were  studied more recently by
	Barrios et al. in \cite{BBMP},  Adimurthi and et al. in \cite{adi} whereas \cite{AGW}, \cite{CMB}, and \cite{GDS} have investigated singular problems involving the $p$-fractional Laplacian. The doubly nonlocal and singular case is dealt with in \cite{jds}.\\
	Logistic equations found many of their applications in the real-life world. One of the most important applications is in mathematical biology where the parabolic semilinear logistic equation describes the evolution
	of spatial distribution of a biological population in the presence of constant rates of reproduction and mortality (Verhulst’s law), see \cite{GM}. This is the obvious reason why, in the study of logistic type equations, authors usually consider positive solutions. More recently, evolutive systems involving logistic terms have been studied as a model for the biological phenomenon of chemotaxis \cite{TW}. Regarding the elliptic counterpart, it models an equilibrium distribution, see \cite{CDT}. Due to these important applications, logistic equations are very broadly studied. A vast literature on logistic equations with elliptic counterpart is available, we refer for instance \cite{AB,C,IMP,MP,S} and the references therein.\\
	In \cite{PW2}, Papageorgiou and Winkert study the following nonlinear Dirichlet problem with subdiffusive and equidiffusive reactions involving Leray-Lions type operator:
	\begin{align}\label{0.2}
		-\text{div}(a(\nabla u))=\l u^{q-1}-f(x,u)~\text{in}~\O,u=0~\text{on}~\partial\O.
	\end{align}
	Here $\O \subset \Rn$ is a bounded domain with a $C^2$-boundary $\partial\O$, $1<q\leq p$ and \linebreak $f:\O \times \Rr \rightarrow \Rr$ is a Carath\'{e}odory function such that $f(x,\cdot)$ exhibits a\linebreak $(p-1)$-superlinear growth near $\pm \infty$ for a.a. $x \in \O$. By applying variational methods, the authors show that for any $\l>0$, the problem \eqref{0.2} has at least three nontrivial smooth solutions (one positive, one negative, and one sign-changing). Also in the particular case $(p,2)$-equations, they show the existence of four nontrivial smooth solutions using Morse Theory. A similar result for the superdiffusive case can be found in \cite{GRP}. Recently, in \cite{PW1} the authors consider the following singular $(p,q)$-equation with logistic perturbation
	\begin{align}
		-\Delta_p u-\Delta_q u =\l [u^{-\eta}+u^{\theta-1}]-f(x,u)~\text{in}~\O,~u=0~\text{on}~\partial\O~\text{and}~u>0~\text{in}~\O.
	\end{align}
	Here $\l>0$, $0<\eta<1$ and $1<p<q<\theta$. The function $f:\O \times \Rr \rightarrow \Rr$ is a Carath\'{e}odory function which is $(\theta-1)$-superlinear near $\infty$ for a.a. $x \in \O$. The authors prove the existence and nonexistence result for  positive solutions depending on the value of $\l$.\\
	The fractional counterpart is not so intensively studied. Recently, in \cite{IMP} Iannizzotto, Mosconi, and Papageorgiou consider the following nonlinear elliptic equation for the fractional-order
	\begin{align}
		\pl u=\l u^{q-1}-u^{r-1}~\text{in}~\O,~u>0~\text{in}~\O,~u=0~\text{in}~\Rn\setminus\O.
	\end{align}
	Here $ \O \subset \Rn (N\geq2)$ is a bounded domain with $ C^{1,1}$ boundary $\partial \O$, $ s \in (0,1) $, $p \geq 2$ are such that $ps < N$. In the subdiffusive and equidiffusive cases, the authors prove the existence and uniqueness of the positive solution for a suitable range of values of $\l$ whereas in superdiffusive case, they established a bifurcation result.\\
	As far as we know, there is no study that involves the Choquard nonlinearity as logistic reaction. We aim to fill this gap in the present paper. Using the variational methods, we show the existence and uniqueness of a positive solution between a given sub-supersolution pair. The main difficulty that arises here, is the lack of comparison principles. We overcome this difficulty by introducing a specific notion of global sub and supersolutions (inspired cooperative systems, see Section \ref{P}). Then we present some practical situations where such sub and supersolution pair can be constructed (see Corollaries \ref{c3.1} and \ref{c3.2}). We also consider the corresponding Brezis-Nirenberg type problem:
	\begin{align}\label{e1.5}
		\pl u =\l u^{p-1}+u^{p_s^\ast-1}-\cq~\text{in}~\O,~u=0~\text{in}~\Rn\setminus\O.
	\end{align}
	We show the existence of nontrivial solutions by proving a compactness result for Palais-Smale sequence at level strictly below some critical level. We make use of the auxiliary functions and estimates introduced in \cite{MPSY} to construct such Palais-Smale sequences.\\ 
	Finally, by using the minimization techniques on the Nehari nodal set, we show in the subcritical case the existence of a least energy sign changing solution to the problem
	\begin{equation*}
		(P_\l)	\begin{cases}	\pl u =\l u^{q-1}-\cq~&\text{in}~\O,\\
			u =0~&\text{in}~\Rn\setminus \O, \end{cases}
	\end{equation*} with $1<p<2r<q<p_s^\ast.$\\
	\textbf{Structure of paper:} In Section \ref{P}, we give the preliminaries required to obtain
	the main results. We also present a short result about the regularity of the nonnegative solution of $\p$. In Section \ref{RC}, we discuss the regular case and obtain the
	existence and uniqueness of the positive solution of $\p$. As examples we present the
	cases when $f(t) = \l t^{q-1}$, where $1 < q < p < r$ and $f(t) = \l t^{p-1} + \l t^{q-1}$, where again
	$1<q<p<r$. In Section \ref{BNP} we consider the Brezis-Nirenberg problem with logistic term and show the existence of solution for a suitable range of $\l$. Lastly in Section \ref{SCS}, we show the existence of at least one energy nodal solution to $(P_\l)$.\\
	\textbf{Notations:} We fix the following notations throughout the paper.
	\begin{itemize}
		\item For any $t \in \Rr$, $m >0$ we set $t^{m}=|t|^{m-1}t$.
		\item For any $D \subset \Rn$ we shall denote by $|D|$ the Lebesgue measure of $D$. 
		\item By $w_1 \leq w_2$ (resp. $w_2 \leq w_1$), we shall mean that $w_1(x)\leq w_2(x)$(resp. $w_2(x)\leq w_1(x)$) for a.e. $x \in \O$ and for any two measurable functions $w_1,w_2:\O \rightarrow \Rr$.
		\item By $w^+$ and $w^-$, we shall denote the positive and negative parts respectively of a measurable function $w$.
		\item For any ordered set $Z$, $Z_+$ shall denote its nonnegative ordered cone.
		\item For all $p \in [1,\infty]$, $|\cdot|_{p}$ denotes the standard norm of $L^{p}(\O)$ (or $L^{p}(\Rn)$, which will be clear from the context).
		\item The positive constants are generally denoted by $C$. 	
	\end{itemize}
	\section{Preliminaries}\label{P}	
	In this section, we recall some preliminary results that are required in the later sections and also give the statements of the main results. Firstly, we collect some well known results regarding fractional Sobolev spaces. For $0<s<1$ and $1<p<\infty$, the fractional Sobolev spaces are defined as 
	$$ W^{s,p}(\Rn)= \left\{u \in L^p(\Rn): \int\limits_{\Rn}\int\limits_{\Rn} \frac{|u(x)-u(y)|^p}{|x-y|^{N+ps}}dxdy< \infty\right\}                                 $$
	equipped with the norm 
	$$\|u\|_{ W^{s,p}(\Rn)} = |u|_p + \left(     \int\limits_{\Rn}\int\limits_{\Rn} \frac{|u(x)-u(y)|^p}{|x-y|^{N+ps}}dxdy    \right)^\frac{1}{p}.$$
	
	We also define 
	$$W_{0}^{s,p}(\O) = \{ u \in W^{s,p}(\Rn):u=0~ \text{in} ~\Rn \setminus \O \} $$
	with respect to the norm
	$$\|u\| =  \left(     \int\limits_{\Rn}\int\limits_{\Rn} \frac{|u(x)-u(y)|^p}{|x-y|^{N+ps}}dxdy    \right)^\frac{1}{p} =  \left(     \int\limits_Q \frac{|u(x)-u(y)|^p}{|x-y|^{N+ps}}dxdy    \right)^\frac{1}{p}$$
	
	where $Q =\Rr^{2n}\setminus (\O^c \times \O^c)$. The space $\w$ is a uniformly convex, separable Banach space  whose dual space is denoted by $W^{-s,p^{\prime}}(\O)$ and where $p^{\prime}=\frac{p}{p-1} $ is the conjugate exponent of $p$ (see \cite{NPV}). The embedding $\w \hookrightarrow L^\nu(\O) $ is continuous for all $ \nu \in [1,p_s^\ast]$ and compact for all $\nu \in [1,p_s^\ast)$.  We also set from \cite[Definition 2.1]{IMS}, the following space:
	
	$$\widetilde{W}^{s,p}(\O)=\left\{u \in L^p_{\text{loc}}(\Rn): \exists~ U \Supset \O ~\text{s.t.} ~u \in W^{s,p}(U),~ \int\limits_{\Rn}\frac{|U(x)|^{p-1}}{(1+|x|)^{N+ps}}dx < \infty  \right    \}.   $$
	By \cite[Lemma 2.3]{IMS}, for any $u \in \w$ we can define $\pl u \in W^{-s,p^\prime}(\O)$ by setting for all $v \in \w$
	$$ \langle \pl u ,v \rangle =\int\limits_{\Rn}\int\limits_{\Rn} \frac{(u(x)-u(y))^{p-1}(v(x)-v(y))}{|x-y|^{N+ps}}dxdy .   $$
	The definition above agrees with \eqref{fl} as soon as $u$ is smooth enough (for instance, if $u \in S(\Rn)$). By \cite[Lemma~ 2.1]{FI},  $\pl : \w \rightarrow W^{-s,p^\prime}(\O)$ is a monotone, continuous, $(S)_+$-operator, namely, whenever $(u_n)$ is a
	sequence in $\w$ such that $u_n \rightharpoonup u$ in $ \w$
	and
	$$\limsup_{n} \langle \pl u_n, u_n-u\rangle \leq 0 ,$$
	then $u_n \rightarrow u$ in $\w$.\\
	Another useful property referred as $T$-monotonicity of $(-\Delta)_p^s$ is the following (see \cite[Proof of Lemma 3.2]{FI}):
	\begin{prop}\label{p2.1}
		Let $u,v \in \widetilde{W}^{s,p}(\O)$ such that $(u-v)^+ \in \w$ and satisfying $$\langle (-\Delta)_p^su-(-\Delta)_p^sv,(u-v)^+\rangle \leq 0    .    $$ Then, $u\leq v$ in $\O$. \qed
	\end{prop}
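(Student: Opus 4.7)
The plan is to test the hypothesis with $w := (u-v)^+ \in \w$ and exploit the integral representation
\begin{equation*}
\langle \pl u - \pl v, w\rangle = \int_{\R2n} \frac{\bigl[(u(x)-u(y))^{p-1}-(v(x)-v(y))^{p-1}\bigr]\,(w(x)-w(y))}{|x-y|^{N+ps}}\,dx\,dy.
\end{equation*}
The strategy is to show that the integrand is pointwise nonnegative, and strictly positive on a set of positive measure whenever $(u-v)^+ \not\equiv 0$. This reduces the proposition to a region-wise sign analysis, relying on the strict monotonicity of the map $t \mapsto t^{p-1} = |t|^{p-2}t$ on $\Rr$.

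Set $A := \{x \in \Rn : u(x) > v(x)\}$. Because $w \in \w$, up to null sets $A \subset \O$, with $w = u-v$ on $A$ and $w = 0$ on $A^c$; in particular $\Rn\setminus\O \subset A^c$. I decompose $\R2n$ into $A\times A$, $A^c\times A^c$, $A\times A^c$, and $A^c\times A$, and write $a := u(x)-u(y)$, $b := v(x)-v(y)$. On $A\times A$ the integrand reduces to $(a^{p-1}-b^{p-1})(a-b) \geq 0$; on $A^c\times A^c$ it vanishes. For $(x,y)\in A\times A^c$ we have $u(x)>v(x)$ and $u(y)\leq v(y)$, so $a \geq u(x)-v(y) > v(x)-v(y) = b$; strict monotonicity gives $a^{p-1} > b^{p-1}$, and since $w(x)-w(y) = u(x)-v(x) > 0$, the integrand is strictly positive. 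The region $A^c\times A$ is handled symmetrically.

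Combining pointwise nonnegativity with the hypothesis $\leq 0$ forces the integrand to vanish almost everywhere. On $A\times A^c$ this forces $u(x)-v(x) = u(y)-v(y)$ for a.e.\ $(x,y)$. If $|A|>0$, selecting a Lebesgue point $x_0 \in A$ with $u(x_0)-v(x_0)>0$ gives $u(y)-v(y) = u(x_0)-v(x_0) > 0$ for a.e.\ $y \in A^c$, contradicting the definition of $A^c$. Hence $|A|=0$ and $u\leq v$ in $\O$. The main obstacle is the mixed-region analysis: one must handle unrestricted signs of $a, b$ and still conclude $a > b \Rightarrow a^{p-1} > b^{p-1}$, and verify $|A^c| > 0$ whenever $|A|>0$ (automatic from $\Rn\setminus\O \subset A^c$), so that the strictly positive pointwise integrand contributes nontrivially to the integral.
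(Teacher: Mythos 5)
Your proof is correct. The paper gives no argument of its own for this proposition (it only cites the proof of Lemma 3.2 in the reference by Frassu and Iannizzotto), and the standard argument there is essentially the one you present: test with $(u-v)^+$, use the strict monotonicity of $t\mapsto |t|^{p-2}t$ to get pointwise nonnegativity of the integrand with strict positivity on $\{u>v\}\times\{u\le v\}$, and conclude from the hypothesis that $\{u>v\}$ is null.
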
 
	We will also need a strong maximum principle to ensure positivity of weak solutions. Precisely,
	\begin{prop}\label{p2.2}
		Let $g \in C^0(\Rr)\cap BV_{loc}(\Rr)$, $c:\O\rightarrow \Rr$ is a Lebesgue measurable function that satisfies $M_1 \leq c(x)\leq M_2$ a.e. $x \in \O$ for some $M_1,M_2 >0$, and  $ u \in \w \cap C^0(\overline{\O})$, $ u \not \equiv 0$ such that 
		\begin{equation*}
			\pl u +c(x)g(u)\geq c(x)g(0)~\text{weakly in }~\O,
			u \geq 0~\text{in}~\O.
		\end{equation*}
		Then, $$\inf_\O \frac{u}{d^s}>0.$$
	\end{prop}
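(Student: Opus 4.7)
The plan is to reduce the hypothesis to a weak inequality with bounded right-hand side, establish interior strict positivity via a strong minimum principle, and finally derive the Hopf-type boundary decay via a barrier comparable to $d^s$, where $d(x) = \mathrm{dist}(x,\partial\Omega)$.

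First, since $u \in C^0(\overline{\O})$ is bounded, say $\|u\|_\infty \leq M$, and $g \in C^0(\Rr) \cap BV_{\mathrm{loc}}(\Rr)$ is bounded on $[-M,M]$, the assumption $M_1 \leq c \leq M_2$ allows us to absorb the lower-order perturbation: $|c(x)(g(0) - g(u(x)))| \leq K$ for some $K > 0$, so the hypothesis reduces to
\begin{equation*}
    \pl u \geq -K \quad \text{weakly in } \O.
\end{equation*}
I would then establish $u > 0$ in $\O$. Since $u$ is continuous and not identically zero, there exist a ball $B_\rho(x_0) \Subset \O$ and $\eta > 0$ with $u \geq \eta$ on $B_\rho(x_0)$. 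A standard strong minimum principle for $\pl$ with bounded source (cf. \cite{IMS}) then yields $u > 0$ throughout $\O$; intuitively, the nonlocality of $\pl$ forces any would-be vanishing point of $u$ to receive a strictly positive contribution from the integration over $B_\rho(x_0)$, contradicting the weak inequality.

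For the Hopf-type boundary bound, I would compare $u$ from below with a small multiple of the torsion-type function $\phi \in \w \cap C^0(\overline{\O})$, the unique positive weak solution of $\pl\phi = 1$ in $\O$, $\phi = 0$ in $\Rn\setminus\O$. By the fine boundary regularity of \cite{IMS}, $\phi \asymp d^s$ in $\O$. The interior positivity from the previous step yields $u \geq \alpha_\varepsilon > 0$ on each compact slab $\{d \geq \varepsilon\}$, so $\delta\phi \leq u$ there for sufficiently small $\delta > 0$; extending this inequality to all of $\O$ by the $T$-monotonicity of Proposition \ref{p2.1} would give
\begin{equation*}
    \inf_\O \frac{u}{d^s} \geq \delta \inf_\O \frac{\phi}{d^s} > 0.
\end{equation*}

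The main obstacle is precisely this comparison step: since $\pl(\delta\phi) = \delta^{p-1} > 0 > -K$ pointwise, a naive global application of Proposition \ref{p2.1} fails. The workaround is to localize the comparison to a thin boundary strip $\{d < \varepsilon\}$, using a truncated barrier that agrees with $u$ (or vanishes) outside the strip, so that the test function $(\delta\phi - u)^+ \in \w$ is supported precisely where the modified barrier is genuinely a subsolution of the reduced inequality; arranging the matching condition on $\{d = \varepsilon\}$ and keeping the non-local tail contributions under control through the interior lower bound is the heart of the argument.
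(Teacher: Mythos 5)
The paper does not write out a proof; it invokes \cite[Theorem 2.6]{IMP} verbatim, so the relevant comparison is with that argument. Your proposal has a genuine gap at its very first step. Replacing the hypothesis by $\pl u\geq -K$ weakly in $\O$ discards exactly the structure that makes the statement true: there is no ``strong minimum principle for $\pl$ with bounded source.'' A compactly supported continuous bump $u\geq 0$ with $\mathrm{supp}\,u\Subset B_{1/2}\Subset\O$ satisfies $\pl u(x)=-2\int u(y)^{p-1}|x-y|^{-N-ps}\,dy\geq -K$ for all $x$ in the region where $u=0$, hence $\pl u\geq -K$ weakly in $\O$, yet $u$ vanishes on an open subset of $\O$ and $\inf_\O u/d^s=0$. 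What saves Proposition \ref{p2.2} is that the admissible right-hand side is $c(x)\bigl(g(0)-g(u)\bigr)$, which \emph{degenerates where $u$ does}: at a touching point of the zero set one has $g(0)-g(u)\to 0$ by continuity of $g$, while the nonlocal tail of $\pl u$ stays strictly negative, and this is the contradiction that \cite[Theorem 2.6]{IMP} actually runs (via a logarithmic/barrier estimate quantifying how fast $g(0)-g(t)$ may grow, which is where $g\in BV_{\mathrm{loc}}$ enters). Your reduction makes the interior positivity claim unprovable, and the $BV_{\mathrm{loc}}$ hypothesis, which you never use, should have been a warning sign.

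The boundary step is also not closed. You correctly observe that the torsion function cannot be compared globally, but the obstruction is worse than you state: in the intended application $g$ is nondecreasing with $g(0)=0$ (the Choquard absorption term), so $c(x)(g(0)-g(u))\leq 0$, and any admissible lower barrier $w$ must satisfy $\langle \pl w,\psi\rangle\leq \int_\O c(x)(g(0)-g(u))\psi\,dx\leq 0$ on the contact set; a function with $\pl w=\delta^{p-1}>0$ can never satisfy this, no matter how the truncation to a boundary strip is arranged. One needs a family of barriers $w_\sigma\asymp d^s$ with $\pl w_\sigma\leq-\sigma$ for small $\sigma>0$ (built on annuli or strips, as in \cite{IMS} and \cite{IMP}), together with the smallness of $g(0)-g(u)$ on the strip to choose $\sigma$ compatibly; this is precisely the part you defer as ``the heart of the argument'' without carrying it out. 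As written, the proposal neither establishes interior positivity nor the Hopf-type rate, so it does not constitute a proof.
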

	\begin{proof}
		Proof follows similarly as the proof of the Theorem $2.6$ in \cite{IMP}.
	\end{proof}
	
	Regarding the spectral properties of the fractional p-Laplacian, we refer the reader to \cite{LL}.
	We just recall that the eigenvalue problem which is stated as
	\begin{equation}\label{evp}
		\pl u = \l u^{p-1} ~\text{in}~ \O, 
		u=0 ~\text{in} ~ \Rn\setminus\O,
	\end{equation}
	admits a principal eigenvalue defined as
	\begin{equation}\label{ev}
		\hat{\l}_1= \inf\limits_{u \in \w \setminus \{0\}} \frac{\|u\|^p}{|u|_p^p}.
	\end{equation}
	Furthermore, $\hat{\l}_1$ is isolated and simple, i.e.,  there exists a unique positive eigenfunction $\hat{u}_1 \in \text{int}(C_s^0(\overline{\O})_+) $ s.t.
	$|\hat{u}_1|_p = 1.$\\
	The crucial result to handle the nonlocal Choquard type of nonlinearity is the following well-known
	Hardy-Littlewood-Sobolev inequality.
	\begin{prop}\label{hls}
		\cite{hls}\textbf{Hardy-Littlewood-Sobolev inequality} Let $m, n>1$ and $0<\a<N$ with $1/m+1/n+\a/N=2$, $g\in L^{m}(\Rn), h\in L^n(\Rn)$. Then, there exist a sharp constant $C(m,n,N,\a)$ independent of $g$ and $h$ such  that 
		\begin{equation}\label{hlse}
			\int\limits_{\Rn}\int\limits_{\Rn}\frac{g(x)h(y)}{|x-y|^{\a}}dxdy \leq C(m,n,N,\a) |g|_m|h|_n.
		\end{equation}                             \qed
	\end{prop}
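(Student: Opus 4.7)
The plan is to reduce Proposition \ref{hls} to a norm estimate for the Riesz potential and then invoke Marcinkiewicz interpolation. Writing $1/n' = 1 - 1/n$ and using duality, the stated bilinear inequality is equivalent to showing that the operator
\[ Tg(x) := \int\limits_{\Rn} \frac{g(y)}{|x-y|^\a} \, dy \]
maps $L^m(\Rn)$ continuously into $L^{n'}(\Rn)$, where the condition $1/m + 1/n + \a/N = 2$ rewrites as $1/n' = 1/m + \a/N - 1$. Once this is in place, H\"older's inequality yields
\[ \int\limits_{\Rn}\int\limits_{\Rn} \frac{g(x) h(y)}{|x-y|^\a} \, dx \, dy = \int\limits_{\Rn}(Tg)(x) h(x) \, dx \leq |Tg|_{n'} |h|_n, \]
and the claimed bound follows.

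The first step is to establish a weak-type $(m, n')$ estimate for $T$. For non-negative $g \in L^m(\Rn)$ and any $R > 0$, I would split the kernel as $|x-y|^{-\a} \mathbf{1}_{|x-y| \leq R} + |x-y|^{-\a} \mathbf{1}_{|x-y| > R}$. Grouping the near part by dyadic annuli controls it by $C R^{N-\a} Mg(x)$, where $M$ denotes the Hardy--Littlewood maximal operator, while H\"older's inequality applied to the far part yields the bound $C R^{-N/n'} |g|_m$ (using that $\a m' > N$ precisely when $1/n' > 0$). Choosing $R$ so that the two contributions balance at the threshold $\lambda/2$ and then invoking the weak-type $(m, m)$ estimate for $M$ produces the distributional inequality
\[ |\{x \in \Rn : Tg(x) > \lambda\}| \leq C \, (|g|_m/\lambda)^{n'}. \]

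The second step is to upgrade this into a strong-type bound. The weak-type estimate above is available for a range of exponents $m_0 \neq m_1$ close to $m$, with the corresponding $n_0', n_1'$ lying on the affine line $1/n' = 1/m + \a/N - 1$; Marcinkiewicz interpolation between these two endpoints then delivers the strong-type inequality $|Tg|_{n'} \leq C(m, n, N, \a) |g|_m$, completing the proof.

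The main obstacle lies in the first step: the kernel $|x-y|^{-\a}$ is both singular at the diagonal and slowly decaying at infinity, so the truncation radius $R$ must be optimized as a function of $x$ and $\lambda$ in order to extract the singular contribution via the maximal function and the tail contribution via H\"older simultaneously. Obtaining the \emph{sharp} constant in \eqref{hlse} is significantly more delicate (and not needed here): Lieb's proof combines Riesz symmetric decreasing rearrangement with the conformal symmetries of $S^N$ via stereographic projection, reducing the problem to an optimization on the sphere that is solved by a competing symmetries argument.
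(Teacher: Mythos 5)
Your proposal is correct, but it is worth noting that the paper does not prove this proposition at all: it is quoted verbatim from Lieb--Loss \cite{hls} and used as a black box, so there is no in-paper argument to compare against. What you sketch is the standard harmonic-analysis proof of the non-sharp inequality, and the details check out: the duality reduction to the Riesz potential $T$, the dyadic-annuli bound $CR^{N-\a}Mg(x)$ for the near part (valid since $\a<N$), the H\"older bound $CR^{-N/n'}|g|_m$ for the tail (valid since $\a m'>N$, equivalent to $1/n'>0$), the balancing of the two at level $\l/2$ to get weak type $(m,n')$, and Marcinkiewicz interpolation along the affine line $1/n'=1/m+\a/N-1$. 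Two small remarks. First, in the weak-type version the truncation radius $R$ depends only on $\l$ and $|g|_m$, not on $x$; if instead you optimize $R$ pointwise in $x$ you obtain Hedberg's inequality $Tg(x)\leq C\,(Mg(x))^{m/n'}|g|_m^{1-m/n'}$, which together with the strong $(m,m)$ bound for $M$ (here $m>1$) gives the strong-type estimate directly and lets you skip the interpolation step entirely. Second, the statement as printed asserts a \emph{sharp} constant; your argument (as you acknowledge) produces only some finite constant, with sharpness requiring Lieb's rearrangement and competing-symmetries machinery --- but nowhere in the paper is sharpness actually used, so your proof suffices for every application made of Proposition \ref{hls} here.
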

	In particular, let $g = h = |u|^r$ 
	then by Hardy-Littlewood-Sobolev inequality we get,
	$$                               \int\limits_{\Rn}\int\limits_{\Rn}\frac{|u(x)^r|u(y)|^r}{|x-y|^{\a}}dxdy$$
	is well defined if $|u|^r \in L^t(\Rn)$ with $t =\frac{2N}{2N-\alpha}>1$.
	Thus, from Sobolev embedding theorems, we must have
	$$  p_{s,\a}^\# \leq r \leq  p_{s,\a}^\ast    ,               $$ where $p_{s,\a}^\ast=\displaystyle \frac{p(N-\a/2)}{N-ps}$.
	From this, for $u \in \w$ we have
	$$    \left(\int\limits_{\Rn}\int\limits_{\Rn}\frac{|u(x)|^r|u(y)|^r}{|x-y|^\a}dxdy \right)^\frac{1}{r} \leq C(r,n,\a)^\frac{1}{r}  |u|_{tr}^2 ,                                          $$
	where  $C(r,N,\a)$ is suitable constant and
	$p_{s,\a}^\# \leq r \leq p_{s,\a}^\ast$.\\
	The following result naturally follows.
	\begin{lemma} Let 
		$$[u]_r^{2r}= \int\limits_{\O}\int\limits_{\O} \frac{|u(y)|^r |u(x)|^r}{|x-y|^\a}dxdy ,$$ where $p_{s,\a}^\#\leq r \leq p_{s,\a}^\ast.$
		Then $[\cdot]_r$ defines a norm on $X_r=\{u:\O \rightarrow \Rr,[u]_r< \infty  \}$. Also $(X_r,[\cdot]_r)$ is complete.
	\end{lemma}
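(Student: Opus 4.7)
The plan is to verify the norm axioms for $[\cdot]_r$ and then prove completeness via an $L^r$-comparison plus Fatou argument. Nonnegativity, positive homogeneity, and definiteness are immediate: the kernel $|x-y|^{-\a}>0$ off the diagonal, so $[u]_r=0$ forces $|u(x)|^r|u(y)|^r=0$ a.e.\ and hence $u\equiv 0$. The only delicate step is the triangle inequality.

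To establish the triangle inequality (for $r\ge 1$, which is the range of interest), I would introduce the symmetric bilinear form $B(f,g):=\int_\O\int_\O f(x)g(y)|x-y|^{-\a}\,dxdy$ and observe that, because its kernel is positive definite on $\Rn$ (its Fourier transform is a positive multiple of $|\xi|^{-(N-\a)}$), the quantity $\|f\|_B:=B(f,f)^{1/2}$ is a genuine norm on $\{f:\|f\|_B<\infty\}$; moreover, since the kernel is pointwise positive, $\|\cdot\|_B$ is monotone on nonnegative functions. Noting $[u]_r=\||u|^r\|_B^{1/r}$ and $|u+v|^r\le(|u|+|v|)^r$, monotonicity reduces matters to the case $u,v\ge 0$. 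Setting $a=[u]_r$, $b=[v]_r$ (both positive; the degenerate case is trivial), $\theta=a/(a+b)$, $\tilde u=u/a$, $\tilde v=v/b$, the convexity of $t\mapsto t^r$ gives the pointwise bound
\[
(\theta\tilde u+(1-\theta)\tilde v)^r\le \theta\,\tilde u^{r}+(1-\theta)\,\tilde v^{r}.
\]
Applying $\|\cdot\|_B$ and using its monotonicity and triangle inequality yields $\|(\theta\tilde u+(1-\theta)\tilde v)^r\|_B\le 1$, and the positive homogeneity of $[\cdot]_r$ then upgrades this to $[u+v]_r\le a+b=[u]_r+[v]_r$.

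For completeness, the key is to couple $[\cdot]_r$ with $|\cdot|_r$ using the boundedness of $\O$. Since $|x-y|^{-\a}\ge(\mathrm{diam}\,\O)^{-\a}$, one gets the lower bound $[w]_r\ge(\mathrm{diam}\,\O)^{-\a/(2r)}|w|_r$, so a Cauchy sequence $(u_n)\subset X_r$ is Cauchy in $L^r(\O)$. Passing to a subsequence, $u_{n_k}\to u$ a.e.\ in $\O$ for some $u\in L^r(\O)$. Fatou applied to the defining double integral yields $[u]_r\le\liminf_k[u_{n_k}]_r<\infty$ (the right side is finite since Cauchy sequences are bounded), so $u\in X_r$; a second Fatou, after fixing $n$, yields $[u_n-u]_r\le\liminf_k[u_n-u_{n_k}]_r$, which is $\le\varepsilon$ for all sufficiently large $n$ by the Cauchy hypothesis.

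The main obstacle is the triangle inequality: Cauchy--Schwarz applied to $B$ immediately delivers the triangle inequality for $\|\cdot\|_B$, but the outer $1/r$-th power does not respect addition in any naive way. The pointwise convexity step is precisely what bridges the gap between $\|\cdot\|_B$ and $[\cdot]_r$; the rest of the argument is essentially bookkeeping.
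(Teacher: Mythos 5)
Your proof is correct and follows essentially the same route as the argument the paper delegates to Mukherjee--Sreenadh \cite[Lemma 3.4]{MS}: positive-definiteness of the Riesz kernel together with convexity of $t\mapsto t^{r}$ for the triangle inequality, and the pointwise lower bound $|x-y|^{-\a}\ge(\operatorname{diam}\O)^{-\a}$ combined with Fatou for completeness. The only caveat is that your triangle-inequality step genuinely needs $r\ge 1$, while the lemma's stated range $p_{s,\a}^\#\le r$ formally admits $r\in\left(\tfrac{2N-\a}{2N},1\right)$; this is harmless here since every application in the paper has $r>1$, but it is worth flagging the restriction explicitly.
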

	\begin{proof} The proof is similar to the proof of \cite [Lemma 3.4]{MS}.
	\end{proof}
	Now, we define the notion of a weak supersolution, weak subsolution and a weak solution for the problem $\p$:
	\begin{defi}	
		Let $u \in\w$. Then
		\begin{itemize}
			\item[(a)]   $ u $ is a weak supersolution of $\p$, if $u\geq 0$ in $\Rn\setminus\O $ and for all $v \in \w_+$ 
			$$\langle \pl u ,v \rangle \geq
			\l \int\limits_{\O}f(x,u)vdx - \int\limits_{\O}\int\limits_{\O}\frac{|u(y)|^r|u(x)|^{r-2}u(x)v(x)}{|x-y|^\a}dxdy.$$
			\item[(b)]$ u $ is a weak subsolution of $\p$, if $u\leq 0$ in $\Rn\setminus\O $ and for all $v \in \w_+$ 
			$$\langle \pl u ,v \rangle \leq
			\l \int\limits_{\O}f(x,u)vdx - \int\limits_{\O}\int\limits_{\O}\frac{|u(y)|^r|u(x)|^{r-2}u(x)v(x)}{|x-y|^\a}dxdy.$$
		\end{itemize}
		We say that $u \in \w$ is a weak solution of $\p$ if it is both supersolution and subsolution of $\p$.
	\end{defi}		
	We adapt the above definition for the singular case as follows:
	\begin{defi}
		If $(f_5)$ holds and if $-q<\frac{p^2s}{sp-1}-(p-1)$, then $u \in \w$ is a weak solution of $\p$ if for any compact set $K\subset\Omega$, there exists $c_K>0$ such that $u>c_K$ in $K$ and if
		\begin{equation}\label{dws}
			\langle \pl u ,v \rangle =
			\l \int\limits_{\O}f(x,u)vdx - \int\limits_{\O}\int\limits_{\O}\frac{|u(y)|^r|u(x)|^{r-2}u(x)v(x)}{|x-y|^\a}dxdy,\end{equation}
		for all $ v \in C^\infty_c(\Omega)$.
	\end{defi}	
	\begin{remark}
		Note that if $f$ satisfies $(f_5)$ with $-q<\frac{p^2s}{sp-1}-(p-1)$, then by density arguments \eqref{dws} holds for any $v\in \w$ (see Corollary 1.2 in \cite{AGW}).
	\end{remark}
	The associated energy functional to the problem $\p$ is defined as
	\begin{equation}
		J(u)=\frac{\|u\|^p}{p}-\int\limits_\O F(x,u)dx +\frac{1}{2r}[u]_r^{2r},
	\end{equation} where $F(x,t)=\int\limits_\O f(x,\tau)d\tau.$
	Clearly if $(f_1)$ holds,  $J \in C^1(\w,\Rr)$ with the derivative given by 
	\begin{equation}
		\langle J^\prime(u),v\rangle=\langle \pl u, v\rangle -\int\limits_\O f(x,u)vdx+\int\limits_\O\int\limits_\O\frac{|u(y)|^ru(x)^{r-1}}{|x-y|^\a}dxdy.
	\end{equation}
	Clearly $u$ is a weak solution of $\p$ iff $ u$ is a critical point of $J$.\\
	Next, we define the notions of uniform subsolution and strong supersolution of problem $\p$ that are required for the existence of weak solutions.
	\begin{defi}
		We say that a function $\overline{u} \in \w$ is a strong supersolution of $\p$, if $\overline{u} \geq 0$ in $\Rn\setminus\O$ and 
		$$ \langle \pl \overline{u}, v\rangle \geq \int\limits_\O f(x,\overline{u}) vdx,$$
		for all $v \in \w_+$.
	\end{defi}
	
	\begin{defi}
		We say that a function $\underline{u} \in \w$ is a uniform subsolution relative to a strong supersolution $\bar{u}$ of $\p$, if $\underline{u} \leq 0$ in $\Rn\setminus\O$, $\underline{u}\leq \overline{u}$ and $$\langle \pl \underline{u} ,v \rangle \leq
		\l \int\limits_{\O}f(x,\underline{u})vdx - \int\limits_{\O}\int\limits_{\O}\frac{|\bar{u}(y)|^r|\underline{u}(x)|^{r-2}\underline{u}(x)v(x)}{|x-y|^\a}dxdy,$$	for all $v \in \w_+$.
	\end{defi}
	Now, we discuss the regularity of nonnegative weak solutions of the problem $\p$. In this matter, we first state a useful technical result:
	\begin{lemma}\cite[Lemma A.1]{BP}\label{3}
		Let $1<p<\infty$ and $f:\Rr \rightarrow \Rr$ be a convex function. Then
		$$|a-b|^{p-2}(a-b)[A|f^{\prime}(a)|^{p-2}f^{\prime}(a)-B|f^{\prime}(b)|^{p-2}f^{\prime}(b)]$$
		$$\geq|f(a)-f(b)|^{p-2}(f(a)-f(b))(A-B). $$
		for every $a,b \in \Rr$ and every $A,B\geq 0$.
	\end{lemma}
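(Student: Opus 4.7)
The plan is to reformulate the inequality via the function $\phi(t)=|t|^{p-2}t$, which is odd, strictly increasing on $\Rr$, and multiplicative in the sense that $\phi(xy)=\phi(x)\phi(y)$ (this last identity follows from $|x|^{p-2}|y|^{p-2}=|xy|^{p-2}$). Using this multiplicativity, the left-hand side of the claimed inequality rewrites as $A\phi\bigl((a-b)f'(a)\bigr)-B\phi\bigl((a-b)f'(b)\bigr)$, and the right-hand side rewrites as $(A-B)\phi\bigl(f(a)-f(b)\bigr)$. Setting
\begin{equation*}
u=(a-b)f'(a),\qquad v=(a-b)f'(b),\qquad w=f(a)-f(b),
\end{equation*}
the target becomes $A\phi(u)-B\phi(v)\ge (A-B)\phi(w)$.

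Next, observe that the original claim is invariant under the simultaneous swap $(a,A)\leftrightarrow(b,B)$: the factor $\phi(a-b)$ changes sign, but so does the bracket $A\phi(f'(a))-B\phi(f'(b))$, and the same cancellation occurs on the right. So I may assume $a\ge b$ without loss of generality; the case $a=b$ is trivial since then $u=v=w=0$. For $a>b$, convexity of $f$ gives that the chord slope $\tfrac{f(a)-f(b)}{a-b}$ lies between $f'(b)$ and $f'(a)$, which—after multiplying by $a-b>0$—yields exactly the sandwich $v\le w\le u$.

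Once this sandwich is secured, the rest is purely algebraic. Rewrite the target as
\begin{equation*}
A\bigl(\phi(u)-\phi(w)\bigr)+B\bigl(\phi(w)-\phi(v)\bigr)\;\ge\;0,
\end{equation*}
and conclude from $A,B\ge 0$ together with the monotonicity of $\phi$, which forces $\phi(u)\ge\phi(w)\ge\phi(v)$.

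The only step that requires any real care is the initial rewriting, since one must verify the multiplicativity identity for $\phi$ without assuming anything about the sign of $a-b$, and check that the WLOG reduction genuinely does not touch the constants $A,B$ (so that the ordering between them is irrelevant). Beyond that, the proof is a short convexity-plus-monotonicity computation, with no functional-analytic input needed; consequently the statement holds uniformly for every $1<p<\infty$ and every convex $f$ for which $f'$ can be interpreted (the argument extends to non-smooth convex $f$ by replacing $f'(a),f'(b)$ with any selections from the respective subdifferentials).
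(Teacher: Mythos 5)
Your proof is correct; note that the paper itself gives no argument for this lemma and simply cites \cite[Lemma A.1]{BP}, whose proof is essentially the one you give (the convexity sandwich $f'(b)(a-b)\le f(a)-f(b)\le f'(a)(a-b)$ combined with the monotonicity of $t\mapsto|t|^{p-2}t$, then multiplying the two resulting inequalities by $A\ge 0$ and $-B\le 0$ and adding). Your reformulation via the multiplicativity of $\phi$ and the swap symmetry $(a,A)\leftrightarrow(b,B)$ is a clean packaging of the same idea, and your closing remark about subdifferentials correctly handles non-differentiable convex $f$.
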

	We then state the following regularity result:
	\begin{theorem}\label{ape}
		Suppose $f$ satisfies $(f_4)$ or $(f_5)$, then any nonnegative weak solution $u$ of $\p$ satisfy $u \in L^\infty(\O)\cap C^\mu(\overline{\O})$, for some $\mu \in (0,1)$.
	\end{theorem}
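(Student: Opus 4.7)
The strategy is to first establish $u \in L^\infty(\O)$, and then upgrade to H\"older regularity up to the boundary. The crucial observation is that $u \geq 0$ makes the Choquard reaction $\left(\int_\O |u(y)|^r |x-y|^{-\a}\,dy\right) |u(x)|^{r-2}u(x)$ pointwise nonnegative, so $u$ is a weak subsolution of $\pl u \leq f(x,u)$ in $\O$. This reduces the boundedness question to a known framework for the fractional $p$-Laplacian.

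Under $(f_4)$, $|f(x,u)| \leq C_0(1+|u|^{q-1})$ with $q \leq p_s^\ast$ is the standard (sub)critical Sobolev growth. I would run a Moser--De Giorgi iteration on the truncations $u_k = \min\{u,k\}$ with test functions of the form $u\, u_k^{p\beta}$, using Lemma \ref{3} to bound from below the Gagliardo seminorm of $u_k^{1+\beta}$ and iterating through the embedding $\w \hookrightarrow L^{p_s^\ast}(\O)$; in the critical case $q = p_s^\ast$, a Brezis--Kato tail-smallness argument is required to initiate the iteration, after which finitely many Moser steps suffice. Under $(f_5)$, $q<1$ gives $\lambda t^{q-1} \leq \lambda$ for $t \geq 1$, so testing $\pl u \leq \lambda u^{q-1}$ with $(u-M)^+$ for $M \geq 1$ and applying a De Giorgi truncation argument forces $u \in L^\infty(\O)$.

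Once $u \in L^\infty(\O)$, the Choquard potential is pointwise bounded by $|u|_\infty^{2r-1}\sup_{x\in\O}\int_\O |x-y|^{-\a}\,dy$, which is finite because $\a \in (0,N)$ and $\O$ is bounded. In case $(f_4)$, $|f(x,u)|$ is also bounded, so the whole right-hand side of $\p$ lies in $L^\infty(\O)$, and the boundary H\"older regularity of \cite{IMS} gives $u \in C^\mu(\overline{\O})$. In case $(f_5)$ the reaction $\lambda u^{q-1}$ develops an integrable boundary singularity of order $d(x)^{s(q-1)}$, and the algebraic restriction $-q < \frac{p^2 s}{ps-1}-(p-1)$ is exactly the threshold under which the regularity theory of \cite{AGW} upgrades the solution to $C^\mu(\overline{\O})$. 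The main obstacles I anticipate are the critical case $q = p_s^\ast$ under $(f_4)$, where Moser iteration does not close on a single step and a smallness step must be extracted first, and the singular case $(f_5)$, where the boundary blow-up of $f$ must be reconciled with the Hopf-type lower bound $u \gtrsim d^s$ in order to invoke \cite{AGW}; the Choquard term itself is benign, thanks to its favorable sign in the $L^\infty$ stage and its uniform boundedness once $u \in L^\infty(\O)$.
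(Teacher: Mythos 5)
Your proposal is correct and follows essentially the same route as the paper: first obtain $u\in L^\infty(\O)$ from a subsolution-type inequality (the paper derives it via a Kato-type argument using Lemma \ref{3} and then cites \cite[Theorem 2.12]{BT}, resp.\ \cite[Theorem 6.4]{MS@} in the singular case, for the iteration you sketch), then note that the right-hand side is bounded (resp.\ controlled near $\partial\O$ together with the two-sided barrier $C_1d^\delta\le u\le C_2d^\beta$) and invoke the global H\"older theory of \cite{MPSY} (resp.\ \cite{AGW}). Your remark that the Choquard term can simply be discarded by sign for nonnegative solutions is a mild simplification of the paper's corresponding step, which instead keeps that term (with reversed sign) and appeals to a Choquard-specific $L^\infty$ result.
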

	\begin{proof}
		We divide the proof in two cases:
		\subsection*{Case 1} Suppose $f$ satisfies $(f_4)$. Since $u$ is a weak solution of $(P_\l)$, it satisfies weakly in $\O$
		\begin{equation}\label{e2.7}
			\pl u = f(x,u)-\cq.
		\end{equation}
		We set $\phi=\xi|h^{\prime}_\epsilon(u)|^{p-2}h^{\prime}_\epsilon(u)$, where $\xi \in C_c^\infty(\O)$, $\xi>0$ and for $0<\epsilon <<1$, $$h_\epsilon(t):=(\epsilon^2+ t^2)^\frac{1}{2} .   $$
		Testing \eqref{e2.7} with $\phi$ and in addition by choosing 
		$$ a=u(x), b=u(y), A=\xi(x), ~\text{and}~B=\xi(y) ,$$ in Lemma \ref{3}, we get		
		$$\int\limits_Q\frac{|h_\epsilon(u(x))-h_\epsilon(u(y))|^{p-2}(h_\epsilon(u(x))-h_\epsilon(u(y)))(\xi(x)-\xi(y))}{|x-y|^{N+ps}}dydx $$
		\begin{align}\nonumber\label{3.2}
			\leq& \l\int\limits_\O f(x,u)|h^{\prime}_\epsilon(u(x))|^{p-1}\xi(x)dx-\int\limits_\O\int\limits_\O\frac{|u(y)|^r|u(x)|^{r-1}|h^{\prime}_\epsilon(u(x))|^{p-1}\xi(x)}{|x-y|^\a}dydx \\
			\leq& \l\int\limits_\O f(x,u)\xi(x)dx+\int\limits_\O\int\limits_\O\frac{|u(y)|^r|u(x)|^{r-1}\xi(x)}{|x-y|^\a}dydx. \end{align}
		The	inequality \eqref{3.2} holds since $u$ is nonnegative and $h^{\prime}_\epsilon\leq 1$. Since $h_\epsilon(t)$ converges to $h(t)=|t|$ as $\epsilon \rightarrow 0^+$, passing to the limit in \eqref{3.2} and using Fatou's lemma
		$$
		\int\limits_{\O}\int\limits_{\O} \frac{||u(x)|-|u(y)||^{p-2}(|u(x)|-|u(y)|)(\xi(x)-\xi(y))}{|x-y|^{N+ps}}dydx$$
		\begin{align}\label{3.3}
			\leq\int\limits_\O f(x,u)\xi(x)dx
			+\int\limits_\O\int\limits_\O\frac{|u(y)|^r|u(x)|^{r-1}\xi(x)}{|x-y|^\a}dydx,
		\end{align}
		for every $0\leq \xi \in C_c^\infty(\O).$ By density, \eqref{3.3} holds for $0\leq \xi \in \w_+$. Now, following \cite[Theorem 2.12]{BT}, we see that $u \in L^\infty(\O)$. Since $u \in L^\infty(\O)$ and $0<\a<N$, we observe that $$f(x,u)-\int\limits_\O\frac{|u(y)|^ru(x)^{r-1}}{|x-y|^\a}dy \in L^\infty(\O),$$ and so by \cite[Theorem 1.1]{MPSY}, $u \in C^\mu(\overline{\O})$ for some $\mu \in (0,s]$.
		\subsection*{Case 2} Suppose $f$ satisfies $(f_5)$. Similarly as in Case $1$, we get
		\begin{equation*}
			\int\limits_{\O}\int\limits_{\O} \frac{||u(x)|-|u(y)||^{p-2}(|u(x)|-|u(y)|)(\xi(x)-\xi(y))}{|x-y|^{N+ps}}dydx\leq \l \int\limits_\O u^{q-1}\xi(x)dx.
		\end{equation*}
		By following \cite[Theorem 6.4]{MS@}, we see that $u \in L^\infty(\O)$ and hence we can find $C_1,C_2>0$ such that $C_1d^\delta \leq u\leq C_2d^\beta$ (see Section \ref{SC} for proof and values of $\delta$ and $\beta$). Now following \cite[Theorem 1.4]{AGW}, we conclude that $u \in C^\mu(\overline{\O})$ for some $0<\mu<1$.
	\end{proof}
	The energy functional associated to the problem $(P_\l)$ is given by $$J_\l=\frac{\|u\|^p}{p}-\l\frac{|u|_q^q}{q}+\frac{[u]_r^{2r}}{2r}.$$
	We define
	$$N_\l = \{u \in \w \setminus \{0\} : \langle J_\l^{\prime}(u),u\rangle =0 \}.$$ Clearly, $N_\l$ contains all the nontrivial solutions of $(P_\l)$. Let us denote  $u^-(x)=\min\{u(x),0\}.$ Note that we are abusing the notation $u^-$ here. We will use this definition of $u^-$ only in Section \ref{SCS}, else it will denote the negative part of $u$. The sign-changing solutions of $(P_\l)$ may stay on the following set called the Nehari nodal set:
	$$M_\l =\{u \in \w: u^\pm \ne 0, \langle J_\l^{\prime}(u),u^\pm\rangle=0\}.$$
	Set
	\begin{equation}\label{e2.10}
		m_\l =\inf\limits_{u \in N_\l}J_\l(u),\end{equation}
	and 
	\begin{equation}\label{e2.11}
		c_\l =\inf\limits_{u \in N_\l}J_\l(u).
	\end{equation}
	Next,we give the statements of the main results that we obtain in this paper. The first one states the existence of a solution between an ordered  subsolution and supersolution pair:
	\begin{theorem}\label{t3.1}
		Suppose $f$ satisfies $(f_1)$ and let $\overline{u}$ be a strong supersolution and $\underline{u}$ be a uniform subsolution of $\p$ relative to $\overline{u}$ that satisfies $0\leq\underline{u}\leq \overline{u}$ a.e. in $\O$. Then there exist a nonnegative solution $u \in \w$ of $\p$ for any $r \in (1,\infty)$.
	\end{theorem}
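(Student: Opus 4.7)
The plan is to realize $u$ as a global minimizer of a suitably truncated energy functional and then to use the $T$-monotonicity of $\pl$ (Proposition \ref{p2.1}) together with the specific Choquard structure encoded in the definitions of strong supersolution and uniform subsolution to squeeze the minimizer between $\underline{u}$ and $\overline{u}$; once both barriers are active, the truncation disappears and the Euler--Lagrange identity becomes exactly the weak formulation of $\p$.

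\textbf{Step 1 (truncated minimization).} Set $T(x,t):=\max\{\underline{u}(x),\min\{t,\overline{u}(x)\}\}$, $\tilde f(x,t):=f(x,T(x,t))$ for $t\geq 0$, $\tilde f(x,t):=0$ for $t<0$, and $\tilde F(x,t):=\int_0^t\tilde f(x,s)\,ds$. Since $T(x,t)\in[\underline{u}(x),\overline{u}(x)]$, $\tilde F(x,u)$ grows at most linearly in $u$ with an integrable $x$-coefficient supplied by $f(\cdot,\overline{u}), f(\cdot,\underline{u})$ (whose integrability is already forced by the sub/supersolution inequalities making sense against test functions in $\w$). The functional
$$\hat J(u):=\frac{1}{p}\|u\|^p+\frac{1}{2r}[u]_r^{2r}-\int_\O\tilde F(x,u)\,dx,\qquad u\in\w,$$
is coercive (since $p>1$ and $\tilde F$ is sublinear) and sequentially weakly lower semicontinuous (the convex terms directly; the reaction via the compact embedding $\w\hookrightarrow L^p(\O)$ together with dominated convergence). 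Hence $\hat J$ attains a global minimum at some $u_0\in\w$. Because $\tilde F(x,t)=0$ for $t\leq 0$ while $\|u^+\|\leq\|u\|$ and $[u^+]_r\leq [u]_r$, one checks $\hat J(u_0^+)\leq\hat J(u_0)$, so we may assume $u_0=u_0^+\geq 0$. Writing $K[w](x):=\int_\O|w(y)|^r|x-y|^{-\a}\,dy$, the Euler--Lagrange identity reads
$$\langle\pl u_0,v\rangle+\int_\O K[u_0](x)\,u_0(x)^{r-1}v(x)\,dx=\int_\O\tilde f(x,u_0)\,v\,dx,\qquad\forall\, v\in\w.$$

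\textbf{Step 2 (trapping between the barriers).} For the upper bound, test with $v=(u_0-\overline{u})^+\in\w_+$ (admissible since $u_0=\overline{u}=0$ in $\Rn\setminus\O$) and subtract the strong supersolution inequality for $\overline{u}$ against the same $v$. On $\{u_0>\overline{u}\}$ one has $T(x,u_0)=\overline{u}$, so $\tilde f(x,u_0)=f(x,\overline{u})$ and the reaction parts cancel; the remaining Choquard contribution gives
$$\langle\pl u_0-\pl\overline{u},(u_0-\overline{u})^+\rangle\leq -\int_\O K[u_0](x)\,u_0(x)^{r-1}(u_0-\overline{u})^+\,dx\leq 0,$$
since $u_0>\overline{u}\geq 0$ and $K[u_0]\geq 0$ on the effective support. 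Proposition \ref{p2.1} yields $u_0\leq\overline{u}$ a.e.\ in $\O$. In particular $0\leq u_0\leq\overline{u}$ a.e., so $K[u_0]\leq K[\overline{u}]$ pointwise. For the lower bound, test with $v=(\underline{u}-u_0)^+\in\w_+$ and subtract the uniform subsolution inequality for $\underline{u}$ (whose Choquard kernel is frozen at $\overline{u}$ by design). On $\{\underline{u}>u_0\}$ again $T(x,u_0)=\underline{u}$ so the reaction parts cancel, leaving
$$\langle\pl\underline{u}-\pl u_0,(\underline{u}-u_0)^+\rangle\leq -\int_\O\bigl[K[\overline{u}]\,\underline{u}^{r-1}-K[u_0]\,u_0^{r-1}\bigr](\underline{u}-u_0)^+\,dx\leq 0,$$
because on $\{\underline{u}>u_0\geq 0\}$ both $K[\overline{u}]\geq K[u_0]$ and $\underline{u}^{r-1}\geq u_0^{r-1}$ hold, so the bracket is nonnegative. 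Proposition \ref{p2.1} yields $\underline{u}\leq u_0$ a.e.

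\textbf{Step 3 (conclusion and main obstacle).} With $0\leq\underline{u}\leq u_0\leq\overline{u}$ the truncation is invisible: $T(x,u_0)=u_0$ and $\tilde f(x,u_0)=f(x,u_0)$, so the identity from Step 1 coincides with the weak formulation of $\p$, and $u_0\geq\underline{u}\geq 0$ is the sought nonnegative solution. The principal difficulty is that the Choquard term is a genuinely nonlocal zeroth-order term, so the classical sub/supersolution comparison fails because the kernels $K[u_0]$ and $K[\overline{u}]$ on the two sides of any order-comparison of $u_0$ against $\underline{u}$ do not match. Two ingredients designed into the framework resolve this: the preliminary $u_0^+$ argument, which together with $u_0\leq\overline{u}$ yields $K[u_0]\leq K[\overline{u}]$; and the built-in freezing of the Choquard kernel at $\overline{u}$ in the definition of \emph{uniform} subsolution, which endows the Choquard difference with the correct sign on $\{\underline{u}>u_0\}$. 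Coupled with the $T$-monotonicity of $\pl$, these are exactly what close the estimate.
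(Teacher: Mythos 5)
Your strategy is the same as the paper's: minimize a truncated energy functional, then use the $T$-monotonicity of $\pl$ (Proposition \ref{p2.1}) together with the asymmetric structure of the two barriers (no Choquard term in the strong supersolution, Choquard kernel frozen at $\overline{u}$ in the uniform subsolution) to trap the minimizer, after which the truncation is inactive. Steps 2 and 3 are essentially identical to the paper's comparisons, and your preliminary replacement of $u_0$ by $u_0^+$ is a harmless variant of the paper's route to nonnegativity (which instead follows from $\underline{u}\leq u_0$ and $\underline{u}\geq 0$).

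There is, however, one genuine gap: you truncate only the reaction $f$, not the Choquard term, and keep $\frac{1}{2r}[u]_r^{2r}$ in the functional. The theorem is claimed for \emph{every} $r\in(1,\infty)$, whereas the Hardy--Littlewood--Sobolev inequality combined with the Sobolev embedding controls $[u]_r^{2r}$ for $u\in\w$ only when $r\leq p_{s,\a}^\ast$. For $r>p_{s,\a}^\ast$ the term $[u]_r^{2r}$ can be infinite on $\w$, so your $\hat J$ is neither finite-valued nor G\^{a}teaux differentiable there, and the Euler--Lagrange identity you write in Step 1 --- on which both comparisons in Step 2 depend --- is not justified (one would at least need to know that $\int_\O\int_\O |u_0(y)|^r|u_0(x)|^{r-1}|v(x)|\,|x-y|^{-\a}\,dy\,dx<\infty$ for all $v\in\w$, which is not available before the $L^\infty$ bound on $u_0$ is established). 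This is precisely why the paper introduces the cut-off $T_M$ with $M>|\overline{u}|_\infty$ and replaces the Choquard energy by $[T_M(u)]_r^{2r}/(2r)$: since $T_M(u)$ is bounded and supported in the bounded set $\O$, the truncated term is finite and $C^1$ for every $r$, and the cut-off disappears once $0\leq u_0\leq\overline{u}<M$ is proved (see also the paper's Remark following the proof, which notes that the cut-off can be dispensed with only in the range $p_{s,\a}^\#\leq r\leq p_{s,\a}^\ast$). Your argument is complete in that restricted range but does not cover the full claim; adding the $T_M$ truncation repairs it. A smaller point, shared with the paper: under $(f_1)$ alone the bound $|\tilde f(x,t)|\leq\sup_{s\in[\underline{u}(x),\overline{u}(x)]}|f(x,s)|$ is not literally ``supplied by $f(\cdot,\underline{u})$ and $f(\cdot,\overline{u})$'' without some monotonicity or growth hypothesis, so the coercivity/semicontinuity of the reaction part deserves a word more than you give it.
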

	The second one infers the uniqueness of the postive solution under suitable conditions on $f$:
	\begin{theorem}\label{t3.2} 	Let $f$ be a function that satisfies $(f_1)$ to $(f_4)$.
		Then $\p$ has atmost one positive solution.
	\end{theorem}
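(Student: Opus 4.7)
The plan is to adapt the Díaz-Saa / fractional Picone uniqueness scheme (as in Brasco-Franzina, also used in \cite{IMP}) so as to absorb the extra nonlocal Choquard reaction.

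Let $u_1, u_2 \in \w$ be two positive weak solutions of $\p$. By Theorem \ref{ape}, both belong to $L^\infty(\O) \cap C^\mu(\overline{\O})$; and since the Choquard coefficient $x \mapsto \int_\O u_i^r(y)|x-y|^{-\a}\,dy$ lies in $L^\infty(\O)$ (using $u_i \in L^\infty$ and $0 < \a < N$), Proposition \ref{p2.2} yields $u_i / d^s \geq c_0 > 0$ on $\overline{\O}$. This regularity ensures that the quotients
$$\phi_1 := \frac{u_1^p - u_2^p}{u_1^{p-1}}, \qquad \phi_2 := \frac{u_2^p - u_1^p}{u_2^{p-1}}$$
are bounded and lie in $\w$, hence are admissible test functions.

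Next I would invoke the fractional Picone / Díaz-Saa inequality for $\pl$,
$$\langle \pl u_1, \phi_1 \rangle + \langle \pl u_2, \phi_2 \rangle \geq 0,$$
with equality iff $u_1$ is proportional to $u_2$. Substituting the equations of $\p$ into the test relations and summing gives
$$0 \leq \int_\O \left[\frac{f(x,u_1)}{u_1^{p-1}} - \frac{f(x,u_2)}{u_2^{p-1}}\right](u_1^p - u_2^p)\,dx - \mathcal{C},$$
where
$$\mathcal{C} := \int_\O\int_\O \frac{[u_1^r(y) u_1^{r-p}(x) - u_2^r(y) u_2^{r-p}(x)]\,[u_1^p(x) - u_2^p(x)]}{|x-y|^\a}\,dx\,dy.$$
By $(f_3)$ the first integral on the right is non-positive, so everything reduces to showing $\mathcal{C} \geq 0$. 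Symmetrizing the integrand in $(x,y)$ and applying the Young inequality $a^{r-p}b^p \leq \tfrac{r-p}{r}a^r + \tfrac{p}{r}b^r$ to the two cross terms (case $r \geq p$), one arrives at
$$\mathcal{C} \geq \frac{p}{r}\int_\O\int_\O \frac{[u_1^r(x) - u_2^r(x)][u_1^r(y) - u_2^r(y)]}{|x-y|^\a}\,dx\,dy \geq 0,$$
the last inequality being the positive-definiteness of the Riesz kernel. Hence both sides must vanish, and using the strict monotonicity in $(f_3)$ together with the equality case in the Picone inequality, $u_1 = u_2$ a.e.\ in $\O$.

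The main technical obstacle is the lower bound $\mathcal{C} \geq 0$. The classical Díaz-Saa method is tailored to local reactions, and the nonlocal Choquard convolution requires the symmetrization-plus-Young step above; this implements the ``hidden convexity'' of the Choquard functional along the $u^p$-geodesic $\sigma \mapsto ((1-\sigma)u_1^p + \sigma u_2^p)^{1/p}$. In the regime $p_{s,\a}^\# \leq r < p$ Young's inequality reverses, and one needs a finer symmetrization (or a direct convexity argument in the variable $u^p$) to still obtain $\mathcal{C} \geq 0$; this is the delicate point that distinguishes this uniqueness result from the purely local one.
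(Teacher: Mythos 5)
Your proof is correct, and it rests on the same core ingredient as the paper's argument: the hidden convexity of the Gagliardo seminorm along the curve $t\mapsto((1-t)u_1^p+tu_2^p)^{1/p}$ (Brasco--Franzina) together with the two-sided barrier $C_1d^s\le u_i\le C_2d^s$ (obtained exactly as you do, via Theorem \ref{ape}, the $L^\infty$ bound on the Choquard coefficient and Proposition \ref{p2.2}) which makes $(u_1^p-u_2^p)/u_i^{p-1}$ an admissible test function. The packaging, however, is genuinely different. The paper works with the functional $\Phi(w)=J(w^{1/p})$ on the cone $V=\{w>0:\ w^{1/p}\in\w\}$, shows each piece $I_1,I_2,I_3$ is convex ($I_3$ strictly, using $p<r$), and gets a contradiction from the derivative of $t\mapsto\Phi((1-t)u_0^p+tu_1^p)$ at the two endpoints; in particular the convexity of the Choquard piece $I_3$ is asserted rather than computed. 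You instead run the differential (D\'iaz--Saa/Picone) version of the same idea, and your symmetrization-plus-Young step, yielding
$\mathcal{C}\ \ge\ \tfrac{p}{r}\int_\O\int_\O|x-y|^{-\a}\,(u_1^r-u_2^r)(x)\,(u_1^r-u_2^r)(y)\,dx\,dy\ \ge\ 0$
by positive definiteness of the Riesz kernel, is precisely an explicit proof of the convexity the paper only claims --- a genuine added value. Your route also compares two arbitrary positive solutions directly, whereas the paper's argument as written pivots on one of them being a global minimizer of $J$. Finally, both arguments require $r\ge p$ (the paper through ``$p<r$ implies $I_3$ strictly convex'', you through Young's inequality), so the caveat you raise for $p^{\#}_{s,\a}\le r<p$ applies equally to the published proof and is not a defect of yours; and the strictness needed to conclude $u_1=u_2$ can be drawn either from the strict decrease in $(f_3)$ or from strict positivity of the symmetrized Choquard form, exactly as you indicate.
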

	As for applications of above theorems we study the cases when $f(t)=\l t^{q-1}$ and $f(t)=\l(t^{p-1}+t^{q-1})$, where $1<q<p<r$. We have the following corollaries in this direction:
	\begin{Cor}\label{c3.1}
		Suppose $1<q<p<r$. Then for all $\l >0$ and $r \in (1,\infty)$, problem 
		\begin{equation}\label{p2.12}
			\pl u=\l u^{q-1}-\cq~\text{in}~\O, u=0~\text{in}~\Rn\setminus\O.
		\end{equation}
		admits a unique positive solution. Moreover, if $\l\rightarrow 0^+$, then $u_\l \rightarrow 0$ in $\w\cap C(\overline{\Omega})$.
	\end{Cor}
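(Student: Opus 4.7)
The plan is to apply Theorems \ref{t3.1} and \ref{t3.2} with $f(x,t)=\l t^{q-1}$. Since $q<p$, the quotient $f(x,t)/t^{p-1}=\l t^{q-p}$ is strictly decreasing on $(0,\infty)$, so $(f_3)$ holds, and $(f_1)$, $(f_2)$, $(f_4)$ are manifest (with growth exponent $q\le p_s^\ast$). Thus the work reduces to constructing a strong supersolution $\overline{u}_\l$ and a uniform subsolution $\underline{u}_\l\le\overline{u}_\l$ relative to it; existence and uniqueness will then follow from Theorems \ref{t3.1} and \ref{t3.2}.

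For the strong supersolution I would take $\overline{u}_\l:=M(\l)\,e$, where $e\in\w\cap L^\infty(\O)$ is the torsion function associated with $\pl$, i.e.\ the unique positive solution of $\pl e=1$ in $\O$ with $e=0$ in $\Rn\setminus\O$, and $M(\l):=\bigl(\l|e|_\infty^{q-1}\bigr)^{1/(p-q)}$. Then $\pl\overline{u}_\l=M^{p-1}$, and the choice of $M(\l)$ guarantees $M^{p-1}\ge\l(Me)^{q-1}=\l\overline{u}_\l^{\,q-1}$ a.e.\ in $\O$, which is the strong supersolution condition.

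For the uniform subsolution I would take $\underline{u}_\l:=\epsilon\hat u_1$ with $\epsilon>0$ to be fixed. The crucial feature of the definition is that the Choquard convolution is evaluated at $\overline{u}_\l$ rather than $\underline{u}_\l$; since $\overline{u}_\l\in L^\infty(\O)$ and $\a<N$, the quantity
\begin{equation*}
K_\l:=\sup_{x\in\O}\int_\O\frac{|\overline{u}_\l(y)|^r}{|x-y|^\a}\,dy
\end{equation*}
is finite. Using $\pl\underline{u}_\l=\epsilon^{p-1}\hat\l_1\hat u_1^{p-1}$, the pointwise subsolution inequality reduces to
\begin{equation*}
\epsilon^{p-q}\hat\l_1\hat u_1^{p-q}+K_\l\,\epsilon^{r-q}\hat u_1^{r-q}\le\l,
\end{equation*}
which holds for all sufficiently small $\epsilon>0$ since $p-q,r-q>0$ and $\hat u_1\in L^\infty(\O)$. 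Because $\hat u_1/e$ is bounded above on $\overline\O$ (both functions lie in $\text{int}(C_s^0(\overline\O)_+)$ and behave like $d^s$ near $\partial\O$), one can further shrink $\epsilon$ to ensure $\underline{u}_\l\le\overline{u}_\l$ pointwise.

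Theorem \ref{t3.1} then yields a nonnegative weak solution $u_\l$ with $\underline{u}_\l\le u_\l\le\overline{u}_\l$; in particular $u_\l>0$ in $\O$, and uniqueness among positive solutions is granted by Theorem \ref{t3.2}. For the asymptotics as $\l\to 0^+$: the bound $|u_\l|_\infty\le M(\l)|e|_\infty\to 0$ proves $u_\l\to 0$ in $C(\overline\O)$, and testing the equation with $v=u_\l$ gives $\|u_\l\|^p\le\l|u_\l|_q^q\le C\l\|u_\l\|^q$ via the embedding $\w\hookrightarrow L^q(\O)$, so that $\|u_\l\|\to 0$ in $\w$. The main technical subtlety is the subsolution inequality: a naive Choquard term would dominate $\l t^{q-1}$ for small $t$, but freezing the convolution at $\overline{u}_\l$ (as dictated by the definition of uniform subsolution) turns it into a bounded coefficient perturbation that is easily absorbed by the smallness of $\epsilon$.
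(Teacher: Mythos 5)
Your proposal is correct and follows the same overall strategy as the paper (verify $(f_1)$--$(f_4)$, build a sub/supersolution pair, invoke Theorems \ref{t3.1} and \ref{t3.2}), but it diverges in two useful places. For the strong supersolution the paper solves the auxiliary sublinear problem $\pl u=\l u^{q-1}$, which costs it a full existence-and-uniqueness argument (minimization of $E_\l$, sign analysis, $L^\infty$ and $d^s$ bounds, and a Picone/hidden-convexity uniqueness step), whereas you simply rescale the torsion function $e$ with $\pl e=1$ and exploit the $(p-1)$-homogeneity of the operator; since $q>1$ the inequality $M^{p-q}\ge\l e^{q-1}$ reduces to a bound by $|e|_\infty$, so your choice of $M(\l)$ works and is considerably shorter (you do implicitly rely on $e\ge c\,d^s$ to dominate $\epsilon\hat u_1$, which is standard via Proposition \ref{p2.2} or the barriers in \cite{IMS}, but should be cited). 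The uniform subsolution $\epsilon\hat u_1$ is exactly the paper's choice, and your reduction to $\epsilon^{p-q}\hat\l_1\hat u_1^{p-q}+K_\l\epsilon^{r-q}\hat u_1^{r-q}\le\l$ is the same smallness condition as \eqref{e3.16}. For the asymptotics your route is genuinely more elementary and arguably cleaner: the explicit bound $|u_\l|_\infty\le M(\l)|e|_\infty\to0$ gives uniform convergence at once, and the energy test $\|u_\l\|^{p-q}\le C\l$ gives convergence in $\w$ directly, whereas the paper extracts subsequences, invokes the $(S)_+$-property of $\pl$ and $C^\mu$ equicontinuity to identify the limit as $0$ --- a detour its own first estimate already makes unnecessary. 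The one caveat is cosmetic: the paper's extra work on the auxiliary problem is not wasted, since it reuses that solution as the supersolution in Corollary \ref{c3.2}; your torsion-function supersolution would need the analogous modification there (it still works for $\l<\hat\l_1$, but requires a slightly different scaling argument to absorb the $\l u^{p-1}$ term).
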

	\begin{Cor}\label{c3.2}
		Suppose $1<q<p<r$. Then for all $\l <\hat{\l}_1$ and $r \in (1,\infty)$, problem 
		\begin{equation}\label{p2.13}
			\pl u=\l (u^{p-1}+u^{q-1})-\cq~\text{in}~\O, u=0~\text{in}~\Rn\setminus\O.
		\end{equation}
		admits a unique positive solution. Moreover, if $\l\rightarrow 0^+$, then $u_\l \rightarrow 0$ in $\w\cap C(\overline{\Omega})$.
	\end{Cor}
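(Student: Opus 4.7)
The plan is to apply Theorems \ref{t3.1} and \ref{t3.2} to the nonlinearity $f(x,t)=\l(t^{p-1}+t^{q-1})$. Checking the structural hypotheses is immediate: $f$ is Carath\'eodory and nonnegative on $\O \times (0,\infty)$, the ratio $f(x,t)/t^{p-1}=\l(1+t^{q-p})$ is strictly decreasing in $t>0$ since $q<p$, giving $(f_3)$, and the crude bound $|f(x,t)|\le 2\l(1+|t|^{p-1})$ gives $(f_4)$ with growth exponent $p\le p_s^\ast$. So the real content is to exhibit an ordered uniform-subsolution/strong-supersolution pair.

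For the supersolution I would take $\overline{u}$ to be a positive minimizer of the auxiliary functional
\begin{equation*}
\Phi(u)=\frac{\|u\|^p}{p}-\l\frac{|u|_p^p}{p}-\l\frac{|u|_q^q}{q}
\end{equation*}
on $\w$. Because $\l<\hat{\l}_1$, the characterization \eqref{ev} yields $\|u\|^p-\l|u|_p^p\ge(1-\l/\hat{\l}_1)\|u\|^p$, while the $L^q$ term is subcritical in $\|u\|$ since $q<p$; hence $\Phi$ is coercive and weakly lower semicontinuous. A nonnegative minimizer $\overline{u}$ exists, and it is nontrivial because $\Phi(t\hat{u}_1)<0$ for small $t>0$ (driven by $q<p$). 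Thus $\pl\overline{u}=\l(\overline{u}^{p-1}+\overline{u}^{q-1})=f(x,\overline{u})$, which together with the nonpositivity of the Choquard correction makes $\overline{u}$ a strong supersolution of $\p$. Theorem \ref{ape} gives $\overline{u}\in L^\infty(\O)\cap C^\mu(\overline{\O})$, and Proposition \ref{p2.2} yields $\overline{u}\ge c_0 d^s$ for some $c_0>0$.

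For the subsolution I set $\underline{u}=\epsilon\hat{u}_1$ with $\epsilon>0$ small. Using $\pl(\epsilon\hat{u}_1)=\epsilon^{p-1}\hat{\l}_1\hat{u}_1^{p-1}$, the defining inequality reduces pointwise, after division by $\epsilon^{q-1}\hat{u}_1^{q-1}(x)$, to
\begin{equation*}
(\hat{\l}_1-\l)\,\epsilon^{p-q}\,\hat{u}_1(x)^{p-q}+\epsilon^{r-q}\,\hat{u}_1(x)^{r-q}\int_\O\frac{\overline{u}(y)^r}{|x-y|^\a}\,dy\le \l.
\end{equation*}
Since $\overline{u}\in L^\infty(\O)$ and $\a<N$, the Riesz-type integral is uniformly bounded on $\O$; since $\hat{u}_1\in L^\infty(\O)$ and both exponents $p-q$ and $r-q$ are strictly positive, the whole left-hand side tends to zero uniformly as $\epsilon\to 0^+$, so the inequality holds for all small $\epsilon$. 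Because $\hat{u}_1\in\mathrm{int}(C_s^0(\overline{\O})_+)$ gives $\hat{u}_1\le M d^s$, the choice $\epsilon\le c_0/M$ also secures $\underline{u}\le\overline{u}$ a.e.\ in $\O$. Theorem \ref{t3.1} then produces a nonnegative solution $u_\l\in\w$ with $\underline{u}\le u_\l\le\overline{u}$, hence $u_\l>0$ in $\O$, and Theorem \ref{t3.2} furnishes uniqueness.

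For the asymptotic claim, testing $\p$ with $u_\l$ itself and discarding the nonnegative term $[u_\l]_r^{2r}$ yields $\|u_\l\|^p\le\l|u_\l|_p^p+\l|u_\l|_q^q$; applying \eqref{ev} and the continuous embedding $\w\hookrightarrow L^q(\O)$ gives $\|u_\l\|^{p-q}\le C\l/(1-\l/\hat{\l}_1)$, whence $\|u_\l\|\to 0$ as $\l\to 0^+$. A Moser iteration as in the proof of Theorem \ref{ape} upgrades this to $|u_\l|_\infty\to 0$; consequently the right-hand side of $\p$ converges to zero in $L^\infty(\O)$, and the $C^\mu$ estimate cited in Theorem \ref{ape} delivers $\|u_\l\|_{C^\mu(\overline{\O})}\to 0$, so $u_\l\to 0$ in $C(\overline{\O})$ by Arzela--Ascoli. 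The main technical obstacle is the pointwise subsolution inequality, which works precisely because of the ordering $q<p<r$: the subcriticality gap $p-q>0$ dominates the strong-maximum-principle deficit $(\hat{\l}_1-\l)$, and $r-q>0$ keeps the Choquard correction of lower order.
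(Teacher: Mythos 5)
Your proposal follows essentially the same route as the paper: the strong supersolution is the positive solution of $\pl u=\l(u^{p-1}+u^{q-1})$ obtained by minimizing its energy functional (coercive precisely because $\l<\hat{\l}_1$), the uniform subsolution is $\epsilon\hat{u}_1$ with $\epsilon$ small, and Theorems \ref{t3.1} and \ref{t3.2} then give existence and uniqueness; your pointwise verification of the subsolution inequality is just a more explicit version of the paper's condition \eqref{e3.16} adapted to this $f$. The only divergence is the asymptotics, where your direct bound $\|u_\l\|^{p-q}\le C\l/(1-\l/\hat{\l}_1)$ is a slightly more elementary and quantitative shortcut than the paper's $(S)_+$-compactness argument, but it reaches the same conclusion.
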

	The next result shows the existence and uniqueness of energy solution in the singular case.
	\begin{theorem}\label{t3.3}
		Suppose $f$ satisfies $(f_5)$ with $-q<\frac{p^2s}{sp-1}-(p-1)$. Then there exist a unique solution of $\p$ for all $r\in (1,\infty)$ and for all $\l>0$.
	\end{theorem}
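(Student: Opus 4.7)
The plan is to construct a solution by approximating the singular reaction from below and passing to the limit, following the scheme of \cite{AGW} and \cite{jds}. For $n \in \mathbb{N}$, set $f_n(x,t) = \lambda(t^+ + 1/n)^{q-1}$ and minimize
\[
J_n(u) = \frac{\|u\|^p}{p} - \frac{\lambda}{q}\int_\Omega \bigl[(u^+ + 1/n)^q - (1/n)^q\bigr]\,dx + \frac{[u]_r^{2r}}{2r}
\]
on $\w$. Since $q<1$, the perturbation grows at most linearly in $u$; together with Proposition \ref{hls} and the compact embedding $\w \hookrightarrow L^\nu(\Omega)$ for $\nu < p_s^\ast$, $J_n$ is coercive and weakly lower semicontinuous. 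A nonnegative global minimizer $u_n$ therefore exists, and Proposition \ref{p2.2} yields $u_n > 0$ in $\Omega$ with $u_n/d^s \geq c_n > 0$.

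The heart of the proof is producing uniform-in-$n$ bounds $\underline{u} \leq u_n \leq \overline{u}$. Let $w$ be the fractional torsion function satisfying $\pl w = 1$ in $\Omega$ with zero exterior datum, with precise boundary decay recorded in \cite{IMS}. Take $\overline{u} = M w$ with $M$ large enough to be a strong supersolution of the approximate problem for every $n$, and then choose $\underline{u} = \epsilon w$ with $\epsilon > 0$ so small that
\[
\pl \underline{u} = \epsilon^{p-1} \leq \lambda\,\underline{u}^{q-1} - \Bigl(\int_\Omega \frac{|\overline{u}(y)|^r}{|x-y|^\alpha}\,dy\Bigr)\underline{u}^{r-1} \quad \text{pointwise in } \Omega,
\]
which is possible because $\lambda \underline{u}^{q-1} \to +\infty$ uniformly as $\epsilon \to 0^+$. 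The T-monotonicity of $\pl$ (Proposition \ref{p2.1}) applied successively to $(u_n - \overline{u})^+$ and $(\underline{u} - u_n)^+$, combined with the fact that $u \mapsto \lambda(u + 1/n)^{q-1}$ is decreasing, then gives $\underline{u} \leq u_n \leq \overline{u}$ independently of $n$. The hypothesis $-q < \frac{p^2s}{sp-1} - (p-1)$ is precisely what makes $\int_\Omega \underline{u}^{q-1} v\,dx$ finite for every $v \in \w$; testing the $u_n$-equation with $u_n$ then also delivers a uniform $\w$-bound.

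Passing to a subsequence $u_n \rightharpoonup u$ in $\w$ and a.e., the lower pointwise bound persists and $u \geq \epsilon w > 0$. Testing against any $v \in C_c^\infty(\Omega)$: the leading $\pl$-term passes by the $(S)_+$ property of $\pl$; the singular term converges by Lebesgue dominated convergence with majorant $\lambda(\epsilon w)^{q-1}|v|$, integrable under the exponent condition; and the Choquard term by Proposition \ref{hls} combined with a.e.\ convergence and the uniform bound $u_n \leq \overline{u}$. Thus $u$ satisfies \eqref{dws}, and the regularity $u \in L^\infty \cap C^\mu$ follows from Theorem \ref{ape}. For uniqueness we adapt the argument of Theorem \ref{t3.2}: the singular reaction satisfies the decreasing property $(f_3)$ (since $\lambda t^{q-p}$ is strictly decreasing for $q < 1 < p$), and the Choquard contribution enters the Picone/Diaz-Saa comparison with the same favourable sign as in the regular case, so once two positive solutions are known to be comparable with $\underline{u}$ the argument closes verbatim.

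The main obstacle is establishing the uniform lower bound $u_n \geq \epsilon w$: the T-monotonicity argument has to absorb the nonlocal Choquard cross term, and this is precisely why $\underline{u}$ is defined with the Choquard integral evaluated at $\overline{u}$ rather than at $\underline{u}$ itself, mirroring the notion of uniform subsolution used in Theorem \ref{t3.1}. A secondary but essential point is the integrability of $\underline{u}^{q-1}v$ in the passage to the limit, which is exactly what the hypothesis $-q < \frac{p^2s}{sp-1} - (p-1)$ is tailored to ensure.
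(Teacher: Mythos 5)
Your overall architecture (ordered barriers, $T$-monotonicity, passage to the limit, Picone-type uniqueness) is the same as the paper's, but the proof breaks at the choice of supersolution. You take $\overline{u}=Mw$ with $w$ the torsion function, so $\pl \overline{u}=M^{p-1}$ is a bounded constant, while the reaction you need it to dominate, $\l(\overline{u}+1/n)^{q-1}$, tends pointwise to $\l (Mw)^{q-1}\sim \l M^{q-1}d^{s(q-1)}$ as $n\to\infty$, which blows up at $\partial\O$ because $q<1$. Hence no finite $M$ makes $Mw$ a strong supersolution uniformly in $n$; at best you get an $n$-dependent $M_n\sim n^{(1-q)/(p-1)}\to\infty$, which destroys the uniform upper bound. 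The failure is not cosmetic: for $q<0$ the actual solution behaves like $d^{sp/(p-q)}$ with $sp/(p-q)<s$, so it genuinely exceeds every multiple of $d^s$ near the boundary and the inequality $u_n\le Mw$ is simply false. Since your uniform \emph{sub}solution is built with the Choquard kernel evaluated at $\overline{u}$, the collapse of the upper barrier also invalidates the lower-bound comparison, and with it the uniform positivity needed for the $(S)_+$/dominated-convergence limit passage and for the comparability of two solutions in the uniqueness step.

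The paper avoids this by first solving the purely singular auxiliary problem $\pl \bar{u}=\l \bar{u}^{q-1}$ (existence, uniqueness and the two-sided bounds $C_1d^{\delta}\le\bar{u}\le C_2d^{\beta}$ come from \cite{AGW}), and using \emph{that} $\bar{u}$ as the strong supersolution: its fractional $p$-Laplacian blows up at the boundary at exactly the rate of the singular reaction, which is what a supersolution of a singular problem must do. The subsolution is then $\epsilon\bar{u}$, and the rest of your scheme (including the role of the exponent restriction $-q<\frac{p^2s}{sp-1}-(p-1)$ in making $\underline{u}^{q-1}v$ integrable, and the convexity/Picone uniqueness argument) goes through essentially as you describe. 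Your $1/n$-regularization versus the paper's direct truncation between the barriers is an immaterial difference; replacing $Mw$ by the solution of the auxiliary singular problem is the missing ingredient.
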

	The theorem below states the existence of solution to the Brezis- Nirenberg type problem:
	\begin{theorem}\label{t5.1}
		Let $1<p<p_s^\ast$, and $2r < p_s^\ast$ where $r< p_{s,\a}^\ast$, and $p,s$ are such that $sp^2 < \min\{N,(2N-\a)p-2r(N-sp)\}$. Then for all $\l<\hat{\l}_1$, there exist a nonnegative and nontrivial solution of problem \eqref{e1.5}.
	\end{theorem}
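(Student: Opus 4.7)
The plan is to cast problem \eqref{e1.5} variationally and apply the Mountain Pass theorem to the energy functional
\[ I_\l(u)=\frac{\|u\|^p}{p}-\frac{\l}{p}\int_\Omega (u^+)^{p}\,dx-\frac{1}{p_s^{\ast}}\int_\Omega (u^+)^{p_s^{\ast}}\,dx+\frac{1}{2r}[u^+]_r^{2r}, \]
where working with $u^+$ in the three non-quadratic terms automatically forces critical points to be nonnegative (testing against $u^-$ and using the $T$-monotonicity from Proposition \ref{p2.1}). The first step is to verify the Mountain Pass geometry. Near the origin, by the Sobolev embedding and the definition of $\hat{\l}_1$, we obtain $I_\l(u)\geq \frac{1}{p}(1-\l/\hat{\l}_1)\|u\|^p - C\|u\|^{p_s^{\ast}}$, which is bounded below by some $\eta>0$ on a small sphere because $\l<\hat{\l}_1$ and $p<p_s^{\ast}$. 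Far away, for any fixed nonnegative $\varphi\not\equiv 0$, since $p_s^{\ast}>\max\{p,2r\}$ (using the hypothesis $2r<p_s^{\ast}$), $I_\l(t\varphi)\to -\infty$ as $t\to\infty$.

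Next, I would prove a local Palais--Smale condition. Boundedness of a PS sequence $\{u_n\}$ follows from the standard trick
\[ I_\l(u_n)-\frac{1}{p_s^{\ast}}\langle I_\l'(u_n),u_n\rangle=\Bigl(\frac{1}{p}-\frac{1}{p_s^{\ast}}\Bigr)\bigl(\|u_n\|^p-\l|u_n^+|_p^p\bigr)+\Bigl(\frac{1}{2r}-\frac{1}{p_s^{\ast}}\Bigr)[u_n^+]_r^{2r}, \]
using $|u_n^+|_p^p\leq \hat{\l}_1^{-1}\|u_n\|^p$ and $\l<\hat{\l}_1$. Then, by passing to a weak limit $u$, setting $w_n=u_n-u$, and combining the Brezis--Lieb lemma with the nonlocal Brezis--Lieb identity for the Choquard term (cf.\ \cite{MS}), one shows that if $\{u_n\}$ fails to converge strongly then its limiting concentrated mass $b\geq S^{N/(ps)}$ where $S$ is the fractional Sobolev constant, forcing $c\geq \frac{s}{N}S^{N/(ps)}$. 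Hence $I_\l$ satisfies $(PS)_c$ for every $c<c^{\ast}:=\frac{s}{N}S^{N/(ps)}$.

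The crux of the proof---and the main obstacle---is to show that the Mountain Pass level lies strictly below $c^{\ast}$. I would use the truncated Sobolev extremal family $\{U_{\epsilon,x_0}\}$ and the sharp asymptotics developed in \cite{MPSY}: on fine concentration scales these satisfy $\|U_\epsilon\|^p=S^{N/(ps)}+O(\epsilon^{(N-ps)/(p-1)})$, $|U_\epsilon|_{p_s^{\ast}}^{p_s^{\ast}}=S^{N/(ps)}+O(\epsilon^{N/(p-1)})$, together with the sharp lower bound $|U_\epsilon|_p^p\gtrsim \epsilon^{ps}|\log\epsilon|$ (when $N=p^2s$, with the obvious power-type bound otherwise) and the Choquard-type bound $[U_\epsilon]_r^{2r}=O(\epsilon^{\tau})$ for the exponent $\tau=(2N-\alpha)-\frac{r(N-ps)}{p}\cdot 2$ after a Hardy--Littlewood--Sobolev estimate. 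Maximizing $I_\l(tU_\epsilon)$ in $t\geq 0$, the dominant correction beyond the leading $\frac{s}{N}S^{N/(ps)}$ is $-c_1\l\epsilon^{ps}|\log\epsilon|$ (from the $\l|u|_p^p$ term) plus $+c_2\epsilon^{\tau}$ (from the Choquard term). The hypothesis $sp^2<(2N-\alpha)p-2r(N-sp)$ is precisely what guarantees $ps<\tau$, so the negative linear correction dominates for small $\epsilon$ and yields $\max_{t\geq 0}I_\l(tU_\epsilon)<c^{\ast}$; likewise $sp^2<N$ ensures the remainder terms in $\|U_\epsilon\|^p$ and $|U_\epsilon|_{p_s^{\ast}}^{p_s^{\ast}}$ are negligible.

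Combining these three ingredients with the Mountain Pass theorem produces a critical point $u\in W_0^{s,p}(\Omega)$ of $I_\l$ at a level $c\in(0,c^{\ast})$, hence nontrivial. Testing $I_\l'(u)=0$ against $u^-$ and invoking Proposition \ref{p2.1} gives $u\geq 0$ a.e. in $\Omega$, concluding the proof. I expect the technical heart of the argument to be the delicate accounting of the Choquard correction in the energy expansion and matching it carefully against the dimensional threshold in the hypothesis.
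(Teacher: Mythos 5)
Your proposal is correct and follows essentially the same route as the paper: mountain pass geometry, a compactness threshold at $\frac{s}{N}S^{N/(sp)}$, and the decisive energy estimate using the truncated Sobolev extremals of \cite{MPSY} together with the Hardy--Littlewood--Sobolev bound $[u_{\epsilon,\delta}]_r^{2r}\leq C\epsilon^{2N-\alpha-2r(N-sp)/p}$, with the hypothesis $sp^2<\min\{N,(2N-\alpha)p-2r(N-sp)\}$ playing exactly the role you identify. The only (immaterial) differences are in packaging: the paper obtains compactness by assuming $0$ is the only critical point and deriving a contradiction rather than proving a local Palais--Smale condition via Brezis--Lieb, and it gets nonnegativity from $I_\l(|u|)\leq I_\l(u)$ rather than by truncating the functional with $u^+$.
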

	Finally we prove existence of nodal solution in the subcritical case:
	\begin{theorem}\label{t2.11}
		Let $1< p<2r <q$, where $r < p_{s,\a}^\ast$ and let $m_\l$ and $c_\l$ be given by \eqref{e2.10} and \eqref{e2.11} respectively. Then for any $\l>0$ problem $(P_\l$) admits at least an energy sign-changing solution $u_\l \in \w$ such that $J(u_\l)=c_\l$. Moreover, $c_\l >2m_\l$.
	\end{theorem}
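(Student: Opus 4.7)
My plan is to minimize the restricted functional $J_\lambda|_{M_\lambda}$, verify that the infimum is attained, show the minimizer is a genuine critical point of $J_\lambda$ on $\w$, and finally establish the gap $c_\lambda>2m_\lambda$. Throughout, the hypothesis $1<p<2r<q$ enters as the positivity of $1/p-1/q$ and $1/(2r)-1/q$, which controls fibering in both the one- and two-parameter scalings.

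\emph{Step 1: fibering and $M_\lambda\neq\emptyset$.} For $u\in\w$ with $u^\pm\not\equiv 0$, I would analyze
\[
\Psi_u(s,t):=J_\lambda(su^+ + tu^-),\qquad s,t>0.
\]
Using $u^+u^-\equiv 0$ pointwise,
\[
|su^+ + tu^-|_q^q=s^q|u^+|_q^q+t^q|u^-|_q^q,\qquad [su^+ + tu^-]_r^{2r}=s^{2r}[u^+]_r^{2r}+t^{2r}[u^-]_r^{2r}+2s^r t^r B(u^+,u^-),
\]
where $B(u^+,u^-):=\int_\O\int_\O\frac{|u^-(y)|^r|u^+(x)|^r}{|x-y|^\a}\,dxdy>0$, while $\|su^++tu^-\|^p$ decomposes into same-sign contributions proportional to $s^p$ and $t^p$ plus nonnegative mixed-sign cross terms obtained by splitting $Q$ into the four sign regions of $(u(x),u(y))$. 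Using $p<2r<q$, $\partial_s\Psi_u$ is positive near $s=0$ and negative for large $s$, uniformly in $t$ on compact sets; symmetrically in $t$. A two-dimensional intermediate-value argument (Miranda or Brouwer degree on $[\epsilon,R]^2$) then produces a critical point $(s_u,t_u)\in(0,\infty)^2$, and uniqueness together with the fact that $(s_u,t_u)$ realizes $\max\Psi_u$ comes from dividing each Nehari relation by $s^{p-1}$, resp.\ $t^{p-1}$, and exploiting strict monotonicity of the resulting expressions in $s^{2r-p},s^{q-p}$ (and symmetrically in $t$). By construction $s_u u^+ + t_u u^-\in M_\lambda$.

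\emph{Step 2: existence of a minimizer.} On $M_\lambda$ the identity
\[
J_\lambda(u)=J_\lambda(u)-\tfrac{1}{q}\langle J_\lambda'(u),u\rangle=\Bigl(\tfrac1p-\tfrac1q\Bigr)\|u\|^p+\Bigl(\tfrac1{2r}-\tfrac1q\Bigr)[u]_r^{2r}
\]
yields $c_\lambda>0$ and bounds any minimizing sequence $\{u_n\}\subset M_\lambda$ in $\w$. Up to subsequence, $u_n\rightharpoonup u_\lambda$ in $\w$; the truncation bound $\|w^\pm\|\leq\|w\|$ gives $u_n^\pm\rightharpoonup u_\lambda^\pm$ in $\w$, and compact embedding gives strong convergence in $L^\nu(\O)$ for $\nu\in[1,p_s^\ast)$. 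Splitting $Q$ by the signs of $(u_n(x),u_n(y))$ yields $\langle\pl u_n,u_n^\pm\rangle\geq\|u_n^\pm\|^p$; combined with $\langle J_\lambda'(u_n),u_n^\pm\rangle=0$ and HLS this forces $\|u_n^\pm\|^p\leq\lambda C\|u_n^\pm\|^q$, hence $\|u_n^\pm\|\geq c_0>0$, so by strong $L^q$-convergence $u_\lambda^\pm\not\equiv 0$. Applying Step~1 to $u_\lambda$ produces the unique $(s^*,t^*)\in(0,\infty)^2$ with $s^* u_\lambda^+ + t^* u_\lambda^-\in M_\lambda$; weak lower semicontinuity of $\|\cdot\|^p$ together with weak continuity of $|\cdot|_q^q$ and $[\cdot]_r^{2r}$ on $\w$ gives
\[
J_\lambda(s^* u_\lambda^+ + t^* u_\lambda^-)\leq\liminf_n J_\lambda(s^* u_n^+ + t^* u_n^-)\leq\liminf_n\Psi_{u_n}(1,1)=c_\lambda,
\]
where the middle inequality is because $(1,1)$ maximizes $\Psi_{u_n}$. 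Combined with $J_\lambda(s^* u_\lambda^+ + t^* u_\lambda^-)\geq c_\lambda$ and the uniqueness in Step~1, this forces $(s^*,t^*)=(1,1)$, $u_\lambda\in M_\lambda$, $J_\lambda(u_\lambda)=c_\lambda$, and the $(S)_+$ property of $\pl$ upgrades $u_n\rightharpoonup u_\lambda$ to strong convergence.

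\emph{Step 3: criticality and $c_\lambda>2m_\lambda$.} If $J_\lambda'(u_\lambda)\ne 0$, a standard quantitative deformation lemma, with the continuous projection $u\mapsto (s_u,t_u)$ from Step~1 playing the role of a barycenter map, produces $v\in M_\lambda$ with $J_\lambda(v)<c_\lambda$, a contradiction, so $u_\lambda$ is a weak sign-changing solution of $(P_\lambda)$. For the gap, $B(u_\lambda^+,u_\lambda^-)>0$ together with the $M_\lambda$-equations forces $\|u_\lambda^\pm\|^p+[u_\lambda^\pm]_r^{2r}<\lambda|u_\lambda^\pm|_q^q$, so there exist unique $\tau_\pm\in(0,1)$ with $\tau_\pm u_\lambda^\pm\in N_\lambda$, giving $J_\lambda(\tau_\pm u_\lambda^\pm)\geq m_\lambda$. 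The pointwise inequality $(a+b)^p\geq a^p+b^p$ for $a,b\geq 0$ applied on the mixed-sign regions of $u_\lambda$ gives $\|\tau_+ u_\lambda^+ + \tau_- u_\lambda^-\|^p\geq\tau_+^p\|u_\lambda^+\|^p+\tau_-^p\|u_\lambda^-\|^p$, so using that $(1,1)$ maximizes $\Psi_{u_\lambda}$,
\[
c_\lambda=J_\lambda(u_\lambda)\geq\Psi_{u_\lambda}(\tau_+,\tau_-)\geq J_\lambda(\tau_+ u_\lambda^+)+J_\lambda(\tau_- u_\lambda^-)+\tfrac{\tau_+^r\tau_-^r}{r}B(u_\lambda^+,u_\lambda^-)>2m_\lambda.
\]
The main obstacle is Step~1: because $\pl$ does not split additively along $u^+$ and $u^-$ and the Choquard functional introduces the cross interaction $B(u^+,u^-)$, $\Psi_u$ is genuinely two-dimensional, so the uniqueness and maximality of $(s_u,t_u)$ require a careful joint monotonicity argument in $(s,t)$ rather than two independent one-dimensional Nehari scalings on $u^+$ and $u^-$ separately.
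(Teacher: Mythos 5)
Your plan is correct and follows essentially the same route as the paper: a two-parameter fibering analysis on the Nehari nodal set with Miranda's theorem for existence of the projection, uniqueness and maximality of $(l_u,m_u)$ via the ordering/monotonicity argument, minimization with compact embeddings and weak lower semicontinuity to show the infimum is attained at $(1,1)$, a quantitative deformation lemma for criticality, and projection of $u_\l^\pm$ onto $N_\l$ with scalars in $(0,1]$ for the energy gap. The only cosmetic difference is that you extract strictness in $c_\l>2m_\l$ from the positive Choquard cross term $B(u_\l^+,u_\l^-)>0$, whereas the paper uses the strict superadditivity $\|u_\l\|^p>\|u_\l^+\|^p+\|u_\l^-\|^p$ of the Gagliardo seminorm; both work, and you correctly identify the joint two-dimensional monotonicity in Step 1 as the delicate point that the paper handles with the $Z_i^\pm$ cross-term estimates.
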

	\begin{remark}
		Using main results in \cite{GDS} and \cite{GDS2}, all above results can be in a straightforward way extended in case of nonhomogeneous operators as $p,q$-fractional laplacians.
	\end{remark}
	\section{Regular Case}\label{RC}
	This section is devoted to the case when $f$ is regular. We shall discuss the existence and the uniqueness of the nonnegative weak solution to the problem $\p$. We first prove the existence of the weak solution under the conditions stated in Theorem \ref{t3.1}.
	
	\begin{proof}
		We choose $M >|\overline{u}|_\infty$ and define 
		\begin{equation}\label{e3.2}
			\hat{f}(x,t)=\begin{cases}
				f(x,\underline{u}(x),&\text{if}~t<\underline{u}(x),\\
				f(x,t),&\text{if}~ \underline{u}(x)\leq t \leq \overline{u}(x),\\
				f(x,\overline{u}(x)),&\text{if}~\overline{u}(x)<t.
			\end{cases}
		\end{equation}
		We also define the continuous, nonnegative and affine function $T_M$ such that \begin{equation}\label{e3.3}
			T_M(t)=\begin{cases}
				0, &\text{if}~t<0,\\
				t, &\text{if}~0\leq t\leq M,\\
				0, &\text{if}~2M\leq t.
			\end{cases}
		\end{equation}
		Now, consider the problem
		\begin{equation}\label{e3.4}
			\begin{cases}	\pl u= \hat{f}(x,u)-\displaystyle\int\limits_\O \frac{|T_M(u(y))|^r(T_M(u(x)))^{r-1}T'_M(u(x))}{|x-y|^\a}dy&\text{in}~\O,\\
				u=0 &\text{in}~\Rn\setminus\O.
			\end{cases}
		\end{equation}
		The associated energy functional is given by 
		\begin{equation*}
			\hat{J}(u)=\frac{\|u\|^p}{p}-\int\limits_\O\hat{F}(x,u)dx+\frac{[T_M(u)]_r^{2r}}{2r},
		\end{equation*}
		where $\hat{F}(x,t)=\displaystyle\int\limits_{0}^t\hat{f}(x,\tau)d\tau.$\\ Clearly $\hat{J} \in C^1(\w,\Rr).$ Also by Sobolev embedding and Hardy-Littlewood-Sobolev inequality, $\hat{J}$ is coercive and weakly lower semicontinuous. Thus there exist $u_0 \in \w$ that satisfies $$ \hat{J}(u_0)=\min_{u \in \w}\hat{J}(u).   $$ So $u_0$ satisfies \eqref{e3.4} in the weak sense. Testing \eqref{e3.4} with $ (u_0-\bar{u})^+$ and using the fact that $\bar{u}$ is a strong supersolution, we obtain
		\begin{align*}
			\langle \pl u_0, (u_0-\bar{u})^+ &= \int\limits_\O \hat{f}(x,u_0)(u_0-\bar{u})^+dx\\
			& -\int\limits_\O\int\limits_\O\frac{|T_M(u_0(y))|^r(T_M(u_0(x)))^{r-1}T'_M(u_0(x))(u_0-\bar{u})^+(x)}{|x-y|^\a}dxdy\\
			\leq & \int\limits_\O {f}(x,u_0)(u_0-\bar{u})^+dx\leq \langle \pl \bar{u}, (u_0-\bar{u})^+\rangle.
		\end{align*}	
		Using Proposition \ref{p2.1}, we conclude that $u_0\leq \bar{u}$. This also implies $u_0 <M$. Again testing 	\eqref{e3.4} with $(\underline{u}-u_0)^+$ and using the fact  that $\underline{u}$ is a uniform subsolution with respect to $\overline{u}$, we have
		\begin{align*}
			\langle \pl u_0, (\underline{u}-u_0)^+\rangle&=\int\limits_\O\hat{f}(x,u_0) (\underline{u}-u_0)^+\\
			&-\int\limits_\O\int\limits_\O\frac{|T_M(u_0(y))|^r(T_M(u_0(x)))^{r-1}T'_M(u_0(x))(\underline{u}-u_0)^+(x)}{|x-y|^\a}dxdy\\
			=& 	\int\limits_\O{f}(x,\underline{u}) (\underline{u}-u_0)^+-\int\limits_\O\int\limits_\O\frac{|u_0(y)|^r(u_0(x))^{r-1}(\underline{u}-u_0)^+(x)}{|x-y|^\a}dxdy\\
			\geq&\int\limits_\O{f}(x,\underline{u}) (\underline{u}-u_0)^+-\int\limits_\O\int\limits_\O\frac{|\overline{u}(y)|^r(\underline{u}(x))^{r-1}(\underline{u}-u_0)^+(x)}{|x-y|^\a}dxdy\\
			\geq& \langle \pl \underline{u}, (\underline{u}-u_0)^+\rangle.	
		\end{align*}
		Again using Proposition \ref{p2.1}, we see that $\underline{u}\leq u_0$. Hence, $0\leq\underline{u}\leq u_0 \leq \overline{u}< M$ and so $u_0$ is a nonnegative weak solution of $ \p$.			
	\end{proof}
	\begin{remark}
		If the conditions $\overline{u} \in L^\infty(\O)$ and $\underline{u}\geq0$ are dropped, then following the same ideas we can show the existence of weak solution for $p_{s,\a}^\# \leq r \leq p_{s,\a}^\ast$ without the cut-off argument.
	\end{remark}
	Next we prove the uniqueness result stated in Theorem \ref{t3.2}.
	\begin{proof}
		We first show that any weak solution $u$ of $\p$ satisfies 
		\begin{equation} \label{e3.5}
			C_1 d^s \leq u \leq C_2 d^s~ \text{a.e. in}~\O,
		\end{equation}
		for some $C_1,C_2 >0$. Since $u \in L^\infty(\O)$ (see Theorem \ref{ape}), by \cite[Theorem 4.4]{IMS} we have that $u \leq C_1d^s$ a.e. in $\O$. Next, set $$c(x)=\int\limits_\O \frac{|u(y)|^r}{|x-y|^\a}dy.$$  Since $0<\a < N$ and $u \in L^\infty(\O)$, we have $$c(x)=\int\limits_\O \frac{|u(y)|^r}{|x-y|^\a}dy\leq |u|_\infty^{r}\int\limits_\O\frac{dy}{|x-y|^\a}\leq M_1.$$
		Also, $$c(x)\geq \frac{1}{(\text{diam}\O)^\a}\int\limits_\O|u(y)|^rdy \geq M_2.$$
		Hence by Proposition \ref{p2.2}, $u\geq C_1 d^s$ a.e. in $\O$ for some $C_1>0$. Now let $$V:= \{w:\O\rightarrow (0,\infty): w^\frac{1}{p}\in \w\},$$
		and define 
		$\Phi: V \rightarrow \Rr$ by $$\Phi(w)=J(w^\frac{1}{p}).$$ First, we claim that $\Phi$ is strictly convex. Indeed,
		\begin{align*}
			\Phi(w)=& \frac{1}{p}\int\limits_Q \frac{|w^\frac{1}{p}(x)-w^\frac{1}{p}(y)|^p}{|x-y|^{N+ps}}dydx -\int\limits_\O F(x,w^\frac{1}{p})dx + \frac{1}{2r}\int\limits_\O\int\limits_\O \frac{|w(y)|^\frac{r}{p}|w(x)|^\frac{r}{p}}{|x-y|^\a}dydx\\
			=& I_1(w)+I_2(w)+I_3(w),
		\end{align*}
		where $\displaystyle I_1(w)=\frac{1}{p}\int\limits_Q \frac{|w^\frac{1}{p}(x)-w^\frac{1}{p}(y)|^p}{|x-y|^{N+ps}}dydx$, $\displaystyle I_2(w)= -\int\limits_\O F(x,w^\frac{1}{p})dx$, and $\displaystyle I_3(w)=\frac{1}{2r}\int\limits_\O\int\limits_\O \frac{|w(y)|^\frac{r}{p}|w(x)|^\frac{r}{p}}{|x-y|^\a}dydx$.\\
		Since $p<r$, $I_3$ is strictly convex. Next note that 
		$$\frac{d F(x,t^\frac{1}{p})}{dt}= \frac{1}{p}f(x,t^\frac{1}{p}){t^\frac{1-p}{p}},$$ which by $(f_2)$ is decreasing. This proves that $I_2$ is convex. It remains to show that $I_1$ is convex. For this, let $w_1, w_2 \in V$. Then $w_1 =u_1^p$ and $w_2=u_2^p$, for some $u_1,u_2 \in \w$. Now, \begin{align}\nonumber\label{un}
			I_1((1-t)w_1+tw_2)=&\frac{1}{p}\int\limits_Q\frac{|[(1-t)w_1(x)+tw_2(x)]^\frac{1}{p}-[(1-t)w_1(x)+tw_2(x)]^\frac{1}{p}|^p}{|x-y|^{N+ps}}dydx\\
			=& \frac{1}{p}\int\limits_Q\frac{|[(1-t)u_1^p(x)+tu_2^p(x)]^\frac{1}{p}-[(1-t)u_1^p(y)+tu_2^p(y)]^\frac{1}{p}|^p}{|x-y|^{N+ps}}dydx.
		\end{align}
		Let $\sigma_t(x)=[(1-t)u_1^p(x)+tu_2^p(x)]^\frac{1}{p}$ for all 
		$t \in (0,1).$ Then
		by \cite[Proposition 4.1]{BF}, \begin{equation}\label{12}
			|\sigma_t(x)-\sigma_t(y)|^p \leq (1-t)|u_1(x)-u_1(y)|^p+t|u_2(x)-u_2(y)|^p. \end{equation}
		From \eqref{un} and \eqref{12}, we get
		\begin{align*}
			I_1((1-t)w_1+tw_2)\leq& \frac{(1-t)}{p}\int\limits_Q\frac{|u_1(x)-u_1(y)|^p}{|x-y|^{N+ps}}dydx+\frac{t}{p}\int\limits_Q\frac{|u_2(x)-u_2(y)|^p}{|x-y|^{N+ps}}dydx\\
			=&(1-t)I_1(w_1)+tI_1(w_2).
		\end{align*}
		This proves the claim. Now, let $u_0$ be a global minimizer of $J$. Then from \eqref{e3.5}, $v_0 =u_0^p$ is a global minimizer of $\Phi$. If possible, suppose $u_1 \in \w_+$ is another positive critical point of $J$ and let $v_1 =u_1^p \in V$. By \eqref{e3.5}, $u_1/u_0,u_0/u_1 \in L^\infty (\O)$ and so $\frac{u_1^p-u_0^p}{u_1^{p-1}}\in \w$. Now, consider $l :[0,1]\rightarrow \Rr$ defined by $$l(t)=\Phi((1-t)u_0+tu_1).$$ It is easy to see that $l$ is strictly convex and differentiable function. Note that $t=0$ is a global minimizer of $l$ and so $l^\prime(0)=0$. Then, by strict convexity of $l$, we have $l^\prime(1)>0$. But $$l^\prime(1)=\langle J^\prime(u_1),\frac{u_1^p-u_0^p}{u_1^{p-1}}\rangle=0,$$
		as $u_1$ is a critical point of $J$. Thus, we arrive at a contradiction. This proves the uniqueness.
	\end{proof}
	Next we give the proof of Corollaries \ref{c3.1} and \ref{c3.2} to complete this section.
	\subsection*{Proof of Corollary \ref{c3.1}} 
	\begin{proof} We construct a strong supersolution $\bar{u}_\l$ and a uniform subsolution $\underline{u}_\l$ with respect to $\bar{u}_\l$. To this end, first consider
		\begin{align}\label{e3.9}
			\pl u = \l u^{q-1}~\text{in}~\O,
			u=0~\text{in}~\Rn\setminus\O.
		\end{align}
		We claim that this problem has a unique positive solution $\bar{v}_\l \in \w.$
		For $\l >0$ and $t \in \Rr$, we define $$g_\l(t)=\l(t^+)^{q-1},$$
		$$ G_\l(t)=\int\limits_{0}^{t} g_\l(\tau)d\tau=\frac{(t^+)^q}{q}.$$
		We set for all $u \in \w$,
		$$   E_{\l}(u) = \frac{\|u\|^p}{p} -\int\limits_\O G_\l(u)dx.$$
		Clearly, $E_\l \in C^1(\w,\Rr)$ and $$ \langle E_\l^{\prime}(u), v \rangle = \langle \pl u ,v \rangle -\l \int\limits_{\O}(u^+)^{q-1}vdx.$$	Let $\l>0$ be fixed. By Sobolev embedding theorem, it is easily seen that $E_\l$ is coercive and sequentially weakly lower semicontinuous. Thus there exists $\bar{u}_\l \in \w$ such that 
		\begin{equation}\label{e3.10}
			E_\l(\bar{u}_\l)=\inf_{\w}E_\l =:m_\l
		\end{equation}
		Now for all $ \nu >0$,
		$$  E_\l(\nu \hat{u}_1) = \nu^p\frac{\|\hat{u}_1\|^p}{p}-\l \nu^q \frac{|\hat{u}_1|_q^q}{q}. $$                                   
		Since $q<p$, $E_\l <0$ for all $\nu >0$ small enough. Hence in \eqref{e3.10} we have $m_\l <0$, implying $\bar{u}_\l \not \equiv 0$. From \eqref{e3.10}, we have $E_\l^{\prime}(\bar{u}_\l)=0$ in $W^{-s,p^{\prime}}(\O)$, i.e., we have weakly in $\O$ 
		\begin{equation}\label{e3.11}
			\pl \bar{u}_\l =\l (\bar{u}_\l^+)^{q-1} ~\text{in}~ \O.
		\end{equation}
		Testing \eqref{e3.11} with $\bar{u}_\l^- \in \w$, we get
		\begin{align*}
			\|\bar{u}_\l^-\|^p\leq& \langle\pl \bar{u}_\l,\bar{u}_\l^-\rangle
			= \int\limits_\O (\bar{u}_\l^+)^{q-1}\bar{u}_\l^- 
			=0.
		\end{align*}
		This implies $\bar{u}_\l \geq 0$. Hence \eqref{e3.11} reads as
		\begin{equation}\label{e3.12}
			\pl \bar{u}_\l =\l \bar{u}_\l^{q-1}~\text{weakly in}~ \O.
		\end{equation}
		By using \cite[Theorem 4.1]{HS}, we have $\bar{u}_\l \in L^\infty(\O)$ and then by \cite[Theorem 4.4]{IMS}, we see that $\bar{u}_\l \leq C_1d^s$, for some $C_1 >0$. Note that $\epsilon \hat{u}_1$ is a subsolution of \eqref{e3.9}, for $\epsilon$ small enough and so $\bar{u}_\l\geq \epsilon \hat{u}_1$. Thus, there exist $C_2 >0$ such that $C_2 d^s \leq \bar{u}_\l$. Hence, we have  that 
		\begin{equation}\label{e3.13}
			C_2\leq \frac{\bar{u}_\l}{d^s}\leq C_1 .
		\end{equation}
		Now, let $v_\l$ be another positive solution of \eqref{e3.9}, i.e., we have weakly in $\O$ 
		\begin{equation}\label{e3.14}
			\pl v_\l = \l v_\l^{q-1}.
		\end{equation}
		As above, $v_\l$ also satisfy \eqref{e3.13}. This imply that $\displaystyle\frac{\bar{u}_\l^p-v_\l^p}{\bar{u}_\l^{p-1}}, \displaystyle\frac{v_\l^p-\bar{u}_\l^p}{v_\l^{p-1}} \in \w$. Testing \eqref{e3.12} and \eqref{e3.14} with $\displaystyle\frac{\bar{u}_\l^p-v_\l^p}{\bar{u}_\l^{p-1}}$ and $\displaystyle\frac{v_\l^p-\bar{u}_\l^p}{v_\l^{p-1}}$ respectively and then adding, we get that
		\begin{equation}\label{e3.15}
			\langle \pl \bar{u}_\l,\frac{\bar{u}_\l^p-v_\l^p}{\bar{u}_\l^{p-1}}\rangle+	\langle \pl {u}_\l,\frac{v_\l^p-\bar{u}_\l^p}{v_\l^{p-1}}\rangle=\l \int\limits_\O(\bar{u}_\l^{q-p}-v_\l^{q-p})(\bar{u}_\l^p-v_\l^p).
		\end{equation}
		By using \cite[Lemma 1.8]{GGM} and $q<p$, we obtain from \eqref{e3.15} that $$\bar{u}_\l=v_\l~ \text{a.e. in}~ \O.$$
		Clearly, $\bar{u}_\l$ is a  strong supersolution of \eqref{p2.12}. \\
		Now, for uniform subsolution with respect to $\bar{u}_\l$ take $\underline{u}_\l=\epsilon\hat{u}_1$, where $\epsilon$ is chosen small enough such that 
		\begin{equation}\label{e3.16}
			\frac{\l(\epsilon\hat{u}_1(x))^{q-1}}{\hat{\l}_1(\epsilon\hat{u}_1(x))^{p-1}+\epsilon^{2r-1}\displaystyle\int\limits_\O\frac{|\bar{u}_\l(y)|^r|\hat{u}_1(x)|^{r-1}}{|x-y|^\a}dydx}>1,
		\end{equation}
		uniformly for  $x \in \O$. Such a choice of $\epsilon$ is possible since the limit of left-hand side of \eqref{e3.16} tends to $\infty$ as $\epsilon \rightarrow 0$ because $q<p<r$ and $\bar{u}_\l \in L^\infty(\O)$ together with $\hat{u}_1\sim d^s$. Now the existence and uniqueness of positive solution of \eqref{p2.12} is guaranteed by Theorem \ref{t3.1} and Theorem \ref{t3.2}.
		Finally, let $(\l_n)$ be a sequence in $(0,\infty)$ such that $\l_n \rightarrow 0^+$, and $u_n \in \w_+$ be the corresponding solution for all $n \in \N$, i.e, we have weakly in $\O$
		\begin{equation}\label{e3.17}
			\pl u_n =\l_n u_n^{q-1}-\left(\int\limits_\O \frac{|u_n(y)|^r}{|x-y|^\a}dy\right)(u_n(x))^{r-1}.
		\end{equation} 
		Testing \eqref{e3.17} with $u_n \in \w$, for all $n \in \N$ and using Sobolev embedding, we have
		\begin{align*}
			\|u_n\|^p = \l_n|u_n|_q^q - [u_n]_r^{2r} \leq \l_1c\|u_n\|^q.
		\end{align*}	
		
		Since $q<p$, $(u_n)$ is bounded in $\w$. By reflexivity and the compact embedding, we can pass to a subsequence such that
		$$u_n \rightharpoonup u_0~ \text{in} ~\w,$$  $$u_n \rightarrow u_0 ~\text{in}~L^\nu(\O)~\text{for all}~\nu \in [1,p_s^\ast).$$
		Also, by Theorem \ref{ape}, $u_n$ is bounded in $C^\mu(\O)$ and so upto a subsequnence $u_n \rightarrow u_0$ in $C(\overline{\O}).$	
		Testing \eqref{e3.17} with $(u_n-u_0) \in \w$, and using the facts that $0<\a<N$, $u_n \in L^{\infty}(\O)$, Sobolev embedding and H\"{o}lder inequality, we get	
		\begin{align}\label{e3.18}\nonumber
			\langle \pl u_n, u_n-u_0\rangle =& \l_n\int\limits_{\O}u_n^{q-1}(u_n-u_0)dx\\ \nonumber &-\displaystyle\int\limits_\O\int\limits_\O\frac{|u_n(y)|^r|u_n(x)|^{r-2}u_n(x)(u_n(x)-u_0(x))}{|x-y|^\a}dydx \\
			\leq&C(|u_n-u_0|_q+|u_n-u_0|_{C(\overline{\O})})\rightarrow 0.
		\end{align}
		Thus, we have				
		$$ \limsup_{n}\langle \pl u_n, u_n-u_0\rangle \leq 0.$$
		By the $(S)_+$-property of $\pl$, we have $u_n \rightarrow u_0$ in $\w$. So we can pass the limit in \eqref{e3.17} as $n \rightarrow \infty$ and get weakly in $\O$	
		$$  \pl u_0 =   -\displaystyle\left(\int\limits_\O\frac{|u_0(y)|^r}{|x-y|^\a}dy\right)|u_0(x)|^{r-2}u_0(x).                                           $$		
		Testing with $u_0 \in \w $,	we have
		$$\|u_0\|^p+[u_0]_r^{2r}= 0 ,           $$
		i.e., $u_0 =0$. Hence $u_n \rightarrow 0$ in $\w\cap C(\overline{\Omega})$ (note that $u_n$ is uniformly bounded in $C^\mu(\overline{\Omega})$). 
	\end{proof}
	\subsection*{Proof of Corollary \ref{c3.2}} 
	
	\begin{proof}
		Proof follows similarly as the proof of Corollary \ref{c3.1} with the same uniform subsolution $\underline{u}$ and the strong supersolution $\bar{u}$ as the unique positive solution to
		\begin{align*}
			\pl u = \l (u^{q-1}+u^{p-1})~\text{in}~\O,
			u=0~\text{in}~\Rn\setminus\O.
		\end{align*}
	\end{proof}
	\section{Singular Case}\label{SC}
	In this section, we consider that $f$ satisfies $(f_5)$. Therefore the problem will look like
	\begin{equation}  \label{e4.1}
		\pl u = \l u^{q-1} -\cq ~\text{in}~ \O,
		u=0 ~\text{in}~ \Rn\setminus\O.
	\end{equation}
	We prove the existence, and uniqueness of weak solution when $-q<\frac{p^2s}{sp-1}-(p-1)$. Precisely, we prove Theorem \ref{t3.3}.
	\begin{proof}
		We first consider the problem 
		\begin{equation}
			\label{e4.2}
			\pl =\l u^{q-1}~\text{in}~\O,
			u=0~\text{in}~\Rn\setminus\O.
		\end{equation}
		By the results of \cite{AGW}, problem \eqref{e4.2} has unique solution $\bar{u} \in \w $ that satisfy 
		\begin{equation}
			C_1d^\delta \leq \bar{u} \leq C_2 d^\beta,
		\end{equation}
		for some $C_1, C_2 >0$ and where $\begin{cases}
			\delta =s, \beta=s-\epsilon~&\text{if}~0\leq q<1,\\
			\delta=\beta=\frac{sp}{p-q}~&\text{if}~q<0.
		\end{cases}$\\
		Next we let $\underline{u}=\epsilon\bar{u}$, where $\epsilon>0$ is chosen small enough so that $\underline{u}$ satisfies
		\begin{equation}
			\pl \underline{u} \leq \l \underline{u}^{q-1}-\int\limits_\O\frac{\bar{u}(y)^r\underline{u}(x)^{r-1}}{|x-y|^\a}dy.
		\end{equation}
		Let $M> |\bar{u}|_\infty$ and define $\hat{f}$ and $T_M$ as in \eqref{e3.2} and \eqref{e3.3} respectively. Consider the problem
		\begin{equation}\label{e4.5}
			\pl u= \hat{f}(x,u)-\displaystyle\int\limits_\O \frac{T_M(u(y))^rT_M(u(x))^{r-1}T'_M(u(x))}{|x-y|^\a}dy~\text{in}~\O,
			u=0 ~\text{in}~\Rn\setminus\O.				
		\end{equation}
		The associated energy functional is given by 
		$$E(u)=\frac{\|u\|^p}{p}-\int\limits_\O\hat{F}(x,u)dx+\frac{[T_M(u)]_r^{2r}}{2r},$$
		where $\hat{F}(x,t)=\displaystyle\int\limits_{0}^t\hat{f}(x,\tau)d\tau.$ We first show that $E$ is G\^{a}teaux differentiable on $\w$. For this, it is sufficient to show that 
		\begin{equation}\label{e4.6}
			\lim\limits_{t \rightarrow 0}\int\limits_\O \frac{\hat{F}(x,u+tv)-\hat{F}(x,u)}{t}dx=\int\limits_\O \hat{f}(x,u)vdx,
		\end{equation}
		for all $v \in \w$. Since $\hat{f}$ is continuous in $\Omega\times \mathbb{R}^+$, $\hat{F}$ is differentiable and so by Mean Value Theorem, we have $$\frac{\hat{F}(x,u+tv)-\hat{F}(x,u)}{t}=\hat{f}(x,u+t\theta v)v\mbox{ in }\Omega$$ for some $\theta \in (0,1)$. Also $\hat{f}(x,u+t\theta v)v\leq \underline{u}^{q-1}v \in L^1(\O)$. Thus, by Lebesgue dominated convergence theorem, we have the equality \eqref{e4.6}. Now by H\"{o}lder, Hardy and Hardy-Littlewood-Sobolev inequalities, the energy functional $E$ is coercive and sequentially weakly lower semicontinuous and so there exist $u_0 \in \w$ that is a global minimizer of $E$ and satisfy weakly in $\O$
		\begin{equation*}
			\pl u_0=\l f(x,u_0)-\int\limits_\O \frac{|T_M(u_0(y))|^r(T_M(u_0(x)))^{r-1}T'_M(u_0(x)) }{|x-y|^\a}dy. 
		\end{equation*}
		Now as in Theorem \ref{t3.1}, we can show that $\underline{u}\leq u_0 \leq \bar{u}$ a.e. in $\O$ and hence $u_0$ is a weak solution of \eqref{e4.1}. Next, we show the uniqueness of the solution. For this, we claim that any weak solution $v$ of \eqref{e4.1} satisfy $\underline{v}\leq v \leq \bar{v}$, where $\bar{v}$ and $\underline{v}$ satisfy \begin{equation}\label{e4.7}
			\pl \bar{v}\geq \l \bar{v}^{q-1},
		\end{equation} 
		and \begin{equation}\label{e4.8}
			\pl \underline{v} \leq \l \underline{v}^{q-1}-\int\limits_\O \frac{v(y)^r\underline{v}^{r-1}(x)}{|x-y|^\a}dy,
		\end{equation}
		respectively weakly in $\O$. As $v$ is a weak solution, so it satisfies weakly in $\O$
		\begin{equation}
			\pl {v}= \l{v}^{q-1}-\int\limits_\O \frac{v(y)^r{v}^{r-1}(x)}{|x-y|^\a}dy.
		\end{equation}
		Since $q<1$, we have 
		\begin{align*}
			\langle \pl v-\pl \bar{v},(v-\bar{v})^+\rangle \leq  \l \int\limits_\O (v^{q-1}-\bar{v}^{q-1})(v-\bar{v})^+\leq 0.
		\end{align*}
		By Proposition \ref{p2.1}, we have $v \leq \bar{v}$. Again, since $q<1$ and $r>1$, we have
		\begin{align*}
			\langle \pl\underline{v}-\pl , \underline{v}-v \rangle \leq& \l \int\limits_\O (\underline{v}^{q-1}-v^{q-1})(\underline{v}-v)^+\\
			&- \int\limits_\O\left(\int\limits_\O\frac{|v(y)|^r}{|x-y|^\a}dy\right)(\underline{v}^{r-1}-v^{r-1})(\underline{v}-v)^+dx\\
			\leq&0,
		\end{align*}
		and so Proposition \ref{p2.1} yields $\underline{v}\leq v$. This proves our claim. Since $v \in L^\infty(\O)$, $\bar{u}$ and $\underline{u}$ satisfy \eqref{e4.7} and \eqref{e4.8} respectively, any weak solution $v$ of \eqref{e4.1} satisfy $C_1 d^\delta \leq v \leq C_2 d^\beta$, for some $C_1,C_2 >0$. The rest of the proof follows similarly as in the proof of Theorem \ref{t3.2} by using Picone type arguments.
	\end{proof}
	\section{Brezis-Niremberg for logistic Choquard}\label{BNP}
	In this section, we consider the problem \eqref{e1.5} 
	and give the proof of Theorem \ref{t5.1}.
	We will prove this theorem in a sequence of Lemmas.
	The associated energy functional in this case is given by 
	\begin{equation*}
		I_\l(u)=\frac{\|u\|^p}{p}-\l \frac{|u|^p}{p}-\frac{|u|^{p_s^\ast}}{p_s^\ast}+\frac{[u]_r^{2r}}{2r}.
	\end{equation*}
	Now, we define 
	\begin{equation}\label{e5.2}
		S = \inf\limits_{u \in W^{s,p}(\Rn)\setminus\{0\}}\frac{\|u\|^p}{|u|_{p_s^\ast}^p},
	\end{equation}
	which is positive by the fractional Sobolev inequality. 
	\begin{defi}
		We say that $(u_n)_{n\in\mathbb N}$ is a Palais-Smale sequence at level $c$, $(PS)_c$ for short, if $J(u_n) \rightarrow c$ in $\Rr$ and $J^\prime(u_n)\rightarrow 0$ in $W^{-s,p^\prime}(\O)$.
	\end{defi}
	\begin{lemma}\label{l5.3}
		Let $1<p<p_s^\ast$, $s \in (0,1)$, $N>sp$, and let $S$ be defined as in \eqref{e5.2}. Assume that $0$ is the only critical point of $I_\l$. Let $(u_n)_{n\in \mathbb N}$ be a $(PS)_c$, where $c \in (-\infty,\frac{s}{N}S^\frac{N}{sp})$. Then, necessarily $c=0$ and $u_n \rightarrow 0$ strongly in $\w$.
	\end{lemma}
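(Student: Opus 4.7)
The plan is to follow the classical Brezis–Nirenberg concentration analysis, with the extra care required by the Choquard term, which turns out to be benign under the hypotheses on $p,s,r,\alpha$.

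\smallskip

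First I would establish boundedness of $(u_n)$ by testing a suitable linear combination. Since the dimension hypothesis $2r<p_s^\ast$ gives $p_s^\ast/(2r)-1>0$ and the ambient assumption $\l<\hat{\l}_1$ (in force in Theorem \ref{t5.1}) yields $\|u_n\|^p-\l|u_n|_p^p\geq (1-\l/\hat{\l}_1)\|u_n\|^p$, the identity
\begin{equation*}
p_s^\ast I_\l(u_n)-\langle I_\l'(u_n),u_n\rangle=\left(\frac{p_s^\ast}{p}-1\right)(\|u_n\|^p-\l|u_n|_p^p)+\left(\frac{p_s^\ast}{2r}-1\right)[u_n]_r^{2r}
\end{equation*}
together with $I_\l(u_n)\to c$ and $\langle I_\l'(u_n),u_n\rangle=o(\|u_n\|)$ produces boundedness in $\w$. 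Passing to a subsequence, $u_n\rightharpoonup u$ in $\w$, $u_n\to u$ a.e. and in $L^\nu(\O)$ for every $\nu<p_s^\ast$. A standard lower semicontinuity plus dominated convergence argument (using the continuity of $\pl$ in the weak sense and the a.e. convergence for the nonlinear terms) shows $I_\l'(u)=0$, so by hypothesis $u\equiv 0$.

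\smallskip

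Next I would show that the Choquard term dissipates along the sequence. By the Hardy--Littlewood--Sobolev inequality, $[u_n]_r^{2r}\leq C\,|u_n|_{tr}^{2r}$ with $t=2N/(2N-\alpha)$. The structural hypothesis $sp^2<(2N-\alpha)p-2r(N-sp)$ is exactly equivalent to $tr<p_s^\ast$, so the compact embedding $\w\hookrightarrow L^{tr}(\O)$ yields $|u_n|_{tr}\to 0$ and thus $[u_n]_r^{2r}\to 0$. Similarly, since $p<p_s^\ast$, $|u_n|_p\to 0$. Substituting these into $\langle I_\l'(u_n),u_n\rangle=o(1)$ gives
\begin{equation*}
\|u_n\|^p-|u_n|_{p_s^\ast}^{p_s^\ast}=o(1).
\end{equation*}

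\smallskip

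Passing once more to a subsequence, $\|u_n\|^p\to\ell$ and $|u_n|_{p_s^\ast}^{p_s^\ast}\to\ell$ for some $\ell\geq 0$. Applying the Sobolev inequality \eqref{e5.2} and passing to the limit gives $\ell\geq S\,\ell^{p/p_s^\ast}$, which forces the dichotomy $\ell=0$ or $\ell\geq S^{N/(sp)}$. Computing the energy limit, using $\frac{1}{p}-\frac{1}{p_s^\ast}=\frac{s}{N}$ together with $|u_n|_p\to 0$ and $[u_n]_r\to 0$, yields
\begin{equation*}
c=\lim_n I_\l(u_n)=\frac{\ell}{p}-\frac{\ell}{p_s^\ast}=\frac{s}{N}\,\ell.
\end{equation*}
The assumption $c<\frac{s}{N}S^{N/(sp)}$ rules out $\ell\geq S^{N/(sp)}$, forcing $\ell=0$, whence $\|u_n\|\to 0$ and $c=0$.

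\smallskip

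The main obstacle is pinning down that the Choquard term is subcritical enough to vanish under weak convergence; this is precisely where the quantitative assumption $sp^2<(2N-\alpha)p-2r(N-sp)$ is used to secure the compact embedding into $L^{tr}$. Once this is in place, the argument reduces to the classical dichotomy between trivial limit and concentration at the Sobolev threshold.
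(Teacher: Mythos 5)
Your proof is correct and follows essentially the same route as the paper: boundedness via the $I_\l(u_n)-\tfrac{1}{p_s^\ast}\langle I_\l'(u_n),u_n\rangle$ combination using $2r<p_s^\ast$ and $\l<\hat{\l}_1$, identification of the weak limit as the trivial critical point, vanishing of the subcritical and Choquard terms, and the dichotomy $\ell=0$ or $\ell\geq S^{N/(sp)}$ ruled out by $c<\tfrac{s}{N}S^{N/(sp)}$ (you even make explicit the compactness of the Choquard term that the paper leaves implicit). One small inaccuracy: the condition $sp^2<(2N-\a)p-2r(N-sp)$ is strictly stronger than, not equivalent to, $tr<p_s^\ast$ — the latter already follows from $r<p_{s,\a}^\ast$ — but this does not affect the argument.
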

	\begin{proof}
		Since $(u_n)_{n\in \mathbb N}$ is a $(PS)_c$, we have that 
		\begin{equation}\label{e5.3}
			I_\l(u_n)=\frac{\|u_n\|^p}{p}-\l \frac{|u_n|_p^p}{p}-\frac{|u_n|_{p_s^\ast}^{p_s^\ast}}{p_s^\ast}+\frac{[u_n]_r^{2r}}{2r}=c+o(1),
		\end{equation} and
		\begin{align}\nonumber\label{e5.4}
			\langle I_\l^\prime(u_n),v\rangle=&\langle \pl u_n,v\rangle-\l \int\limits_\O u_n^{p-1}v-\int\limits_\O u_n^{p_s^\ast-1}v\\
			&+\int\limits_\O\int\limits_\O\frac{|u_n(u)|^ru_n( x)^{r-1}v(x)}{|x-y|^\a}dxdy= o(\|v\|), \forall v \in \w
		\end{align}
		as $n \rightarrow \infty$. In particular, there exist $C>0$ such that $I_\l(u_n)\leq C$ and $\langle I_\l^\prime(u_n),v\rangle \leq C\|v\|$ for all $v \in \w$. Now, 
		\begin{align*}
			C(1+\|u_n\|)\geq &I_\l(u_n)-\frac{1}{p_s^\ast}\langle I_\l^\prime(u_n),u_n\rangle\\
			=& \left( \frac{1}{p}-\frac{1}{p_s^\ast}\right)\|u_n\|^p 
			+\l\left(\frac{1}{p_s^\ast}-\frac{1}{p}\right)|u_n|_{p}^{p} +\left(\frac{1}{2r}-\frac{1}{p_s^\ast}\right)[u_n]_r^{2r}\\
			\geq& 	\left( \frac{1}{p}-\frac{1}{p_s^\ast}\right)\left(1-\frac{\l}{\hat{\l}_1}\right)\|u_n\|^p.	
		\end{align*}
		This implies that $(u_n)_{n\in \mathbb N}$ is a bounded sequence in $\w$. By reflexivity and the compact embedding, $\w \hookrightarrow L^\nu(\O)$ for all $\nu \in [1,p_s^\ast)$, we can find a subsequence, still denoted by $(u_n)_{n\in \mathbb N}$, such that $u_n \rightharpoonup u $ weakly in $\w$, pointwisely a.e. in $\Omega$ and $ u_n \rightarrow u$ in $L^\nu(\O)$ for all $\nu \in [1,p_s^\ast)$. Noting that, $(u_n(x)-u_n(y))^{p-1}/|x-y|^{(N+sp)/p^{\prime}}$ is bounded in $L^{p^\prime}(\mathbb{R}^{2n})$ and converges to $(u(x)-u(y))^{p-1}/|x-y|^{(N+sp)/p^{\prime}}$ a.e. in $\mathbb{R}^{2n}$, and $(v(x)-v(y))/|x-y|^{(N+sp)/p} \in L^{p}(\mathbb{R}^{2n})$, we conclude that
		\begin{equation*}
			\langle \pl u_n,v\rangle \rightarrow 	\langle \pl u,v\rangle,  ~\text{for all }~v \in \w,
		\end{equation*} for a further subsequence. Similarly,
		\begin{equation*}
			\int\limits_\O u_n^{q-1}vdx \rightarrow \int\limits_\O u^{q-1}vdx, ~\text{for all }~v \in \w,~\text{for}~q=p,p_s^\ast.
		\end{equation*}
		Also, using Hardy-Littlewood-Sobolev inequality, we see that 
		\begin{equation*}
			\int\limits_\O\int\limits_\O\frac{|u_n(y)|^r|u_n(x)|^{r-1}v(x)}{|x-y|^\a}\rightarrow   	\int\limits_\O\int\limits_\O\frac{|u(y)|^r|u(x)|^{r-1}v(x)}{|x-y|^\a},                   ~\text{for all }~v \in \w.
		\end{equation*}
		So passing to the limit in \eqref{e5.4}, we see that $u$ is a weak solution of problem \eqref{e1.5} and so $u$ is a critical point of $I_\l$.
		Since $0$ is the only critical point of $I_\l$, we have $u=0$. Testing \eqref{e5.4} with $u_n$, we get
		\begin{align}\label{e5.6}
			\|u_n\|^p=&|u_n|_{p_s^\ast}^{p_s^\ast}+o(1)\\ \nonumber
			\leq & \frac{\|u_n\|^{p_s^\ast}}{S^\frac{p_s^\ast}{p}}+o(1),
		\end{align}
		i.e., \begin{equation}\label{e5.7}
			\|u_n\|^p(S^\frac{p_s^\ast}{p}-\|u_n\|^{p_s^\ast-p})\leq o(1).		
		\end{equation}
		Again from \eqref{e5.3}, we have
		\begin{align*}
			c=&\frac{\|u_n\|^p}{p}-\frac{|u_n|_{p_s^\ast}^{p_s^\ast}}{p_s^\ast}+o(1)\\
			=&\frac{s}{N}\|u_n\|^p +o(1)~\text{by}~\eqref{e5.6}
		\end{align*}
		i.e.,\begin{equation}\label{e5.8}
			\limsup_{n}\|u_n\|^p \leq \frac{N}{s}c <S^\frac{N}{sp}
		\end{equation}
		Combining \eqref{e5.7} and \eqref{e5.8}, we get $u_n \rightarrow 0$ in $\w$ and so $c=0$.
	\end{proof}
	Our next step is to show that $I$ has a critical level $c$ that satisfies $0<c<\frac{s}{N}S^\frac{N}{sp}$. For this, we make use of the auxiliary estimates done in \cite{MPSY}.	
	We first recall important results required. 	
	\begin{prop}\cite[Proposition 3.1]{BMS}
		Let $p\in (1,\infty), s \in (0,1), N>sp$, and let $S$ be defined as in \eqref{e5.2}. Then
		\begin{itemize}
			\item [(a)]There exist a minimizer of $S$;
			\item [(b)]For every minimizer $U$, there exist $x_0 \in \Rn$ and a constant sign monotone function $u:\Rr \rightarrow \Rr$ such that $U(x)=u(|x-x_0|)$;
			\item[(c)] Every minimizer $U$ is the weak solution of 
			$$\pl u = \l_U u^{p_s^\ast-1},$$ for some $\l_U>0$, i.e.,
			$$\int\limits_{\mathbb{R}^{2n}}\frac{(U(x)-U(y))^{p-1}(v(x)-v(y))}{|x-y|^{N+ps}}dxdy=\l_U\int\limits_\Rn U^{p_s^\ast-1}vdxdy~ \forall v \in W^{s,p}(\Rn).$$
		\end{itemize}\qed
	\end{prop}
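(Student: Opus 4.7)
The plan is to establish the three parts in turn, following the classical pattern for critical Sobolev-type inequalities adapted to the fractional $p$-Laplacian. The overall architecture combines concentration-compactness (to handle the loss of compactness in the critical embedding $W^{s,p}(\Rn)\hookrightarrow L^{p_s^\ast}(\Rn)$), rearrangement inequalities (to force radial symmetry), and the Lagrange multiplier principle (to derive the Euler-Lagrange equation).

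For part (a), I would start with a minimizing sequence $(u_n)\subset W^{s,p}(\Rn)\setminus\{0\}$ for the quotient $\|u\|^p/|u|_{p_s^\ast}^p$. Using the two invariances of this quotient, namely translations $u\mapsto u(\cdot-x_0)$ and dilations $u\mapsto t^{(N-sp)/p}u(t\cdot)$, I can normalize so that $|u_n|_{p_s^\ast}=1$ and so that the Levy-concentration function of $|u_n|^{p_s^\ast}$ takes a prescribed value on a fixed ball. The sequence is then bounded in $W^{s,p}(\Rn)$; pass to a weakly convergent subsequence with limit $U$. The main task is to rule out vanishing and dichotomy via the concentration-compactness principle of P.L. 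Lions adapted to the fractional setting. Vanishing is excluded by the concentration normalization, and dichotomy is precluded by the strict subadditivity of $S$, which in turn is a consequence of the fact that both $\|\cdot\|^p$ and $|\cdot|_{p_s^\ast}^p$ scale identically under the dilation symmetry. Therefore $u_n\to U$ strongly in $L^{p_s^\ast}$, so $|U|_{p_s^\ast}=1$; weak lower semicontinuity of $\|\cdot\|^p$ then yields $\|U\|^p\le S$, and equality must hold, proving that $U$ attains $S$.

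For part (b), the key ingredient is the fractional Polya-Szego inequality: for every $u\in W^{s,p}(\Rn)$ the symmetric decreasing rearrangement $u^\ast$ satisfies $\|u^\ast\|\le \|u\|$ together with $|u^\ast|_{p_s^\ast}=|u|_{p_s^\ast}$. Applied to a minimizer $U$ (and after replacing $U$ by $|U|$, which is again a minimizer since $\||U|\|\le \|U\|$), this forces $\|U^\ast\|=\|U\|$. The equality case in the fractional Polya-Szego inequality, established by Frank-Seiringer and coauthors, then implies that $U$ coincides (up to a translation by some $x_0\in\Rn$) with its own symmetric decreasing rearrangement, which has exactly the radial monotone profile $U(x)=u(|x-x_0|)$ claimed. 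For part (c), since the Rayleigh quotient defining $S$ is $C^1$ on $W^{s,p}(\Rn)\setminus\{0\}$ and $U$ is a critical point, the Lagrange multiplier rule yields $\pl U=\lambda_U U^{p_s^\ast-1}$ weakly in $\Rn$; testing against $U$ itself identifies $\lambda_U=S\,|U|_{p_s^\ast}^{p-p_s^\ast}>0$.

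The hardest step I expect will be the concentration-compactness analysis in (a): in the fractional $p$-Laplacian setting the Gagliardo seminorm is genuinely nonlocal and interacts nontrivially with truncations to balls, so the careful splitting needed to exclude dichotomy requires replacing pointwise cutoffs by smooth cutoffs and controlling the resulting cross-terms in the double integral. The equality case of the fractional Polya-Szego inequality invoked in (b) is itself a nontrivial rigidity result, but it is by now a standard reference; once it is in hand, parts (b) and (c) are short.
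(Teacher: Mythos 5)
The paper offers no proof of this proposition: it is imported verbatim from \cite[Proposition 3.1]{BMS} and stated with a \textit{qed} mark, so there is nothing internal to compare against. Your outline (concentration--compactness with translation/dilation normalization for existence, reduction to $|U|$ plus the rigidity case of the fractional P\'olya--Szeg\H{o} inequality for the radial monotone profile, and the Lagrange multiplier rule with $\l_U$ identified by testing against $U$) is the standard and correct route, and is essentially the argument carried out in the cited reference and the works it relies on; the only caveat is that it remains a sketch whose two hard ingredients (Lions' limit-case lemma in the nonlocal setting and the Frank--Seiringer equality case) are themselves substantial cited results rather than steps you supply.
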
	
	Now let us fix a radially symmetric nonnegative decreasing minimizer $U=U(r)$ of $S$. Multiplying $U$ by a positive constant if necessary, we may assume that 
	\begin{equation}\label{e5.12}
		\pl U = U^{p_s^\ast-1}.
	\end{equation}
	Testing \eqref{e5.12} with $U$ and using \eqref{e5.2} shows that 
	\begin{equation}\label{e5.13}
		\|U\|^p=|U|_{p_s^\ast}^{p_s^\ast}=S^\frac{N}{sp}
	\end{equation}
	Note that , for any $\epsilon >0$, the function
	\begin{equation}\label{e5.14}
		U_\epsilon(x)=\frac{1}{\epsilon^{(N-sp)/p}}U\left(\frac{|x|}{\epsilon}\right)
	\end{equation}
	is also a minimizer of $S$ satisfying \eqref{e5.12} and \eqref{e5.13}, so after eventually rescaling we may assume that $U(0)=1$. From now on, $U$ will denote such a normalized minimizer and $U_\epsilon$ will denote the associated family of minimizers given by \eqref{e5.14}. We have the following asymptotic estimates for $U$.
	\begin{lemma}\cite[Lemma 2.2]{MPSY}\label{l5.4}
		There exist constants $C_1,C_2>0$ and $\kappa >0$ such that for all $z\geq 1$, 
		\begin{equation}
			\frac{C_1}{z^{(N-sp)/(p-1)}}\leq U(z)\leq \frac{C_2}{z^{(N-sp)/(p-1)}},
		\end{equation}
		and \begin{equation}
			\frac{U(\kappa z)}{U(z)}\leq \frac{1}{2}.
		\end{equation}\qed
	\end{lemma}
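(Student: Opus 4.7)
My approach is to exploit the fact that $U$ is a positive radial decreasing solution of $\pl U = U^{p_s^\ast-1}$ on $\Rn$ with $U(0)=1$, and that for large $|x|$ the right-hand side decays so rapidly that $U$ should behave like the fundamental solution of $\pl$. The scaling heuristic that pins down the decay rate is the elementary computation showing that radial power functions $|x|^{-\sigma}$ satisfy $\pl(|x|^{-\sigma}) = 0$ in $\Rn\setminus\{0\}$ precisely when $\sigma = (N-sp)/(p-1)$; this is exactly the exponent that appears in the statement.

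For the upper bound, I would first record that $U \in L^\infty(\Rn)$ by radial monotonicity with $U(0)=1$, and that $U(r) \to 0$ as $r \to \infty$ since $U \in L^{p_s^\ast}(\Rn)$. The strategy is then to compare $U$ with a barrier of the form $V(x) = A|x|^{-(N-sp)/(p-1)}$, smoothly modified near the origin so as to be an admissible function. Choosing $R$ large enough ensures that in $\Rn \setminus B_R$ the quantity $\pl V$ dominates $U^{p_s^\ast-1}$ (using that $V$ is a perturbation of the fundamental solution and that $U^{p_s^\ast-1}$ already has a rough decay once one knows $U \to 0$; a bootstrap then sharpens the rough decay into the target one), and choosing $A$ large enough gives $V \geq U$ on $\partial B_R$. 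The weak comparison principle, in the spirit of Proposition \ref{p2.1} but applied on the exterior domain $\Rn \setminus B_R$, yields $U \leq V$ there. The lower bound is proved by the symmetric sub-barrier argument: the same power function, multiplied by a sufficiently small constant, plays the role of a sub-solution on an annulus, and the strong minimum principle guarantees the positivity of $U$ on the inner sphere required to start the comparison.

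Granting the two-sided asymptotic, the second claim is immediate: for $z \geq 1$ and $\kappa \geq 1$,
\[
\frac{U(\kappa z)}{U(z)} \leq \frac{C_2(\kappa z)^{-(N-sp)/(p-1)}}{C_1\, z^{-(N-sp)/(p-1)}} = \frac{C_2}{C_1}\,\kappa^{-(N-sp)/(p-1)},
\]
and one picks $\kappa$ so large that $(C_2/C_1)\kappa^{-(N-sp)/(p-1)} \leq 1/2$.

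The main obstacle lies in the barrier construction. In the linear case $p=2$, the Riesz kernel $|x|^{-(N-2s)}$ is an explicit fundamental solution and the computation of $\pl V$ reduces to a closed-form convolution estimate; for general $p$ no such explicit kernel is available, so one has to argue perturbatively. The delicate point is controlling the nonlocal tails in the definition \eqref{fl} of $\pl$ when evaluating $\pl V$ on the truncated power function: the leading contribution vanishes at the fundamental-solution exponent, so one must extract the correct sign from lower-order terms produced by the truncation near the origin and by the interaction with the remainder $U-V$. This is where the technical weight of the argument in \cite{MPSY} is concentrated.
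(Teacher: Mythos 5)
This lemma is not proved in the paper at all: it is imported verbatim from \cite[Lemma 2.2]{MPSY}, whose proof in turn rests on the decay analysis of extremals in \cite{BMS}. Your sketch reproduces the architecture of that argument faithfully — a rough preliminary decay from $U\in L^{p_s^\ast}$ plus radial monotonicity, a bootstrap via comparison with power-type barriers on exterior domains at the exponent $(N-sp)/(p-1)$, a symmetric sub-barrier for the lower bound, and the doubling estimate $U(\kappa z)/U(z)\le 1/2$ deduced correctly from the two-sided bound (valid since $N>sp$ makes the exponent positive and $z\ge 1$ forces $\kappa z\ge 1$). The one point where your write-up overstates what is easy is the claim that it is an ``elementary computation'' that $\pl(|x|^{-\sigma})=0$ exactly at $\sigma=(N-sp)/(p-1)$: for $p\ne 2$ there is no closed-form kernel, the relevant fact is the sign of a function $\Phi(\sigma)$ defined through a singular integral, and the references deliberately work with \emph{truncated} powers $\min\{|x|^{-\sigma},R^{-\sigma}\}$ whose fractional $p$-Laplacian has a sign that can be estimated for $|x|$ large; establishing that sign is the entire technical content of the cited lemmas. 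You identify this as the crux but do not carry it out, so as a self-contained proof the attempt has a gap precisely there — though it is the same gap the paper itself leaves by citing \cite{MPSY}, and everything you do prove (in particular the reduction of the second inequality to the first) is correct.
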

	For any $\epsilon, \delta >0$, let
	\begin{equation}
		m_{\epsilon, \delta}= \frac{U_\epsilon(\delta)}{U_\epsilon(\delta)-U_\epsilon(\kappa \delta)},
	\end{equation}
	where $\kappa$ is as in Lemma \ref{l5.4}. Let 
	\begin{equation}
		g_{\epsilon,\delta}(t)=\begin{cases}
			0,~&\text{if}~0\leq t\leq U_\epsilon(\kappa \delta)\\
			m_{\epsilon,\delta}^p(t-U_\epsilon(\kappa \delta)),~&\text{if}~U_\epsilon(\kappa\delta)\leq t\leq U_\epsilon(\delta)\\
			t+U_\epsilon(\delta)(m_{\epsilon,\delta}^{p-1}-1),~&\text{if}~t \geq U_\epsilon(\delta),
		\end{cases}
	\end{equation}
	and let \begin{equation}
		G_{\epsilon,\delta}(t)= \int\limits_{0}^tg_{\epsilon,\delta}^\prime(\tau)d\tau=\begin{cases}
			0,~&\text{if}~0\leq t\leq U_\epsilon(\kappa \delta)\\
			m_{\epsilon,\delta}(t-U_\epsilon(\kappa \delta)),~&\text{if}~U_\epsilon(\kappa\delta)\leq t\leq U_\epsilon(\delta)\\
			t,~&\text{if}~t \geq U_\epsilon(\delta).
		\end{cases}
	\end{equation}
	The functions $g_{\epsilon,\delta}$ and $G_{\epsilon,\delta}$ are nondecreasing and absolutely continuous. Consider the radially symmetric nonincreasing function $$u_{\epsilon,\delta}(z)=G_{\epsilon,\delta}(U_\epsilon(z)),$$
	which satisfies \begin{equation}\label{e5.17}
		u_{\epsilon,\delta}(z)=\begin{cases}
			U_\epsilon(z),~&\text{if}~z\leq \delta\\
			0,~&\text{if}~z\geq \kappa \delta.
		\end{cases}
	\end{equation}
	We have the following estimates for $u_{\epsilon,\delta},$
	\begin{lemma}\label{l5.6}
		There exist a constant $C>0$ such that for any $\epsilon \leq \frac{\delta}{2},$
		\begin{equation}\label{e5.21}
			\|u_{\epsilon,\delta}\|^p\leq S^\frac{N}{sp}+C\left(\frac{\epsilon}{\delta}\right)^{(N-sp)/(p-1)},
		\end{equation}
		\begin{equation}\label{e5.22}
			|u_{\epsilon,\delta}|^p \geq \begin{cases}
				\frac{1}{C} \epsilon^{sp}\log\left(\frac{\delta}{\epsilon}\right),~&\text{if}~N=sp^2\\
				\frac{1}{C}\epsilon^{sp},~&\text{if}~N>sp^2,
			\end{cases}
		\end{equation}
		\begin{equation}\label{e5.23}
			|u_{\epsilon,\delta}|_{p_s^\ast}^{p_s^\ast}\geq S^\frac{N}{sp}-C\left(\frac{\epsilon}{\delta}\right)^{N/(p-1)}
		\end{equation}
		\begin{equation}\label{e5.24}
			[u_{\epsilon,\delta}]_r^{2r}\leq C\epsilon^{2N-\alpha-2r(N-sp)/p}.
		\end{equation}
	\end{lemma}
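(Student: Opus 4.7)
The plan is to treat the three estimates \eqref{e5.21}--\eqref{e5.23} as in the model paper \cite{MPSY}, and then handle the genuinely new Choquard estimate \eqref{e5.24} by a Hardy--Littlewood--Sobolev reduction combined with the explicit scaling of $U_\epsilon$. First, for \eqref{e5.21} I would exploit that $G_{\epsilon,\delta}$ is a nondecreasing, absolutely continuous, convex-type truncation with $G_{\epsilon,\delta}^{\prime}\leq m_{\epsilon,\delta}$; applying Lemma \ref{3} with $f=G_{\epsilon,\delta}$ (together with the one-to-one correspondence between $u_{\epsilon,\delta}$ and the radial profile) reduces the Gagliardo seminorm of $u_{\epsilon,\delta}$ to the seminorm of $U_\epsilon$ on $B_\delta$ plus a remainder supported in $B_{\kappa\delta}\setminus B_\delta$. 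Using \eqref{e5.13} and the upper/lower estimates on $U$ from Lemma \ref{l5.4}, one then checks that the remainder is $O((\epsilon/\delta)^{(N-sp)/(p-1)})$, yielding \eqref{e5.21}. The scaling invariance $\|U_\epsilon\|^p = \|U\|^p = S^{N/sp}$ provides the main term.

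For \eqref{e5.22} and \eqref{e5.23} I would use \eqref{e5.17}: on $B_\delta$ we have $u_{\epsilon,\delta}=U_\epsilon$, so both $|u_{\epsilon,\delta}|_p^p$ and $|u_{\epsilon,\delta}|_{p_s^\ast}^{p_s^\ast}$ reduce to integrals of $U_\epsilon^p$ and $U_\epsilon^{p_s^\ast}$ over $B_\delta$ (plus the bounded contribution from the annulus for \eqref{e5.23}). Changing variable $z=x/\epsilon$ turns these into integrals of $U^p$ and $U^{p_s^\ast}$ over $B_{\delta/\epsilon}$. For \eqref{e5.22}, $U(z)\sim z^{-(N-sp)/(p-1)}$ at infinity by Lemma \ref{l5.4}, so $U^p$ is integrable precisely when $N>sp^2$, and one obtains the two regimes from the standard dyadic splitting; the logarithm in the case $N=sp^2$ arises from the borderline integrability at infinity. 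For \eqref{e5.23}, the scaling and \eqref{e5.13} give the main term $S^{N/sp}$, and the tail $\int_{\mathbb{R}^N\setminus B_{\delta/\epsilon}}U^{p_s^\ast}$ is controlled by $(\epsilon/\delta)^{N/(p-1)}$ using the sharp decay in Lemma \ref{l5.4}.

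For the new estimate \eqref{e5.24}, I would first apply Hardy--Littlewood--Sobolev (Proposition \ref{hls}) with $g=h=u_{\epsilon,\delta}^r$ and $m=n=2N/(2N-\alpha)$ to get
\begin{equation*}
[u_{\epsilon,\delta}]_r^{2r}\leq C\,|u_{\epsilon,\delta}|_{q}^{2r},\qquad q=\frac{2Nr}{2N-\alpha}.
\end{equation*}
Since $0\leq u_{\epsilon,\delta}\leq U_\epsilon$, the change of variable $z=x/\epsilon$ gives
\begin{equation*}
|u_{\epsilon,\delta}|_q^q\leq \epsilon^{N-q(N-sp)/p}\int_{\mathbb{R}^N}U(z)^q\,dz,
\end{equation*}
and the integral is finite because the hypothesis $sp^2<(2N-\alpha)p-2r(N-sp)$ translates (after elementary manipulation) into $q(N-sp)/(p-1)>N$, which is exactly the integrability threshold for $U^q$ at infinity given by Lemma \ref{l5.4}. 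Raising to the power $2r/q$ and simplifying the exponent yields $2N-\alpha-2r(N-sp)/p$, which is \eqref{e5.24}.

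The only delicate step is the remainder bound in \eqref{e5.21}, which I expect to be the main obstacle: one must track the seminorm contribution coming from the annulus $B_{\kappa\delta}\setminus B_\delta$ (where $G_{\epsilon,\delta}$ is genuinely truncating) against the long-range interaction with $\mathbb{R}^N\setminus B_{\kappa\delta}$, and obtain exactly the decay rate $(\epsilon/\delta)^{(N-sp)/(p-1)}$. This is handled by splitting the double integral defining $\|u_{\epsilon,\delta}\|^p-\|U_\epsilon\|^p$ into near and far regions, invoking Lemma \ref{l5.4} in each, and using $m_{\epsilon,\delta}=1+O((\epsilon/\delta)^{(N-sp)/(p-1)})$, which itself follows from the two-sided bounds on $U$ at $z=\delta/\epsilon$ and $z=\kappa\delta/\epsilon$.
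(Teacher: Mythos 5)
Your proposal follows essentially the same route as the paper. For \eqref{e5.21}--\eqref{e5.23} the paper simply invokes \cite[Lemma 2.7]{MPSY}, and your sketch is a faithful outline of the argument given there; for the new estimate \eqref{e5.24} both you and the paper apply the Hardy--Littlewood--Sobolev inequality with $m=n=2N/(2N-\a)$, bound $u_{\epsilon,\delta}$ by $U_\epsilon$ on $B(0,\kappa\delta)$, and rescale, arriving at the same exponent $2N-\a-2r(N-sp)/p$.

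There is, however, one concrete flaw in the justification you supply for the final step. You claim that the hypothesis $sp^2<(2N-\a)p-2r(N-sp)$ of Theorem \ref{t5.1} ``translates into'' the integrability threshold $q(N-sp)/(p-1)>N$ for $q=2Nr/(2N-\a)$. It does not: that threshold is equivalent to the \emph{lower} bound $2r(N-sp)>(2N-\a)(p-1)$, whereas the hypothesis is the \emph{upper} bound $2r(N-sp)<(2N-\a)p-sp^2$; these are independent conditions. For instance $N=3$, $s=1/2$, $p=2$, $\a=1$, $r=1$ satisfies all the standing assumptions and the hypothesis of Theorem \ref{t5.1}, yet $U^q=U^{6/5}\sim|z|^{-12/5}$ is not integrable at infinity. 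In that regime $\int_{B(0,\kappa\delta/\epsilon)}U^q\,dz$ grows like $(\delta/\epsilon)^{N-q(N-sp)/(p-1)}$, and the scaling argument only yields $[u_{\epsilon,\delta}]_r^{2r}\leq C_\delta\,\epsilon^{2r(N-sp)/(p(p-1))}$, which is strictly weaker than \eqref{e5.24}. To be fair, the paper's own proof has the same unstated assumption (its bound $\int_{B(0,\kappa\delta)}U_\epsilon^{q}\,dz\leq C\epsilon^{N-q(N-sp)/p}$ is valid only when $U^q\in L^1(\Rn)$), so this is a gap shared with the source rather than a genuine divergence from it; but since you explicitly offered the implication as a justification, it needs repair --- either add $2r(N-sp)>(2N-\a)(p-1)$ as a hypothesis, or treat the borderline and non-integrable cases separately (as MPSY do for $|u_{\epsilon,\delta}|_p^p$) and verify that the weaker rate still beats $\epsilon^{sp}$ in Lemma \ref{l5.7}.
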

	\begin{proof}
		Estimates \eqref{e5.21}, \eqref{e5.22}, and \eqref{e5.23} are already done in \cite[Lemma 2.7]{MPSY}. We shall establish estimate \eqref{e5.24}. By Hardy-Littlewood-Sobolev inequality, we have 
		\begin{equation}\label{e5.25}
			[u_{\epsilon,\delta}]_r^{2r}\leq C |u_{\epsilon,\delta}|_{\frac{2Nr}{2N-\a}}^{(2N-\a)/N}.
		\end{equation}
		Now, \begin{align*}
			\int\limits_\O u_{\epsilon,\delta}(z)^\frac{2Nr}{2N-\a}dz=\int\limits_{B(0,\kappa\delta)}U_\epsilon(z)^\frac{2Nr}{2N-\a}(z)dz\leq C\epsilon^{N-\frac{2Nr(N-sp)}{(2N-\a)p}},
		\end{align*}
		and so \eqref{e5.25} implies 
		\begin{equation*}
			[u_{\epsilon,\delta}]_r^{2r} \leq C \epsilon^{2N-\alpha-2r(N-sp)/p}, 
		\end{equation*}
		as required.
	\end{proof}
	In the next lemma, we show the existence of a Palais-Smale sequence at a level $c$ that satisfies $0<c<\frac{s}{N}S^\frac{N}{sp}$.
	\begin{lemma}\label{l5.7}
		Under the assumptions of Theorem \ref{t5.1}, there exist a $(PS)_c$ where $0<c<\frac{s}{N}S^\frac{N}{sp}$.
	\end{lemma}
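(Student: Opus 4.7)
The plan is to apply the Ambrosetti--Rabinowitz mountain pass theorem to $I_\l$, producing a $(PS)_c$ sequence at the mountain pass level, and then to verify the strict upper bound $c < \frac{s}{N}S^{N/(sp)}$ by testing with the concentrating family $u_{\epsilon,\delta}$ of Lemma~\ref{l5.6}.

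First I would establish the mountain pass geometry. Using the variational definition of $\hat{\l}_1$ in \eqref{ev} and the Sobolev embedding, for every $u \in \w$ one has
$$I_\l(u) \;\geq\; \frac{1}{p}\left(1 - \frac{\l}{\hat{\l}_1}\right)\|u\|^p \;-\; \frac{1}{p_s^\ast\,S^{p_s^\ast/p}}\|u\|^{p_s^\ast},$$
since the Choquard term is nonnegative and can be discarded. Because $\l < \hat{\l}_1$, this gives constants $\rho, \eta > 0$ with $I_\l \geq \eta$ on the sphere $\{\|u\| = \rho\}$. On the other hand, for any nonzero $u_0 \in \w_+$ the inequalities $p < 2r < p_s^\ast$ ensure $I_\l(tu_0) \to -\infty$ as $t \to +\infty$, so there exists $e \in \w$ with $\|e\| > \rho$ and $I_\l(e) < 0$. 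The mountain pass theorem (in its form not requiring any compactness condition) then yields a $(PS)_c$ sequence at
$$c \;=\; \inf_{\gamma \in \Gamma} \max_{t \in [0,1]} I_\l(\gamma(t)) \;\geq\; \eta > 0,$$
where $\Gamma = \{\gamma \in C([0,1],\w) : \gamma(0)=0,\ \gamma(1)=e\}$.

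The main task is to verify $c < \frac{s}{N}S^{N/(sp)}$. I would fix $\delta > 0$ so small that $B(0,\kappa\delta) \Subset \O$, select $T > 0$ with $I_\l(Tu_{\epsilon,\delta}) < 0$, and use the path $t \mapsto tTu_{\epsilon,\delta}$ to obtain $c \leq \sup_{t \geq 0} I_\l(tu_{\epsilon,\delta})$. Locating the maximizer $t_\epsilon$ of the classical two-term expression $t^p\|u_{\epsilon,\delta}\|^p/p - t^{p_s^\ast}|u_{\epsilon,\delta}|_{p_s^\ast}^{p_s^\ast}/p_s^\ast$ and invoking \eqref{e5.21} and \eqref{e5.23}, that portion of the supremum is bounded above by $\frac{s}{N}S^{N/(sp)} + C(\epsilon/\delta)^{(N-sp)/(p-1)}$. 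Substituting $t_\epsilon$ into the remaining two terms of $I_\l$ yields a negative linear contribution of order $\l\epsilon^{sp}$ via \eqref{e5.22} (using $N > sp^2$) and a nonnegative Choquard contribution of order $\epsilon^{2N-\a-2r(N-sp)/p}$ via \eqref{e5.24}.

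The main obstacle is the bookkeeping of these four competing contributions. Under the hypothesis $sp^2 < \min\{N, (2N-\a)p - 2r(N-sp)\}$, both positive exponents $(N-sp)/(p-1)$ and $2N-\a-2r(N-sp)/p$ strictly exceed $sp$, so that for $\epsilon > 0$ small enough the negative $\l$-term dominates the two positive corrections, yielding the strict inequality $c < \frac{s}{N}S^{N/(sp)}$. The novelty compared with the classical Brezis--Nirenberg analysis lies in the control of the Choquard correction $[u_{\epsilon,\delta}]_r^{2r}$; the second half of the hypothesis on $sp^2$ is precisely what makes this term of strictly higher order in $\epsilon$ than $\epsilon^{sp}$, and is therefore unavoidable in the present doubly nonlocal setting.
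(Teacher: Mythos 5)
Your proposal is correct and follows essentially the same route as the paper: mountain-pass geometry from the $\hat{\l}_1$-coercivity estimate, a path through a large multiple of $u_{\epsilon,\delta}$, and the four-term comparison in which the hypothesis $sp^2<\min\{N,(2N-\a)p-2r(N-sp)\}$ forces both correction exponents $(N-sp)/(p-1)$ and $2N-\a-2r(N-sp)/p$ to exceed $sp$, so the $-C\l\epsilon^{sp}$ term wins for small $\epsilon$. The only cosmetic difference is that the paper takes $t_\epsilon$ to be the global maximizer of the full upper bound $g_\epsilon(t)$ for $I_\l(tu_{\epsilon,\delta})$ (using that $t_\epsilon$ remains bounded) rather than of the two leading terms alone, but the bookkeeping is otherwise identical.
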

	\begin{proof}
		We have,\begin{align*}
			I_\l(u) =& \frac{\|u\|^p}{p}-\l \frac{|u|_p^p}{p}-\frac{|u|_{p_s^\ast}^{p_s^\ast}}{p_s^\ast}+\frac{[u]_r^{2r}}{2r}\\
			\geq&\frac{1}{p}\left(1-\frac{\l}{\hat{\l}_1}\right)\|u\|^p-\frac{1}{p_s^\ast S^\frac{p_s^\ast}{p}}\|u\|^{p_s^\ast},
		\end{align*}
		so the origin is a strict local minimizer. Fix $\delta>0$ so small that $B_{\kappa \delta}(0)\subset \O$ so that supp $u_{\epsilon,\delta}\subset \O$ by \eqref{e5.17}. Noting that 
		\begin{align*}
			I_\l(\zeta u_{\epsilon,\delta})= \frac{\zeta^{p}}{p}\|u_{\epsilon,\delta}\|^p-\l \frac{\zeta^p}{p}|u_{\epsilon,\delta}|_p-\frac{\zeta^{p_s^\ast}}{p_s^\ast}{|u_{\epsilon,\delta}|_{p_s^\ast}^{p_s^\ast}}+\frac{\zeta^{2r}}{2r}[u]_r^{2r} \rightarrow -\infty~\text{as}~\zeta \rightarrow \infty,
		\end{align*}
		we fix $\zeta_0$ large enough so that $I_\l(\zeta_0 u_{\epsilon,\delta})<0$.
		Then let 
		\begin{equation*}
			\Gamma = \{\gamma \in C([0,1],\w):\gamma(0)=0,\gamma(1)=\zeta_0 u_{\epsilon,\delta}\},
		\end{equation*}
		and set
		\begin{equation*}
			c :=\inf\limits_{\gamma \in \Gamma}\max\limits_{t \in [0,1]} I_\l(\gamma(t))>0.
		\end{equation*}
		Next to show that $c<\frac{s}{N}S^\frac{N}{sp}$, it is sufficient to show that $\sup\limits_{t>0} I_\l(t u_{\epsilon,\delta})<\frac{s}{N}S^\frac{N}{sp}$. We choose $\epsilon$ small enough such that $(S^\frac{N}{sp}-C\left(\frac{\epsilon}{\delta}\right)^\frac{N}{p-1})>0$, where $C$ is obtained in Lemma \ref{l5.6}. We have,
		\begin{align*}
			I_\l(tu_{\epsilon,\delta})=&\frac{t^p}{p}\|u_{\epsilon,\delta}\|^p-\l\frac{t^p}{p}|u_{\epsilon,\delta}|_p^p-\frac{t^{p_s^\ast}}{p_s^\ast}|u_{\epsilon,\delta}|_{p_s^\ast}^{p_s^\ast}+\frac{t^{2r}}{2r}[u]_r^{2r}\\
			\leq & \frac{t^p}{p}\left[S^\frac{N}{sp}+C\left(\frac{\epsilon}{\delta}\right)^\frac{N-sp}{p-1}\right]-\frac{\l t^p}{Cp}\epsilon^{sp}-\frac{t^{p_s^\ast}}{p_s^\ast}\left[S^\frac{N}{sp}-C\left(\frac{\epsilon}{\delta}\right)^\frac{N}{p-1} \right] \\
			&+\frac{t^{2r}}{2r}C\epsilon^{2N-\a-2r(N-sp)/p}\\
			=& g_\epsilon(t)~\text{(say)}.
		\end{align*}
		Note that $g_\epsilon(0)=0$, $g_\epsilon(t)>0$ for $t$ small and $g_\epsilon(t)<0$ for $t$ large. If $t_\epsilon$ is the global maximum of $g_\epsilon$ in $(0,\infty)$, we obtain $g_\epsilon^\prime(t_\epsilon)=0$ and $g_\epsilon(t)\leq g_\epsilon(t_\epsilon)$, $\forall t\geq 0$. Also $g_\epsilon^\prime(t_\epsilon)=0$ implies $\limsup_{\epsilon\rightarrow0}t_\epsilon <\infty$. Hence,
		\begin{align*}
			g_\epsilon(t)\leq g_\epsilon(t_\epsilon) &\leq S^\frac{N}{sp}\left(\frac{t_\epsilon^p}{p}-\frac{t_\epsilon^{p_s^\ast}}{p_s^\ast}\right)+C_1\epsilon^\frac{N-sp}{p-1}+C_2\epsilon^\frac{N}{p-1}-C_3\epsilon^{sp}+C_4\epsilon^{2N-\a-2r(N-sp)/p}\\
			\leq& \sup_{t>0}S^\frac{N}{sp}\left(\frac{t^p}{p}-\frac{t^{p_s^\ast}}{p_s^\ast}\right)+C_1\epsilon^\frac{N-sp}{p-1}+C_2\epsilon^\frac{N}{p-1}-C_3\epsilon^{sp}+C_4\epsilon^{2N-\a-2r(N-sp)/p}\\
			=&\frac{s}{N}S^\frac{N}{sp}+C_1\epsilon^\frac{N-sp}{p-1}+C_2\epsilon^\frac{N}{p-1}-C_3\epsilon^{sp}+C_4\epsilon^{2N-\a-2r(N-sp)/p}\\
			<&\frac{s}{N}S^\frac{N}{sp},
		\end{align*}
		since  $sp^2 < \min\{N,(2N-\a)p-2r(N-sp)\}$ and so $C_1\epsilon^\frac{N-sp}{p-1}+C_2\epsilon^\frac{N}{p-1}-C_3\epsilon^{sp}+C_4\epsilon^{2N-\a-2r(N-sp)/p}<0$ for $\epsilon$ small enough. This proves that $c<\frac{s}{N}S^\frac{N}{sp}.$
	\end{proof}
	\subsection*{End of proof of Theorem \ref{t5.1}}
	\begin{proof}
		Combining Lemmas \ref{l5.3} and \ref{l5.7}, we see that $I_\l$ has a nontrivial critical point, which is the nontrivial weak solution of \eqref{e1.5}. Also since $I_\l(|u|)\leq I_\l(u)$ for any $u \in \w$, we see that the weak solution is nonnegative.
	\end{proof}
	\section{Existence of least energy nodal solution.}\label{SCS}
	In this section, we consider the problem $(P_\l)$. Recall the notations $J_\l$, $N_\l$, $M_\l$, $m_\l$, $c_\l$, and $u^-$ introduced in Section \ref{P}. We shall prove Theorem \ref{t2.11} in this section.
	We follow the techniques used in \cite{CNW} to obtain our result. First, we give some useful remarks:
	\begin{remark}\label{r8.1}
		Due to the nonlocal interactions between $u^+$ and $u^-$, we have the following inequalities:
		\begin{itemize}
			\item [(a)] $\|u\|^p \geq \|u^+\|^p +\|u^-\|^p$\\
			\item [(b)] $ [u]_r^{2r} \geq [u^+]_r^{2r}+[u^-]_r^{2r}$\\
			\item [(c)] $J_\l(u)\geq J_\l(u^+)+J_\l(u^-)$.
		\end{itemize}
	\end{remark}
	\begin{remark}\label{r8.2} {}
		For any $u \in \w$ the following inequality holds 
		\begin{align}\label{@}
			\langle \pl u^\pm, u^\pm \rangle \leq \langle \pl u, u^\pm \rangle \leq \langle \pl u, u\rangle.\end{align}
		These inequalities follow since the function $\psi(t)=|t|^{p-2}t$ satisfies the following inequalities
		$$\psi_p(c^\pm-d^\pm)(c^\pm-d^\pm)\leq \psi_p(c-d)(c^\pm-d^\pm)\leq \psi_p(c-d)(c-d),~\text{for all}~c,d\in \Rr.$$ As a result of \eqref{@}, the energy functional $J_\l$ satisfies $\langle J_\l^{\prime}(u),u^+\rangle \geq \langle J_\l^{\prime}(u^+),u^+\rangle$ and  $\langle J_\l^{\prime}(u),u^-\rangle \geq \langle J_\l^{\prime}(u^-),u^-\rangle$.
	\end{remark}
	\subsection*{Some Technical Lemmas} Now we shall give some technical lemmas which are required to prove our main result.
	\begin{lemma}\label{l7.1}
		For any $ u \in \w \setminus \{0\}$, there exists unique $ \tau_0 >0$ such that $\tau_0 u \in N_\l$.
	\end{lemma}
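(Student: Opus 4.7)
The plan is to use the standard fibering map approach. For $u\in W_0^{s,p}(\Omega)\setminus\{0\}$ fixed, define $\phi_u:(0,\infty)\to\mathbb R$ by $\phi_u(\tau)=J_\lambda(\tau u)$. Since $\langle J_\lambda'(\tau u),\tau u\rangle=\tau\,\phi_u'(\tau)$, the condition $\tau u\in N_\lambda$ is equivalent to $\phi_u'(\tau)=0$. So the lemma reduces to showing that $\phi_u'$ has a unique positive zero.

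Using the scaling $\|\tau u\|^p=\tau^p\|u\|^p$, $|\tau u|_q^q=\tau^q|u|_q^q$, and $[\tau u]_r^{2r}=\tau^{2r}[u]_r^{2r}$, I compute
$$\phi_u'(\tau)=\tau^{p-1}\|u\|^p-\lambda\tau^{q-1}|u|_q^q+\tau^{2r-1}[u]_r^{2r}=\tau^{p-1}h(\tau),$$
where $h(\tau):=\|u\|^p-\lambda\tau^{q-p}|u|_q^q+\tau^{2r-p}[u]_r^{2r}$. Since $\tau^{p-1}>0$, it suffices to show that $h$ has a unique positive zero. Here the hypothesis $1<p<2r<q$ of Theorem \ref{t2.11} is precisely what makes the argument go through: it guarantees $0<2r-p<q-p$, so $h(0^+)=\|u\|^p>0$ and, as $\tau\to\infty$, the negative term with exponent $q-p$ dominates the positive term with exponent $2r-p$, giving $h(\tau)\to-\infty$. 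Continuity then yields existence of at least one zero $\tau_0>0$.

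For uniqueness I would examine $h'$. A direct differentiation gives
$$h'(\tau)=(2r-p)\tau^{2r-p-1}[u]_r^{2r}-\lambda(q-p)\tau^{q-p-1}|u|_q^q,$$
and $h'(\tau)=0$ has the unique positive root
$$\tau^\ast=\left(\frac{(2r-p)[u]_r^{2r}}{\lambda(q-p)|u|_q^q}\right)^{1/(q-2r)},$$
since $q>2r$. Because $2r-p-1<q-p-1$, the positive term dominates for $\tau<\tau^\ast$ and the negative term dominates for $\tau>\tau^\ast$, so $h$ is strictly increasing on $(0,\tau^\ast)$ and strictly decreasing on $(\tau^\ast,\infty)$. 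Combined with $h(0^+)>0$ and $h(\infty)=-\infty$, this forces $h$ to have exactly one zero $\tau_0$, necessarily located in $(\tau^\ast,\infty)$.

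There is no real obstacle here; the argument is a routine fibering analysis tailored to the three competing homogeneities $p$, $2r$, $q$. The only thing to be careful about is that $|u|_q,[u]_r>0$ (both follow from $u\not\equiv 0$ together with the Hardy--Littlewood--Sobolev and Sobolev embeddings recalled in Section \ref{P}) so that the expressions above are well defined and the dominance arguments at $0$ and $\infty$ are valid.
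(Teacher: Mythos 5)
Your proof is correct and follows essentially the same fibering-map argument as the paper: both reduce membership in $N_\l$ to the vanishing of $\frac{d}{d\tau}J_\l(\tau u)=\tau^{p-1}\|u\|^p-\l\tau^{q-1}|u|_q^q+\tau^{2r-1}[u]_r^{2r}$ and exploit $p<2r<q$. The only cosmetic difference is in the uniqueness step: the paper divides the derivative by $\tau^{q-1}$ to obtain a strictly decreasing function with at most one zero, whereas you divide by $\tau^{p-1}$ and argue via the increasing-then-decreasing shape of $h$; both are valid.
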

	\begin{proof}
		Given $u \in \w \setminus \{0\}$, define $$h_\l(t)=J_\l(tu)~\text{for all}~t\geq 0.$$
		Clarly $h_\l(0)=0$. Also $$h_\l^{\prime}(t)=\langle J_\l^{\prime}(tu),u\rangle=t^{p-1}\|u\|^p-\l t^{q-1}|u|_q^q+t^{2r-1}[u]_r^{2r},$$
		which implies that $h_\l^{\prime}(\tau_0)=0$ for some $\tau_0 >0$ iff $\tau_0 u \in N_\l.$ Since $p<2r<q$, there exists $\delta >0$ such that $h_\l(t)>0$ if $t \in (0,\delta)$ and $h_\l(t)<0$ if $t \in \left(\frac{1}{\delta},0\right)$. Since $h_\l(0)=0$, we can see that $h_\l$ has a maximum at some point $\tau_0>0$. Hence $h_\l^{\prime}(\tau_0)=0$ and so $\tau_0 u \in N_\l$. Next, note that the map $\displaystyle\frac{h^\prime(t)}{t^{q-1}}$ is strictly decreasing. This implies that $\tau_0$ is unique.
	\end{proof}
	
	\begin{lemma}\label{l7.2}
		Assume that $u \in \w$ with $u^\pm \not = 0$. Then there is a unique pair $(l_u,m_u)$ of positive numbers such that $l_u u^+ +m_u u^- \in M_\l.$
	\end{lemma}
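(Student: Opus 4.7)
The plan is to realize $(l_u,m_u)$ as the unique critical point, indeed the unique global maximum, of the two-variable function
\[
\Phi(l,m) := J_\l(lu^+ + m u^-), \qquad (l,m) \in (0,\infty)^2.
\]
Since $u^+,u^- \in \w$ have disjoint supports and $l,m>0$, the positive and negative parts of $w:=lu^+ + m u^-$ are simply $w^+=lu^+$ and $w^-=mu^-$; by the chain rule, $\partial_l \Phi = \langle J_\l'(w),u^+\rangle$ and $\partial_m \Phi = \langle J_\l'(w),u^-\rangle$, so $\nabla\Phi(l,m)=0$ is exactly equivalent to $w \in M_\l$.

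For existence of the maximum, I would expand $\Phi$ using the disjointness of supports:
\[
\Phi(l,m) = \frac{\|lu^+ + m u^-\|^p}{p} \;-\; \l\,\frac{l^q|u^+|_q^q + m^q|u^-|_q^q}{q} \;+\; \frac{l^{2r}[u^+]_r^{2r} + m^{2r}[u^-]_r^{2r} + 2\,l^r m^r C}{2r},
\]
where $C:=\int_\O\int_\O |u^+(x)|^r|u^-(y)|^r/|x-y|^{\a}\,dy\,dx \ge 0$. Since $p<2r<q$, the negative $L^q$ term dominates at infinity, forcing $\Phi(l,m)\to -\infty$ as $l+m\to\infty$; near the origin the $p$-term is positive and bounds $\Phi$ below by $(l^p\|u^+\|^p+m^p\|u^-\|^p)/p>0$ thanks to Remark~\ref{r8.1}(a). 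Continuity then supplies a maximizer $(l_u,m_u)\in(0,\infty)^2$, which satisfies $\nabla\Phi(l_u,m_u)=0$ and therefore lies in $M_\l$.

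For uniqueness I would study the system $\phi_1:=\partial_l\Phi=0$, $\phi_2:=\partial_m\Phi=0$. Splitting the double integral in $\langle(-\Delta)_p^sw,u^+\rangle$ according to the sign of $u(x),u(y)$ yields
\[
\phi_1(l,m) = l^{p-1}\|u^+\|^p + B(l,m) - \l l^{q-1}|u^+|_q^q + l^{2r-1}[u^+]_r^{2r} + l^{r-1}m^r C,
\]
with $\phi_2$ obtained by the symmetric substitution $l\leftrightarrow m$, $u^+\leftrightarrow u^-$, where $B(l,m)\ge 0$ collects the mixed $\O^+\times\O^-$ contributions of the fractional $p$-Laplacian, vanishes when $m=0$, and scales like $l^{p-1}$ as $l\to\infty$ for fixed $m>0$. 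Dividing the equation $\phi_1=0$ by $l^{q-1}$ and using $p-q<0$, $2r-q<0$, $r-q<0$, every term on the resulting left-hand side is nonincreasing in $l$ for each fixed $m$, and the first term alone is strictly decreasing; hence for each $m>0$ there is a unique $l=L(m)>0$ with $\phi_1(L(m),m)=0$, and symmetrically a unique $m=M(l)>0$ with $\phi_2(l,M(l))=0$. The desired pair is the unique fixed point of $m\mapsto M(L(m))$, uniqueness of which follows from the same monotonicity.

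The main obstacle is the uniqueness step. The difficulty comes from the mixed term $B(l,m)$, which couples $l$ and $m$ in a non-separable, non-polynomial way through the fractional $p$-Laplacian, and from the Choquard cross-term $l^{r-1}m^r C$. Both are nonnegative and, crucially, scale strictly slower than $l^{q-1}$ (since $p-1<q-1$ and, using $2r<q$, also $r-1<q-1$), which is exactly what drives the monotonicity arguments that deliver the unique zero of $(\phi_1,\phi_2)$; the full hierarchy $p<2r<q$ is essential throughout.
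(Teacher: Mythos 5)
Your reduction of membership in $M_\l$ to $\nabla\Phi(l,m)=0$ on $(0,\infty)^2$ is correct, and your expansion of $\Phi$ over the disjoint supports of $u^\pm$ (in particular the splitting of the Choquard term into $l^{2r}[u^+]_r^{2r}+m^{2r}[u^-]_r^{2r}+2l^rm^rC$) is the same bookkeeping the paper uses. However, the existence half already has a gap: coercivity plus positivity near the origin only shows that the maximum of $\Phi$ over the closed quadrant is positive and attained; it does not exclude a maximizer of the form $(l_0,0)$ or $(0,m_0)$, where a critical point is useless because the corresponding function is not sign-changing. To rule this out you need the estimate $\langle J_\l'(lu^++mu^-),u^-\rangle\ge m^{p-1}A-\l m^{q-1}B>0$ for $m$ small, uniformly in $l$ on compact sets (and its mirror image); this is exactly the computation the paper runs before invoking Miranda's theorem, and in the paper the interior-maximum characterization is a separate statement (Lemma \ref{l7.3}) whose proof already relies on Lemma \ref{l7.2}. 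This gap is fixable, but it must be filled.

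The uniqueness step is where the proposal genuinely fails. Your monotonicity of $\phi_1(l,m)/l^{q-1}$ in $l$ is correct (including for the mixed term $B(l,m)/l^{q-1}$), and it does give a unique $L(m)$ for each $m$, and symmetrically a unique $M(l)$. But $\phi_1/l^{q-1}$ is \emph{increasing} in $m$ (both $B$ and $l^{r-q}m^rC$ grow with $m$), so $L$ is increasing, and likewise $M$; hence $m\mapsto M(L(m))$ is an increasing self-map of $(0,\infty)$, and increasing maps can have arbitrarily many fixed points. So ``the same monotonicity'' does not deliver uniqueness of the solution of the system $\phi_1=\phi_2=0$ — and that is precisely the hard part of the lemma. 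The paper handles it differently: it first reduces to the case $u\in M_\l$ (so that $(1,1)$ is one solution), takes a second solution $(l_0,m_0)$ ordered, say, $l_0\le m_0$, and uses the signs of the mixed fractional terms $Z_i^\pm$ (controlled by the ratio $m_0/l_0$) to derive $\bigl(l_0^{-(2r-p)}-1\bigr)X^+(u)\le\l\,(l_0^{q-2r}-1)|u^+|_q^q$ together with its mirror inequality; since $p<2r<q$, these force $l_0\ge1$ and $m_0\le1$, whence $l_0=m_0=1$. Some two-sided comparison of this kind, exploiting the ordering of $l$ and $m$, is needed; the fixed-point formulation as you have stated it is not a substitute.
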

	
	\begin{proof}
		For fixed $u \in \w$ with $u^\pm \not =0$, define $$ K(l,m)=\langle J_\l^\prime(lu^+ +mu^-),lu^+\rangle \text{ and}~   I(l,m)=\langle J_\l^\prime(lu^+ +mu^-),mu^-\rangle.$$ Also set $\O^+ =\{x \in \O : u(x)\geq 0\}$ and  $\O^- =\{x \in \O : u(x)< 0\}$. Now 
		\begin{align*}
			K(l,m)=&\langle J_\l^\prime(lu^+ +mu^-),lu^+\rangle\\ \nonumber
			>& \int\limits_Q\frac{((lu^+ + mu^-)(x)-(lu^+ + mu^-)(y))^{p-1}(lu^+(x)-lu^+(y))}{|x-y|^{N+ps}}dydx\\ 
			&- \l \int\limits_\O(lu^++mu^-)^{q-1}(lu^+)dx\\ \end{align*}
		\begin{align*}	\geq & l^p\int\limits_{\O^+}\int\limits_{\O^+}\frac{|u^+(x) -u^+(y)|^p}{|x-y|^{N+ps}}dydx +\int\limits_{\O^+}\int\limits_{\O^-}\frac{|lu^+(x)-mu^-(y)|^{p-1}(lu^+(x))}{|x-y|^{N+ps}}dydx\\ 
			&+\int\limits_{\O^-}\int\limits_{\O^+}\frac{(mu^-(x)-lu^+(y))^{p-1}(-lu^+(y))}{|x-y|^{N+ps}}dydx+l^p \int\limits_{\O^+}\int\limits_{\O^c} \frac{|u^+(x)|^p}{|x-y|^{N+ps}}dydx\\
			&+ l^p \int\limits_{\O^c}\int\limits_{\O^+} \frac{|u^+(y)|^p}{|x-y|^{N+ps}}dydx   -\l l^q\int\limits_\O |u^+|^qdx\\
			\geq&l^p\int\limits_{\O^+}\int\limits_{\O^+}\frac{|u^+(x) -u^+(y)|^p}{|x-y|^{N+ps}}dydx-\l l^q\int\limits_\O |u^+|^qdx.
		\end{align*}
		Since $p<q$, $K(l,m)>0$ for $0<l<<1$ and for any $m>0$. Similarly, we have 
		$$ I(l,m)\geq m^p \int\limits_{\O^-}\int\limits_{\O^-}   \frac{|u^-(x) -u^-(y)|^p}{|x-y|^{N+ps}}dydx-\l m^q\int\limits_\O |u^-|^qdx ,                                                $$
		and so $I(l,m)>0$ for  $0<m<<1$ and for any $l>0$.\\
		Hence by choosing $\delta_1 >0$ small, we have 
		\begin{equation}
			K(\delta_1,m)>0~,~I(l,\delta_1)>0.
		\end{equation}
		Now, for any $\delta_2 >\delta_1$, using the inequality $|a+b|^\mu<C(|a|^\mu+|b|^\mu)$, for any $a,b \in \Rr$ and $\mu>0$, we have
		\begin{align*}
			K(\delta_2,m)
			\leq&\delta_2^p\int\limits_{\O^+}\int\limits_{\O^+}\frac{|u^+(x) -u^+(y)|^p}{|x-y|^{N+ps}}dydx+C\delta_2^p \int\limits_{\O^+}\int\limits_{\O^-}\frac{|u^+(x)|^p}{|x-y|^{N+ps}}dydx\\ 
			&+C\delta_2m^{p-1}\int\limits_{\O^+}\int\limits_{\O^-}\frac{|u^-(x)|^{p-1}u^+(x)}{|x-y|^{N+ps}}dydx+\delta_2^p \int\limits_{\O^+}\int\limits_{\O^c}\frac{|u^+(x)|^p}{|x-y|^{N+ps}}dydx\\
			&+\delta_2^p \int\limits_{\O^c}\int\limits_{\O^+}\frac{|u^+(y)|^p}{|x-y|^{N+ps}}dydx-C\delta_2m^{p-1}\int\limits_{\O^-}\int\limits_{\O^+}\frac{|u^-(x)|^{p-2}u^-(x)u^+(y)}{|x-y|^{N+ps}}dydx\\
			&-C\delta_2^{p-1}m\int\limits_{\O^-}\int\limits_{\O^+}\frac{|u^+(y)|^{p-1}u^-(x)}{|x-y|^{N+ps}}dydx \\ &+C\delta_2^2m^{p-2}\int\limits_{\O^-}\int\limits_{\O^+}\frac{|u^-(x)|^{p-2}(u^+(y))^2}{|x-y|^{N+ps}}dydx\\
			&+\delta_2^p\int\limits_{\O^-}\int\limits_{\O^+}\frac{|u^+(y)|^p}{|x-y|{N+ps}}dydx-\l \delta_2^q\int\limits_\O |u^+|^qdx\\
			&+\delta^{2r}\int\limits_{\O^+}\int\limits_{\O^+}
			\frac{|u^+(y)|^r|u^+(x)|^r}{|x-y|^\a}dydx +\delta_2^r m^r\int\limits_{\O^+}\int\limits_{\O^-}\frac{|u^-(y)|^r|u^+(x)|^r}{|x-y|^\a}dydx.
		\end{align*}
		Since $p<2r<q$, $K(\delta_2,m)<0$ for $\delta_2$ large and $m\in [\delta_1,\delta_2]$.
		Similarly, $I(l,\delta_2)<0$ for $\delta_2$ large and $l \in [\delta_1,\delta_2]$. Hence by Miranda's theorem \cite{M}, we get a positive pair $(l_u,m_u)\in [0,\infty) \times [0,\infty)
		$ such that $l_uu^+ m_u u^- \in M_\l$. Next, we show that the pair $(l_u,m_u)$ is unique. We consider the following two cases:\\
		\textbf{Case 1:} $u \in M_\l.$\\
		In this case, we show that $(l_u,m_u)=(1,1)$ is the unique pair of numbers such that $l_u u^+ +m_u u^- \in M_\l$. For suppose $(l_0,m_0)$ be a pair of numbers such that $l_0u^+ +m_0 u^- \in M_\l$. Also without loss of generality suppose that $0<l_0 \leq m_0$. For $u \in \w$, we define
		\begin{align*}
			X^+(u)=&\int\limits_{\O^+}\int\limits_{\O^+}\frac{|u^+(x)-u^+(y)|^p}{|x-y|^{N+ps}}dydx+\int\limits_{\O^+}\int\limits_{\O^-}\frac{|u^+(x)-u^-(y)|^{p-1}u^+(x)}{|x-y|^{N+ps}}dydx\\ \nonumber
			&+\int\limits_{\O^-}\int\limits_{\O^+}\frac{|u^-(x)-u^+(y)|^{p-1}{u^+(y)}}{|x-y|^{N+ps}}dydx+\int\limits_{\O^+}\int\limits_{\O^c}\frac{|u^+(x)|^p}{|x-y|^{N+ps}}dydx\\ \nonumber
			&+\int\limits_{\O^c}\int\limits_{\O^+}\frac{|u^+(y)|^p}{|x-y|^{N+ps}}dydx.
		\end{align*}	
		\begin{align*}
			X^-(u)=&\int\limits_{\O^-}\int\limits_{\O^-}\frac{|u^-(x)-u^-(y)|^p}{|x-y|^{N+ps}}dydx+\int\limits_{\O^+}\int\limits_{\O^-}\frac{|u^+(x)-u^-(y)|^{p-1}(-u^-(y))}{|x-y|^{N+ps}}dydx\\ \nonumber
			&+\int\limits_{\O^-}\int\limits_{\O^+}\frac{|u^-(x)-u^+(y)|^{p-1}{(-u^-(x))}}{|x-y|^{N+ps}}dydx+\int\limits_{\O^-}\int\limits_{\O^c}\frac{|u^-(x)|^p}{|x-y|^{N+ps}}dydx\\ \nonumber
			&+\int\limits_{\O^c}\int\limits_{\O^-}\frac{|u^-(y)|^p}{|x-y|^{N+ps}}dydx
		\end{align*}
		$\displaystyle	Y^+(u)=\int\limits_\O\int\limits_{\O}\frac{|u(y)|^r|u^+(x)|^r}{|x-y|^\a}dydx$ and
		$\displaystyle Y^-(u)=\int\limits_\O\int\limits_{\O}\frac{|u(y)|^r|u^-(x)|^r}{|x-y|^\a}dydx$
		
		Now since $u \in M_\l$, we have $ \langle J_\l^{\prime}(u),u^\pm\rangle=0.$ This implies 
		\begin{equation}\label{7.2}
			X^+(u)=\l |u^+|_q^q -Y^+(u)
		\end{equation}
		
		and
		\begin{equation}\label{7.3}
			X^-(u)=\l |u^-|_q^q -Y^-(u)
		\end{equation}
		Again since $l_0 u^+ +m_0 u^- \in M_\l$, we have
		$ \langle J_\l^{\prime}(l_0u^+ + m_0 u^-), l_0u^\pm \rangle=0$. This implies
		\begin{equation}\label{7.4}
			l_0^p(X^+(u)+Z_1^+(u)+Z_2^+(u))=\l l_0^q |u^+|_q^q-l_0^rm_0^rY^+(u),
		\end{equation}
		and \begin{equation}\label{7.5}
			m_0^p(X^-(u)+Z_1^-(u)+Z_2^-(u))=\l l_0^q |u^-|_q^q-l_0^rm_0^rY^-(u),
		\end{equation}	
		where 
		\begin{small}
			$$ Z_1^+(u)=\frac{1}{l_0^{2r-p}}\int\limits_{\O^+}\int\limits_{\O^-}\frac{|u^+(x)-\frac{m_0}{l_0}u^-(y)|^{p-1}u^+(x)}{|x-y|^{N+ps}}dydx-	\int\limits_{\O^+}\int\limits_{\O^-}\frac{|u^+(x)-u^-(y)|^{p-1}u^+(x)}{|x-y|^{N+ps}}dydx,$$
			$$Z_2^+(u)=\int\limits_{\O^-}\int\limits_{\O^+}\frac{|\frac{m_0}{l_0}u^-(x)-u^+(y)|^{p-1}u^+(y)}{|x-y|^{N+ps}}dydx-\int\limits_{\O^-}\int\limits_{\O^+}\frac{|u^-(x)-u^+(y)|^{p-1}{u^+(y)}}{|x-y|^{N+ps}}dydx,$$
			$$Z_1^-(u)=\int\limits_{\O^+}\int\limits_{\O^-}\frac{|\frac{l_0}{m_0}u^+(x)-u^-(y)|^{p-1}(-u^-(y))}{|x-y|^{N+ps}}dydx -\int\limits_{\O^+}\int\limits_{\O^-}\frac{|u^+(x)-u^-(y)|^{p-1}(-u^-(y))}{|x-y|^{N+ps}}dydx,$$ 
			$$Z_2^-(u)=\int\limits_{\O^-}\int\limits_{\O^+}\frac{|u^-(x)-\frac{l_0}{m_0}u^+(y)|^{p-1}(-u^-(x))}{|x-y|^{N+ps}}dydx-\int\limits_{\O^-}\int\limits_{\O^+}\frac{|u^-(x)-u^+(y)|^{p-1}{(-u^-(x))}}{|x-y|^{N+ps}}dydx.$$	\end{small}
		Since $l_0\leq m_0$, $Z_1^+(u),Z_2^+(u)>0$ and $Z_1^-(u),Z_2^-(u)<0$. Using these observations from \eqref{7.4} and \eqref{7.5}, we get 
		\begin{equation}\label{7.6}
			\displaystyle\frac{1}{l_0^{2r-p}}X^+(u)\leq \l l_0^{q-2r} |u^+|^q_q-Y^+(u)\mbox{ and }	\end{equation}
		\begin{equation}\label{7.7}
			\displaystyle\frac{1}{m_0^{2r-p}}X^-(u)\geq \l m_0^{q-2r} |u^-|^q_q-Y^-(u)	\end{equation}
		On subtracting \eqref{7.2} from \eqref{7.7}, we get 
		\begin{equation}\label{e7.8}
			\displaystyle\left(	\frac{1}{l_0^{2r-p}}-1\right) X^+(u)\leq \l (l_0^{q-2r}-1)|u^+|_q^q.
		\end{equation}
		Clearly, $l_0 <1$ contradicts \eqref{e7.8}. Hence $l_0 \geq 1$. Similarly using \eqref{7.7} and \eqref{7.3}, we get $m_0 \leq 1$. Hence we have $m_0=l_0=1$.\\
		\textbf{Case 2:} $u \not \in M_\l.$
		Suppose there exist $(l_1,m_1)$ and $(l_2,m_2)$ in $[0,\infty)\times [0,\infty)$ such that $w_1=l_1u^+ +m_1u^- \in M_\l$ and $w_2=l_2u^+ +m_2u^- \in M_\l$. Now
		$$w_2 =\left(\frac{l_2}{l_1}\right)l_1u^+ +\left(\frac{m_2}{m_1}\right)m_1u^-=\frac{l_2}{l_1}w_1^++\frac{m_2}{m_1}w_1^- \in M_\l.$$
		Since $w_1 \in M_\l$, 
		\begin{equation*}
			\frac{l_2}{l_1}=\frac{m_2}{m_1}=1.
		\end{equation*}
		Hence $l_1=l_2$ and $m_1=m_2$. This completes the proof.
	\end{proof}
	For any $u \in \w$ with $u^\pm \not=0$, define $H_u:[0,\infty+)\times[0,\infty+) \rightarrow \Rr$ by 
	\begin{equation*}
		H_u(l,m)=J_\l(lu^+ + mu^-) ~\text{for all}~l,m\geq 0.
	\end{equation*}
	\begin{lemma}\label{l7.3}
		For $u \in \w$ with $u^\pm\not=0$, $(l_u,m_u)$ is the unique maximum point of $H_u$, where $(l_u,m_u)$ is obtained in Lemma \ref{l7.2}.
	\end{lemma}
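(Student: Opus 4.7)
The strategy is to show that $H_u$ attains its supremum on $[0,\infty)^2$ at a unique interior point which, by the uniqueness part of Lemma \ref{l7.2}, must coincide with $(l_u,m_u)$.

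First I will verify coercivity from above. Since $u^+$ and $u^-$ have disjoint supports, $|lu^++mu^-|_q^q = l^q|u^+|_q^q + m^q|u^-|_q^q$, while triangle and Hardy--Littlewood--Sobolev inequalities give $\|lu^++mu^-\|^p \le C(l+m)^p$ and $[lu^++mu^-]_r^{2r} \le C(l+m)^{2r}$. Under $1<p<2r<q$ the $-\l|\cdot|_q^q/q$ term dominates, so $H_u(l,m)\to-\infty$ as $l+m\to\infty$, and continuity produces a maximiser $(l^\ast,m^\ast)\in[0,\infty)^2$. The maximum value is positive because $l_u u^+ + m_u u^- \in M_\l \subset N_\l$ and any $v\in N_\l$ satisfies
\[
J_\l(v) = \Bigl(\tfrac{1}{p}-\tfrac{1}{q}\Bigr)\|v\|^p + \Bigl(\tfrac{1}{2r}-\tfrac{1}{q}\Bigr)[v]_r^{2r} > 0
\]
upon inserting $\langle J_\l'(v),v\rangle=0$ into the expression of $J_\l$.

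Next I will rule out boundary maxima. Suppose $l^\ast=0$ and $m^\ast>0$. The one-sided optimality condition forces $\partial_l H_u(0,m^\ast)=\langle J_\l'(m^\ast u^-),u^+\rangle\le 0$. The $L^q$ and Choquard contributions to this pairing vanish pointwise a.e.\ because both integrands carry the factor $u^+(x)u^-(x)=0$, leaving only $\langle (-\Delta)_p^s(m^\ast u^-),u^+\rangle$. Splitting the double integral over $Q$, the contributions from $\O^+\times\O^+$ and $\O^-\times\O^-$ vanish because one of the test differences $u^-(x)-u^-(y)$ or $u^+(x)-u^+(y)$ is identically zero on those pieces, and the cross terms with $\O^c$ vanish for the same reason; on $\O^+\times\O^-$ one has $u^-(x)-u^-(y)=-u(y)>0$ and $u^+(x)-u^+(y)=u(x)>0$, yielding a strictly positive integrand, and the $\O^-\times\O^+$ piece is positive by symmetry. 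Hence $\partial_l H_u(0,m^\ast)>0$, a contradiction. The case $m^\ast=0$ is symmetric, and $(0,0)$ is excluded since $H_u(0,0)=0$ is strictly less than $H_u(l_u,m_u)$.

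Since $(l^\ast,m^\ast)\in(0,\infty)^2$ is therefore an interior critical point of $H_u$, one has $\langle J_\l'(l^\ast u^+ + m^\ast u^-),u^\pm\rangle=0$, i.e.\ $l^\ast u^+ + m^\ast u^- \in M_\l$. The uniqueness part of Lemma \ref{l7.2} then forces $(l^\ast,m^\ast)=(l_u,m_u)$, establishing simultaneously that this point is a global maximiser and that it is unique. The main obstacle is the interior-location step: all the ``algebraic'' pieces of $\nabla J_\l$ collapse on disjoint supports of $u^\pm$, so the strict positivity of the genuinely nonlocal $(-\Delta)_p^s$ cross interaction between $\O^+$ and $\O^-$ is the only, and decisive, mechanism that keeps the maximum off the coordinate axes.
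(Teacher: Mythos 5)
Your argument is correct, and its overall architecture — coercivity of $H_u$, exclusion of boundary maximisers, identification of an interior maximiser with the unique pair of Lemma \ref{l7.2} via $\nabla H_u(l^\ast,m^\ast)=0 \Rightarrow l^\ast u^++m^\ast u^-\in M_\l$ — is the same as the paper's. Where you genuinely diverge is the boundary step. The paper assumes a maximum at $(l_0,0)$, extracts the Nehari identity $\langle J_\l'(l_0u^+),l_0u^+\rangle=0$, compares it with the constraint at $l_uu^++m_uu^-$ through the inequality of Remark \ref{r8.2} to obtain $l_0\le l_u$, and then gets a strict energy inequality $J_\l(l_0u^+)<J_\l(l_uu^++m_uu^-)$ by rewriting both energies as $J_\l-\frac{1}{2r}\langle J_\l',\cdot\rangle$ — an ordering-plus-energy-comparison argument. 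You instead compute the one-sided derivative $\partial_l H_u(0,m^\ast)=\langle J_\l'(m^\ast u^-),u^+\rangle=(m^\ast)^{p-1}\langle\pl u^-,u^+\rangle$, note that the $L^q$ and Choquard contributions drop out on the disjoint supports, and show that the surviving nonlocal cross interaction over $\O^+\times\O^-$ and $\O^-\times\O^+$ is strictly positive, contradicting the first-order condition at a boundary maximum. Your route is shorter and isolates exactly the mechanism (the strict version of the inequalities in Remarks \ref{r8.1}--\ref{r8.2}) that makes the nodal decomposition nontrivial for nonlocal operators; the paper's route avoids differentiating at the boundary and is more robust if the reaction terms did not decouple so cleanly on $\{u>0\}$ and $\{u<0\}$. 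Two cosmetic points: on $\O^+\times\O^c$ the integrand vanishes because the increment of $u^-$ (the argument of the operator), not of the test function $u^+$, is zero there; and on $\O^+\times\O^-$ one only has $u^+(x)-u^+(y)=u(x)\ge0$ pointwise, strict positivity of the integral coming from $u^+\not\equiv0$.
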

	\begin{proof}
		For $u \in \w$ with $u^\pm \not=0$,  using the inequality $|a+b|^\mu<C(|a|^\mu+|b|^\mu)$, for any $a,b \in \Rr$ and $\mu>0$, we have 
		\begin{align*}
			H_u(l,m)=& J_\l(lu^+ +mu^-)\\
			=&\frac{1}{p}\int\limits_Q\frac{|(lu^++mu^-)(x)-(lu^++mu^-)(y)|^p}{|x-y|^{N+ps}}dydx-\frac{\l}{q}\int\limits_\O|lu^+ +mu^-|^qdx\\
			&+ \frac{1}{2r}\int\limits_\O\int\limits_\O\frac{|(lu^+ +mu^-)(y)|^r|(lu^+ +mu^-)(x)|^r}{|x-y|^\a}dydx\\ \end{align*}
		\begin{align*}
			\leq&l^p\int\limits_{\O^+}\int\limits_{\O^+}\frac{|u^+(x)-u^+(y)|^p}{|x-y|^{N+ps}}dydx+m^p\int\limits_{\O^-}\int\limits_{\O^-}\frac{|u^-(x)-u^-(y)|^p}{|x-y|^{N+ps}}dydx\\
			&+Cl^p\int\limits_{\O^+}\int\limits_{\O^-}\frac{|u^+(x)|^p}{|x-y|^{N+ps}}dydx
			+Cm^p\int\limits_{\O^+}\int\limits_{\O^-}\frac{|u^-(y)|^p}{|x-y|^{N+ps}}dydx\\ 
			&+Cm^p\int\limits_{\O^-}\int\limits_{\O^+}\frac{|u^-(x)|^p}{|x-y|^{N+ps}}dydx
			+Cl^p\int\limits_{\O^-}\int\limits_{\O^+}\frac{|u^+(y)|^p}{|x-y|^{N+ps}}dydx\\
			&+\frac{l^p}{p}\int\limits_{\O^+}\int\limits_{\O^c}\frac{|u^+(x)|^p}{|x-y|^{N+ps}}dydx+\frac{l^p}{p}\int\limits_{\O^c}\int\limits_{\O^+}\frac{|u^+(y)|^p}{|x-y|^{N+ps}}dydx\\
			&+\frac{m^p}{p}\int\limits_{\O^-}\int\limits_{\O^c}\frac{|u^-(x)|^p}{|x-y|^{N+ps}}dydx
			+\frac{m^p}{p}\int\limits_{\O^c}\int\limits_{\O^-}\frac{|u^-(y)|^p}{|x-y|^{N+ps}}dydx-\l\frac{l^q}{q}\int\limits_\O|u^+|^qdx\\
			&-\l\frac{m^q}{q}\int\limits_\O|u^-|^qdx+\frac{l^{2r}}{2r}\int\limits_{\O^+}\int\limits_{\O^+}\frac{|u^+(x)|^r|u^+(y)|^r}{|x-y|^\a}dydx\\
			&+\frac{m^{2r}}{2r}\int\limits_{\O^-}\int\limits_{\O^-}\frac{|u^-(x)|^r|u^-(y)|^r}{|x-y|^\a}dydx
			+\frac{l^{r}m^r}{2r}\int\limits_{\O^+}\int\limits_{\O^-}\frac{|u^+(x)|^r|u^-(y)|^r}{|x-y|^\a}dydx\\
			&+\frac{l^{r}m^r}{2r}\int\limits_{\O^-}\int\limits_{\O^+}\frac{|u^+(y)|^r|u^-(x)|^r}{|x-y|^\a}dydx.
		\end{align*}
		Since $p<2r<q$, $$\lim\limits_{|(l,m)|\rightarrow \infty}H_u(l,m)\rightarrow -\infty.$$
		From the proof of \ref{l7.2}, $(l_u,m_u)$ is the unique critical point of $H_u$ in $[0,\infty)\times[0,\infty)$. Hence it is sufficient to check that a maximum point cannot be achieved on the boundary of $[0,\infty)\times[0,\infty)$. By contradiction, we suppose $(l_0,0)$ is a point of maximum for some $l_0 >0$. This means $J_\l(l_0u^+)\geq J_\l(tu^+)$ for all $t>0$. Hence, $\langle J_\l^{\prime}(l_0u^+),l_0u^+\rangle =0$, i.e.,
		\begin{equation}\label{7.8}
			\frac{1}{l_0^{2r-p}}\|u^+\|^p+[u^+]_r^{2r}=\l l_0^{q-2r}|u^+|_q^q.
		\end{equation}
		Again since $l_u u^+ +m_u u^- \in M_\l$, we have 
		\begin{align*}
			\langle J_\l^{\prime}(l_u u^+ +m_u u^-),l_u u^+\rangle=0.
		\end{align*}
		Now
		\begin{align*}
			\langle J_\l^{\prime}(l_uu^+),l_uu^+\rangle\leq 	\langle J_\l^{\prime}(l_uu^++m_u u^-),l_uu^+\rangle\leq0
		\end{align*} 	 i.e.,
		\begin{equation}\label{7.10}
			\frac{1}{l_u^{2r-p}}\|u^+\|^p+[u^+]_r^{2r}\leq\l l_u^{q-2r}|u^+|_q^q.
		\end{equation}
		On subtracting \eqref{7.8} from \eqref{7.10}, we get
		\begin{equation}\label{7.11}
			\left(\frac{1}{l_u^{2r-p}}-\frac{1}{l_0^{2r-p}}\right)\|u^+\|^p\ \leq \l(l_u^{q-2r}-l_0^{q-2r})|u^+|_q^q.
		\end{equation}
		If $l_0>l_u$, we get a contradiction to \eqref{7.11}. Hence we have $l_0\leq l_u$. Now 
		\begin{align*}
			J_\l(l_0,0)=& J_\l(l_0u^+)\\
			=&  J_\l(l_0u^+)-\frac{1}{2r}\langle J_\l^{\prime}(l_0u^+),l_0u^+\rangle\\ 
			=&\left(\frac{1}{p}-\frac{1}{2r}\right)l_0^p\|u^+\|^p+\l\left(\frac{1}{2r}-\frac{1}{q}\right)l_0^q|u^+|_q^q\\
			\leq&\left(\frac{1}{p}-\frac{1}{2r}\right)\|l_u u^+\|^p+\l \left(\frac{1}{2r}-\frac{1}{q}\right)|l_u u^+|_q^q\\
			<&\left(\frac{1}{p}-\frac{1}{2r}\right)\|l_u u^+ +m_u u^-\|^p+\l\left(\frac{1}{2r}-\frac{1}{q}\right)|l_u u^++m_u u^-|_q^q\\
			=&J_\l(l_u u^++m_u u^-)-\frac{1}{2r}\langle J_\l^{\prime}(l_u u^+ +m_u u^-),l_u u^+ + m_u u^-\rangle\\
			=&J_\l(l_u u^++m_u u^-).
		\end{align*}
		This yields the contradiction. Similarly $H_u$ cannot achieve its global maximum on $(0,m)$ with $m>0$.\\
		This completes the proof.
	\end{proof}
	\begin{lemma}\label{l7.4}
		For any $u \in M_\l$, there exists $k_1,k_2 >0$ such that
		\begin{enumerate}
			\item $\|u^\pm\|\geq k_1$.
			\item $|u^\pm|_q^q\geq k_2$.
		\end{enumerate}
	\end{lemma}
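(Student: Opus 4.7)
The plan is to exploit the membership condition $u\in M_\l$ — which supplies both $u^\pm\not\equiv 0$ and $\langle J_\l'(u),u^\pm\rangle=0$ — together with the $T$-type inequality recorded in Remark \ref{r8.2} and Sobolev embedding, in order to bootstrap the required lower bounds.

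Concretely, I will test $\langle J_\l'(u),u^+\rangle=0$ and rewrite it as
$$\langle \pl u, u^+\rangle - \l|u^+|_q^q + \int_\O\int_\O \frac{|u(y)|^r|u(x)|^{r-2}u(x)u^+(x)}{|x-y|^\a}\,dy\,dx = 0.$$
A brief pointwise check (splitting on the sign of $u(x)$) gives $|u(x)|^{r-2}u(x)u^+(x)=|u^+(x)|^r$, so the Choquard contribution simplifies to $\int\int |u(y)|^r|u^+(x)|^r/|x-y|^\a\,dy\,dx$, which is manifestly nonnegative. From Remark \ref{r8.2} I also have $\langle \pl u, u^+\rangle\geq \langle \pl u^+,u^+\rangle = \|u^+\|^p$. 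Combining both remarks produces the clean inequality
$$\|u^+\|^p \leq \l\,|u^+|_q^q.$$

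Because $q<p_s^\ast$, the Sobolev embedding $\w\hookrightarrow L^q(\O)$ yields $|u^+|_q^q \leq C^q\|u^+\|^q$ for some $C>0$. Since $u^+\not\equiv 0$, dividing by $\|u^+\|^p$ and using $q>p$ gives $\|u^+\|^{q-p}\geq 1/(\l C^q)$, hence $\|u^+\|\geq k_1 := (\l C^q)^{-1/(q-p)}$. Plugging this back into $\|u^+\|^p\leq \l|u^+|_q^q$ yields $|u^+|_q^q\geq k_1^p/\l =: k_2$. Exactly the same argument, applied with $u^-$ in place of $u^+$ (the identity $|u|^{r-2}u\cdot u^- = -|u^-|^r$ is balanced by the sign of $u^-$ in the definition of $M_\l$), produces the corresponding bounds for $u^-$.

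The one delicate point I expect to be the main obstacle is the sign bookkeeping of the Choquard term for a sign-changing $u$: one must verify that, after expansion, its contribution to $\langle J_\l'(u),u^\pm\rangle$ is nonnegative (so that it can be discarded in the inequality going in the right direction). This is what the identity $|u|^{r-2}u\cdot u^+ = (u^+)^r$ — together with the disjointness of the supports of $u^+$ and $u^-$ — delivers. Once this is clear, the rest is a standard Nehari-set coercivity argument driven solely by the strict inequality $p<q$.
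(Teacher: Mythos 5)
Your argument is correct and is essentially the paper's proof: both reduce $\langle J_\l'(u),u^\pm\rangle=0$ to $\|u^\pm\|^p\le\l|u^\pm|_q^q$ via the inequality of Remark \ref{r8.2} (equivalently $\langle J_\l'(u^\pm),u^\pm\rangle\le 0$, the paper keeping the discarded term as $-[u^\pm]_r^{2r}$), and then conclude by Sobolev embedding and $p<q$. The only cosmetic point is that with the paper's convention $u^-=\min\{u,0\}$ one in fact gets $|u|^{r-2}u\cdot u^-=|u^-|^r\ge 0$ directly, so the sign bookkeeping you flag as delicate is immediate.
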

	\begin{proof}
		Since $ u \in M_\l$, $\langle J_\l^{\prime}(u),u^+\rangle=0$. Then as in Lemma \ref{l7.3} $\langle J_\l^{\prime}(u^+),u^+\rangle <0$. This implies using Sobolev embedding that
		\begin{equation}\label{7.12}
			\|u^+\|^p<\l|u^+|_q^q -[u^+]_r^{2r}\leq C\|u^+\|^q.
		\end{equation}
		Since $p<q$, $\|u^+\|>k_1$. Also from \eqref{7.12}, we have $|u^+|_q^q \geq k_2$. Similarly, we have $\|u^-\| \geq k_1$ and $|u^-|_q^q\geq k_2$.
	\end{proof}
	\begin{lemma}\label{l7.5}
		$J_\l$ is bounded from below on $M_\l$.
	\end{lemma}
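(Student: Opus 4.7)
The plan is to exploit the Nehari-type identity satisfied by elements of $M_\l$ to eliminate the indefinite term $\l|u|_q^q/q$ in $J_\l$, and then use the exponent ordering $p<2r<q$ to conclude nonnegativity. First I would observe that every $u\in M_\l$ satisfies $\langle J_\l'(u),u^+\rangle=0$ and $\langle J_\l'(u),u^-\rangle=0$. Since in this section the notation is $u^-(x)=\min\{u(x),0\}$, we have $u=u^++u^-$, and adding the two identities gives $\langle J_\l'(u),u\rangle=0$. Thus $M_\l\subset N_\l$, which reads explicitly as
$$\l|u|_q^q=\|u\|^p+[u]_r^{2r}.$$

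Substituting this identity back into the definition of $J_\l$ yields
$$J_\l(u)=\frac{\|u\|^p}{p}-\frac{1}{q}\bigl(\|u\|^p+[u]_r^{2r}\bigr)+\frac{[u]_r^{2r}}{2r}=\left(\frac{1}{p}-\frac{1}{q}\right)\|u\|^p+\left(\frac{1}{2r}-\frac{1}{q}\right)[u]_r^{2r}.$$
Under the hypothesis $1<p<2r<q$ both coefficients are strictly positive, and both $\|u\|^p$ and $[u]_r^{2r}$ are nonnegative, so $J_\l(u)\geq 0$ for every $u\in M_\l$. This gives the desired lower bound immediately, and in fact proves more: combining with Lemma \ref{l7.4} one has $\|u\|^p\geq \|u^+\|^p+\|u^-\|^p\geq 2k_1^p$, so that
$$\inf_{u\in M_\l}J_\l(u)\geq 2k_1^p\left(\frac{1}{p}-\frac{1}{q}\right)>0,$$
which is useful for later minimization arguments on the Nehari nodal set.

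There is essentially no hard step here; the only subtlety is the additive decomposition $u=u^++u^-$ that gives $M_\l\subset N_\l$, which depends on the convention chosen for $u^-$ in Section \ref{SCS}. Everything else is a mechanical consequence of the Nehari identity and the assumed ordering of the exponents.
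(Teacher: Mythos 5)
Your argument is correct and is precisely the ``standard'' proof that the paper omits: since $u=u^++u^-$ under the convention $u^-=\min\{u,0\}$, membership in $M_\l$ gives $\langle J_\l'(u),u\rangle=0$, and computing $J_\l(u)-\tfrac{1}{q}\langle J_\l'(u),u\rangle$ leaves only terms with positive coefficients because $p<2r<q$. This matches the algebra the paper itself performs elsewhere (e.g.\ in Lemma \ref{l7.6} and the proof of Theorem \ref{t2.11}, where the equivalent combination $J_\l-\tfrac{1}{2r}\langle J_\l',\cdot\rangle$ is used), and your additional observation that the infimum is in fact strictly positive via Lemma \ref{l7.4} is also valid.
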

	\begin{proof}
		Proof is standard and thus omitted.
	\end{proof}
	Hence $c_\l =\inf\limits_{u\in M_\l}J_\l(u)$ is well defined. Next, we show that $c_\l$ is achieved on $M_\l$.
	\begin{lemma}\label{l7.6}
		There exists $u_\l \in M_\l$ such that $J_\l(u_\l)=c_\l$.
	\end{lemma}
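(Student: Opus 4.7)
The plan is to use the standard Nehari manifold technique adapted to the nodal set $M_\l$. Take a minimizing sequence $(u_n) \subset M_\l$ with $J_\l(u_n) \to c_\l$. Combining $\langle J_\l'(u_n), u_n\rangle = 0$ with the definition of $J_\l$ yields
\[ J_\l(u_n) = \Big(\frac{1}{p}-\frac{1}{q}\Big)\|u_n\|^p + \Big(\frac{1}{2r}-\frac{1}{q}\Big)[u_n]_r^{2r}, \]
and since $p < 2r < q$ both coefficients are positive, so $(u_n)$ is bounded in $\w$. By reflexivity and the compactness of $\w \hookrightarrow L^q(\O)$, pass to a subsequence with $u_n \rightharpoonup v$ in $\w$, $u_n \to v$ in $L^q(\O)$ and pointwise a.e. The bound $\|u_n^\pm\| \leq \|u_n\|$ (implicit in Remark \ref{r8.1}(a)), together with pointwise convergence, gives $u_n^\pm \rightharpoonup v^\pm$ in $\w$ and $u_n^\pm \to v^\pm$ in $L^q(\O)$. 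Lemma \ref{l7.4} then forces $|v^\pm|_q^q \geq k_2 > 0$, so $v^\pm \not\equiv 0$.

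Since $v^\pm \neq 0$, Lemma \ref{l7.2} supplies a unique pair $(l_v, m_v)$ of positive numbers such that $u_\l := l_v v^+ + m_v v^- \in M_\l$. The inequality $J_\l(u_\l) \geq c_\l$ is automatic. For the reverse, invoke Lemma \ref{l7.3} applied to each $u_n \in M_\l$: the global maximum of $H_{u_n}$ is attained uniquely at $(1,1)$, hence
\[ J_\l(l_v u_n^+ + m_v u_n^-) = H_{u_n}(l_v, m_v) \leq H_{u_n}(1,1) = J_\l(u_n). \]
Weak lower semicontinuity of the Gagliardo seminorm raised to the power $p$, strong $L^q$ convergence for the polynomial term, and strong convergence of $u_n^\pm$ in $L^{2Nr/(2N-\a)}(\O)$ (which follows from the $L^q$ convergence, since $2r < q$ and $\a < N$ give $2Nr/(2N-\a) < q$) combined with the Hardy--Littlewood--Sobolev inequality yield
\[ J_\l(u_\l) \leq \liminf_{n\to\infty} J_\l(l_v u_n^+ + m_v u_n^-) \leq \liminf_{n\to\infty} J_\l(u_n) = c_\l, \]
and $J_\l(u_\l) = c_\l$ follows.

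The main delicate point is reconciling the weak limit with the nonlinear structure: because of the nonlocal cross terms between $u^+$ and $u^-$ in both the fractional $p$-Laplacian and the Choquard integral, one cannot directly decompose $J_\l(u_n)$ along the sign parts and pass to the limit in each piece. The projection-to-$M_\l$ argument via Lemma \ref{l7.3} sidesteps this: rather than showing directly that the weak limit $v$ lies in $M_\l$, I rescale $v^+$ and $v^-$ independently by $(l_v,m_v)$ to land on $M_\l$, and then exploit the maximality of $H_u(1,1)$ on $M_\l$ to squeeze the infimum value through weak lower semicontinuity. A secondary technical point, that $u_n^\pm$ are themselves bounded in $\w$ and weakly converge to $v^\pm$, is ensured by the subadditivity inequality in Remark \ref{r8.1}(a) together with the pointwise identification of the limit.
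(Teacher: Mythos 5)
Your argument is correct, and while it follows the same skeleton as the paper (minimizing sequence, boundedness, weak limit $v$ with $v^\pm\neq 0$ by Lemma \ref{l7.4}, projection $l_v v^+ + m_v v^-\in M_\l$ via Lemma \ref{l7.2}), it closes the argument by a genuinely different final step. The paper first proves $l_v,m_v\le 1$ by comparing the Nehari identity for $l_v v^+ + m_v v^-$ with the Fatou limit of the identities for $u_n$, and then works with the reduced quantity $J_\l(u)-\frac{1}{2r}\langle J_\l'(u),u\rangle=\left(\frac1p-\frac1{2r}\right)\|u\|^p+\l\left(\frac1{2r}-\frac1q\right)|u|_q^q$, which contains no Choquard term, controlling the nonlocal cross terms through the pointwise inequalities $|l_v v^+(x)-m_v v^-(y)|^p\le |v^+(x)-v^-(y)|^p$ valid precisely because $l_v,m_v\le 1$. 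You instead exploit the maximality of $H_{u_n}$ at $(1,1)$ for $u_n\in M_\l$ (Lemma \ref{l7.3} combined with Case 1 of Lemma \ref{l7.2}) to get $J_\l(l_v u_n^+ + m_v u_n^-)\le J_\l(u_n)$ for the fixed pair $(l_v,m_v)$, and then pass to the limit using weak lower semicontinuity of the Gagliardo seminorm together with strong convergence of the $L^q$ term and of the Choquard term. Your route dispenses entirely with the $l_v,m_v\le 1$ step and the cross-term inequalities, which is cleaner; the price is that you must know the Choquard term converges along the rescaled sequence, which is fine here because $r<p_{s,\a}^\ast$ makes $2Nr/(2N-\a)$ subcritical (and indeed $<q$, as you note), but would fail at the critical exponent, where the paper's device of eliminating the Choquard term before taking limits is the more robust one. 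A minor difference in output: the paper additionally concludes $l_v=m_v=1$, i.e.\ the weak limit itself lies in $M_\l$, whereas you only produce the projected minimizer; nothing downstream requires the stronger conclusion, so your version suffices for Theorem \ref{t2.11}.
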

	\begin{proof}
		Let $(u_n)_{n\in \mathbb N}\subset M_\l$ be a minimizing sequence i.e., $J_\l(u_n)\rightarrow c_\l$ as $ n \rightarrow \infty$.
		In particular, $J_\l(u_n)$ is bounded and so we can find $C>0$ such that $J_\l(u_n)\leq C$ for all $n$. Now as in Lemma \ref{l7.5}, we see that $(u_n)_{n\in \mathbb N}$ is uniformly bounded in $\w$. Then by reflexivity and the compact embedding there exist $u_\l \in \w$ such that 
		\begin{equation}
			u_n^\pm \rightharpoonup u_\l^\pm~\text{in}~\w,
		\end{equation}
		\begin{equation}\label{8.14}
			u_n^\pm \rightarrow u_\l^\pm~\text{in}~L^\nu(\O),~\text{for all}~\nu \in [1,p_s^\ast),
		\end{equation}
		\begin{equation}\label{8.15}
			u_n^\pm(x)\rightarrow u_\l
			^\pm(x)~\text{a.e}~x \in \O.
		\end{equation}
		Clearly, by Lemma \ref{l7.4}, $u_\l^\pm \not=0$. So by Lemma \ref{l7.2} there exist $(l_\l,m_\l)\in [0,\infty)\times [0,\infty)$ such that $l_\l u_\l^+ +m_\l u_\l^- \in M_\l$. We claim that $l_\l,m_\l \leq 1$. Without loss of generality assume that $l_\l\leq m_\l$. Then from $\langle J_\l^{\prime}(l_\l u_\l^+ +m_\l u_\l^-),m_\l u_\l^-\rangle=0$ and $l_\l\leq m_\l$, we have as in Lemma \ref{7.2}
		\begin{equation}\label{7.16}
			\frac{1}{m_\l^{2r-p}} X^-(u_\l)\geq \l m_\l^{q-2r}|u_\l^-|_q^q-Y^-(u_\l).
		\end{equation}
		Again since $u_n \in M_\l$, we have $\langle J_\l^{\prime}(u_n),u_n^-\rangle=0$. This implies 
		\begin{equation}\label{7.17}
			X^-(u_n)=\l|u_n^-|^q_q-Y^-(u_n).
		\end{equation}
		Taking limit $n \rightarrow \infty$ in \eqref{7.17} and using \eqref{8.14}, \eqref{8.15} and Fatou's Lemma, we obtain
		\begin{equation}\label{7.18}
			X^-(u_\l)\leq \l|u_\l^-|_q^q-Y^-(u_\l).
		\end{equation}
		On subtracting \eqref{7.18} from \eqref{7.16}, we obtain
		\begin{equation}\label{7.19}
			\left(\frac{1}{m_\l^{2r-p}}-1\right)X^-(u_\l)\geq \l (m_\l^{q-2r}-1)|u_\l^-|_q^q.
		\end{equation}
		Clearly, $m_\l>1$ yields a contradiction to \eqref{7.19}. Hence we have $l_\l\leq m_\l \leq 1$. Now,
		\begin{align}\nonumber\label{7.20}
			c_\l\leq &J_\l(l_\l u_\l^+ +m_\l u_\l^-)\\ \nonumber
			=& J_\l(l_\l u_\l^+ +m_\l u_\l^-)-\frac{1}{2r}\langle J_\l^{\prime}(l_\l u_\l^+ +m_\l u_\l^-),l_\l u_\l^+ +m_\l u_\l^-\rangle\\ \nonumber
			=& \left(\frac{1}{p}-\frac{1}{2r}\right) \|l_\l u_\l^+ +m_\l u_\l^-\|^p + \l \left(\frac{1}{2r}-\frac{1}{q}\right)|l_\l u_\l^+ +m_\l u_\l^-|_q^q\\ \nonumber
			=&\left(\frac{1}{p}-\frac{1}{2r}\right)\left[l_\l^p\int\limits_{\O^+}\int\limits_{\O^+}\frac{|u_\l^+(x)-u_\l^+(y)|^p}{|x-y|^{N+ps}}dydx+ \int\limits_{\O^-}\int\limits_{\O^-}\frac{|u_\l^-(x)-u_\l^-(y)|^p}{|x-y|^{N+ps}}dydx\right]\\ \nonumber
			+&\left(\frac{1}{p}-\frac{1}{2r}\right)\left[\int\limits_{\O^+}\int\limits_{\O^-}\frac{|l_\l u_\l^+(x)-m_\l u_\l^-(y)|^p}{|x-y|^{N+ps}}dydx+\int\limits_{\O^-}\int\limits_{\O^+}\frac{|l_\l u_\l^+(y)-m_\l u_\l^-(x)|^p}{|x-y|^{N+ps}}dydx\right]\\ \nonumber
			+&\left(\frac{1}{p}-\frac{1}{2r}\right)\left[l_\l^p\int\limits_{\O^+}\int\limits_{\O^c}\frac{|u_\l^+(x)|^p}{|x-y|^{N+ps}}dydx+l_\l^p\int\limits_{\O^c}\int\limits_{\O^+}\frac{|u_\l^+(y)|^p}{|x-y|^{N+ps}}dydx\right]\\ \nonumber	+&\left(\frac{1}{p}-\frac{1}{2r}\right)\left[m_\l^p\int\limits_{\O^-}\int\limits_{\O^c}\frac{|u_\l^-(x)|^p}{|x-y|^{N+ps}}dydx+m_\l^p\int\limits_{\O^c}\int\limits_{\O^-}\frac{|u_\l^-(y)|^p}{|x-y|^{N+ps}}dydx\right]\\
			+& \l \left(\frac{1}{2r}-\frac{1}{q}\right)\left(l_\l^q |u_\l^+|_q^q +m_\l^q |u_\l^-|_q^q\right).
		\end{align}
		Since $l_\l,m_\l\leq1$, \begin{eqnarray}\nonumber\label{7.21}
			0\leq l_\l u_\l^+(x)+m_\l(-u_\l^-(y))\leq u_\l^+(x)-u_\l^-(y)\\
			\implies | l_\l u_\l^+(x)-m_\l u_\l^-(y)|^p \leq |u_\l^+(x)-u_\l^-(y)|^p.
		\end{eqnarray}
		Similarly we have \begin{equation}\label{7.22}
			|m_\l u_\l^-(x)-l_\l u_\l^+(y)|^p\leq |u_\l^-(x)-u_\l^+(y)|^p.
		\end{equation}
		Using \eqref{7.21} and \eqref{7.22} in \eqref{7.20}, we obtain
		\begin{align*}
			c_\l \leq&\left(\frac{1}{p}-\frac{1}{2r}\right)\left[\int\limits_{\O^+}\int\limits_{\O^+}\frac{|u_\l^+(x)-u_\l^+(y)|^p}{|x-y|^{N+ps}}dydx +\int\limits_{\O^-}\int\limits_{\O^-}\frac{|u_\l^-(x)-u_\l^-(y)|^p}{|x-y|^{N+ps}}dydx\right]\\ &+\left(\frac{1}{p}-\frac{1}{2r}\right)\left[\int\limits_{\O^+}\int\limits_{\O^-}\frac{| u_\l^+(x)- u_\l^-(y)|^p}{|x-y|^{N+ps}}dydx+\int\limits_{\O^-}\int\limits_{\O^+}\frac{| u_\l^+(y)- u_\l^-(x)|^p}{|x-y|^{N+ps}}dydx\right]\\ 
			&+\left(\frac{1}{p}-\frac{1}{2r}\right)\left[\int\limits_{\O^+}\int\limits_{\O^c}\frac{|u_\l^+(x)|^p}{|x-y|^{N+ps}}dydx+\int\limits_{\O^c}\int\limits_{\O^+}\frac{|u_\l^+(y)|^p}{|x-y|^{N+ps}}dydx\right]\\ 
			&+\left(\frac{1}{p}-\frac{1}{2r}\right)\left[\int\limits_{\O^-}\int\limits_{\O^c}\frac{|u_\l^-(x)|^p}{|x-y|^{N+ps}}dydx+\int\limits_{\O^c}\int\limits_{\O^-}\frac{|u_\l^-(y)|^p}{|x-y|^{N+ps}}dydx\right]\\
			&+ \l \left(\frac{1}{2r}-\frac{1}{q}\right)\left( |u_\l^+|_q^q + |u_\l^-|_q^q\right)\\ 
			=& \left(\frac{1}{p}-\frac{1}{2r}\right)\|u_\l\|^p+ \l \left(\frac{1}{2r}-\frac{1}{q}\right) |u_\l|_q^q\\ 
			\leq & \liminf_n\left(\left(\frac{1}{p}-\frac{1}{2r}\right)\|u_n\|^p+ \l \left(\frac{1}{2r}-\frac{1}{q}\right) |u_n|_q^q\right)\\
			=& \lim\limits_n \left(J_\l(u_n)-\frac{1}{2r}\langle J_\l^{\prime}(u_n),u_n \rangle \right)\\
			=&c_\l.
		\end{align*}
		Hence $l_\l=m_\l=1$ and $J_\l(u_\l)=c_\l$.
	\end{proof}
	\begin{lemma}
		We have	$u_\l$ is a critical point of $J_\l$, where $u_\l$ is obtained as in Lemma \ref{l7.6}.
	\end{lemma}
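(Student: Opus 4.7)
The plan is to argue by contradiction: assuming $J_\lambda^\prime(u_\lambda) \neq 0$ in $W^{-s,p^\prime}(\O)$, I would construct a small perturbation of $u_\lambda$ sitting inside $M_\lambda$ whose energy is strictly below $c_\lambda$, contradicting Lemma \ref{l7.6}. The central ingredient is that, by Lemmas \ref{l7.2} and \ref{l7.3}, for every $v \in \w$ with $v^\pm \not\equiv 0$ the map $(l,m) \mapsto J_\lambda(l v^+ + m v^-)$ has a unique strict maximizer $(l_v, m_v) \in (0,\infty)^2$, and this maximizer lies on $M_\lambda$; applied to $v = u_\lambda \in M_\lambda$ this yields $(l_{u_\lambda}, m_{u_\lambda}) = (1,1)$.

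First, using the continuity of $J_\lambda^\prime$ at $u_\lambda$, I would fix constants $\sigma, \delta > 0$ with $\|J_\lambda^\prime(v)\|_{W^{-s,p^\prime}(\O)} \geq \sigma$ whenever $\|v - u_\lambda\| \leq 3\delta$. Setting $g(l,m) := l u_\lambda^+ + m u_\lambda^-$ on a square $D := [1-\tau, 1+\tau]^2$, the strict-maximum property from Lemma \ref{l7.3} gives, for $\tau > 0$ sufficiently small, that $\beta := \max_{\partial D} J_\lambda(g(l,m)) < c_\lambda$ and $\|g(l,m) - u_\lambda\| < \delta$ for every $(l,m) \in D$. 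I would then invoke the quantitative deformation lemma (in the form used e.g. in Willem's monograph) to produce a continuous map $\Phi : \w \to \w$ which is the identity outside $B_{2\delta}(u_\lambda)$, never increases $J_\lambda$, and satisfies $J_\lambda(\Phi(v)) \leq c_\lambda - \epsilon_0$ for every $v$ with $\|v - u_\lambda\| \leq \delta$ and $J_\lambda(v) \leq c_\lambda + \epsilon_0$, where $\epsilon_0 > 0$ is chosen smaller than $(c_\lambda - \beta)/2$. The composition $\tilde g := \Phi \circ g : D \to \w$ then satisfies $\sup_D J_\lambda(\tilde g) < c_\lambda$, while $\tilde g = g$ on $\partial D$.

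The final and most delicate step is to show that $\tilde g(D) \cap M_\lambda \neq \emptyset$. For this I would use a topological argument \`a la Miranda: consider $\Psi : D \to \Rr^2$ defined by $\Psi(l,m) := \bigl(\langle J_\lambda^\prime(\tilde g(l,m)), (\tilde g(l,m))^+\rangle,\, \langle J_\lambda^\prime(\tilde g(l,m)), (\tilde g(l,m))^-\rangle\bigr)$. On $\partial D$ one has $\tilde g = g$, and the sign computations already carried out in the proof of Lemma \ref{l7.2} (the positivity of $K$ and $I$ for small arguments and their negativity for large ones) show that the two components of $\Psi$ take opposite signs on opposite edges of $D$. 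Miranda's theorem (equivalently, a nonzero Brouwer degree) then yields $(l_0, m_0) \in \mathrm{int}\,D$ at which $\Psi$ vanishes, so that $\tilde g(l_0, m_0) \in M_\lambda$ with $J_\lambda(\tilde g(l_0, m_0)) < c_\lambda$, contradicting the definition of $c_\lambda$. The hard part is exactly this last step: one must simultaneously preserve the sign pattern of $\Psi$ on $\partial D$ (which follows because the deformation is inert there, provided $\tau$ is small enough) and verify that $\tilde g(l_0, m_0)$ still has both $(\cdot)^\pm$ parts nontrivial so that it genuinely belongs to $M_\lambda$. This requires building the pseudo-gradient flow inside the open cone $\{v : v^\pm \not\equiv 0\}$ near $u_\lambda$, which is feasible because Lemma \ref{l7.4} ensures a uniform positive lower bound $\|u_\lambda^\pm\| \geq k_1$ and the flow can be stopped long before either $\pm$ part degenerates.
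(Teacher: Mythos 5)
Your argument is correct and is essentially the paper's own proof: the paper disposes of this lemma by citing \cite[Theorem 1.3, Step 2]{SX}, and that step is precisely the quantitative deformation lemma applied on a small square around $(1,1)$ combined with a Miranda/degree argument, exploiting that $(1,1)$ is the unique strict maximizer of $H_{u_\l}$ from Lemma \ref{l7.3}. Your reconstruction, including the care taken to keep both signed parts nontrivial along the deformation, matches the cited argument.
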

	\begin{proof}
		The proof follows exactly as in \cite[Theorem 1.3 (step 2)]{SX}. 
	\end{proof}
	\begin{lemma}\label{8.8}
		For any $u \in M_\l$, there exists $\tilde{l}, \tilde{m} \in (0,1]$ such that $\tilde{l} u^+,~\tilde{m} u^- \in N_\l$.
	\end{lemma}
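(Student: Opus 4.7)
\medskip

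\textbf{Proof proposal for Lemma \ref{8.8}.} The plan is to obtain $\tilde{l}$ and $\tilde{m}$ directly from Lemma \ref{l7.1} and then verify the size bound $\tilde{l},\tilde{m}\leq 1$ by comparing two scalar test inequalities: the definition of the Nehari level for the scaling $tu^\pm$, and a one-sided inequality on $u^\pm$ that comes from $u\in M_\l$ together with the $T$-type monotonicity contained in Remark \ref{r8.2} and the positivity of the Choquard cross interaction in Remark \ref{r8.1}.

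Existence is immediate: since $u\in M_\l$ we have $u^\pm\neq 0$ (Lemma \ref{l7.4}), so Lemma \ref{l7.1} applied to $u^+$ and to $u^-$ produces unique $\tilde{l},\tilde{m}>0$ with $\tilde{l}u^+,\tilde{m}u^-\in N_\l$. The substance is the upper bound. I would argue only for $\tilde{l}$, the argument for $\tilde{m}$ being identical. From Remark \ref{r8.2}, $\langle \pl u, u^+\rangle\geq \langle \pl u^+, u^+\rangle=\|u^+\|^p$. Next, using that $u^+$ and $u^-$ have disjoint supports, one checks that
\begin{equation*}
\int_\O u^{q-1}u^+\,dx=|u^+|_q^q,
\qquad
\int_\O\!\int_\O \frac{|u(y)|^r u(x)^{r-1}u^+(x)}{|x-y|^\alpha}\,dy\,dx \;\geq\; [u^+]_r^{2r},
\end{equation*}
the latter because the nonnegative cross term involving $|u^-(y)|^r|u^+(x)|^r$ can be dropped. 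Combining these with $\langle J_\l'(u),u^+\rangle=0$ gives
\begin{equation*}
\langle J_\l'(u^+),u^+\rangle \;=\; \|u^+\|^p-\l|u^+|_q^q+[u^+]_r^{2r}\;\leq\;\langle J_\l'(u),u^+\rangle\;=\;0.
\end{equation*}

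To finish, I consider the scalar map $\psi(t):=\langle J_\l'(tu^+),tu^+\rangle=t^p\|u^+\|^p-\l t^q|u^+|_q^q+t^{2r}[u^+]_r^{2r}$ and factor out $t^p$ to define $h(t):=\|u^+\|^p+t^{2r-p}[u^+]_r^{2r}-\l t^{q-p}|u^+|_q^q$. Since $p<2r<q$, $h(0)>0$, $h(t)\to-\infty$ as $t\to\infty$, and $h'$ vanishes exactly once, so $h$ is first increasing and then decreasing. The unique zero of $h$ on $(0,\infty)$ is exactly the point $\tilde{l}$ produced by Lemma \ref{l7.1}, and $h>0$ on $(0,\tilde{l})$ while $h<0$ on $(\tilde{l},\infty)$. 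The inequality $\langle J_\l'(u^+),u^+\rangle\leq 0$ reads $h(1)\leq 0$, which forces $1\geq \tilde{l}$. Hence $\tilde{l}\in(0,1]$, and by the symmetric argument $\tilde{m}\in(0,1]$.

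The only delicate step is the comparison $\langle J_\l'(u^+),u^+\rangle\leq \langle J_\l'(u),u^+\rangle$: the $\pl$ part is handled by Remark \ref{r8.2}, but the nonlocal Choquard term has to be checked by carefully splitting the double integral according to the sign of $u$ and using that the mixed $|u^+|^r|u^-|^r$ contribution appears with a favorable sign. Once this observation is in place, the remainder of the argument is an elementary monotonicity analysis of $h$.
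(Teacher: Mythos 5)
Your proposal is correct and follows essentially the same route as the paper: both arguments hinge on the comparison $\langle J_\l'(v),v^+\rangle\ \geq\ \langle J_\l'(v^+),v^+\rangle$ from Remark \ref{r8.2} (the fractional part plus the favorable sign of the mixed $|u^+|^r|u^-|^r$ Choquard term), combined with $\langle J_\l'(u),u^+\rangle=0$ on $M_\l$ and the Nehari condition for $\tilde{l}u^+$, exploiting $p<2r<q$. The only difference is presentational: the paper subtracts the two resulting algebraic relations after dividing by $\tilde{l}^{2r}$ and reads off a sign contradiction when $\tilde{l}>1$, whereas you evaluate the comparison at $t=1$ and conclude via the unimodal shape of the fiber map $h$ — the same mechanism in different clothing.
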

	\begin{proof}
		By Lemma \ref{l7.1} we know that there exists $\tilde{l},~\tilde{m}$ such that $\tilde{l} u^+,~\tilde{m} u^- \in N_\l.$ Next, we show that $\tilde{l} \in (0,1]$. The case for $\tilde{m}$ follows similarly. Since $ u \in M_\l$, we have $\langle J_\l^{\prime}(u),u^+\rangle=0$. This implies as in Lemma \ref{7.2}
		\begin{equation}\label{8.26}
			X^+(u)=\l |u^+|_q^q-Y^+(u).
		\end{equation} 
		Again since $\tilde{l} u^+ \in N_\l$, $\langle J_\l^\prime(\tilde{l}u^+),\tilde{l}u^+\rangle=0.$ This implies  $\langle J_\l^\prime(\tilde{l}u),\tilde{l}u^+\rangle\geq 0,$ and so
		\begin{equation}\label{8.27}
			\frac{1}{\tilde{l}^{2r-p}}X^+(u)\geq\l \tilde{l}^{q-2r}|u^+|_q^q-Y^+(u).
		\end{equation}
		On subtracting \eqref{8.26} from \eqref{8.27}, we obtain
		\begin{equation}\label{8.28}
			\left(\frac{1}{\tilde{l}^{2r-p}}-1\right)X^+(u)\geq \l (\tilde{l}^{q-2r}-1)|u^+|_q^q.
		\end{equation}
		Clearly $\tilde{l}>1$ gives a contradiction to \eqref{8.28}. Therefore, $\tilde{l} \in (0,1]$.\\
		This completes the proof.
	\end{proof}
	\textbf{Proof of Theorem \ref{t2.11}}:
	\begin{proof}
		Combining Lemmas \ref{l7.2} to \ref{l7.6}, we conclude that $u_\l$ is a least energy sign-changing solution of $(P_\l)$. Also by Lemma \ref{8.8} there exist $\tilde{l}_\l$, $\tilde{m}_\l \in (0,1]$ such that $\tilde{l}_\l u_\l^+$, $\tilde{m}_\l u_\l^- \in N_\l$. Now using $\|u_\l\|^p > \|u_\l^+\|^p +\|u^-_\l\|^p$, we have
		\begin{align*}
			c_\l=& J_\l(u_\l)\\
			=& J_\l(u_\l)-\frac{1}{2r}\langle J_\l^{\prime}(u_\l),u_\l\rangle\\
			=&	 \left(\frac{1}{p}-\frac{1}{2r}\right)\|u_\l\|^p+\left(\frac{1}{2r}-\frac{1}{q}\right)|u_\l|_q^q\\
			>&\left(\frac{1}{p}-\frac{1}{2r}\right)\|u^+_\l\|^p+\left(\frac{1}{2r}-\frac{1}{q}\right)|u^+_\l|_q^q+ \left(\frac{1}{p}-\frac{1}{2r}\right)\|u^-_\l\|^p+\left(\frac{1}{2r}-\frac{1}{q}\right)|u^-_\l|_q^q\\
			\geq&\left(\frac{1}{p}-\frac{1}{2r}\right)\|\tilde{l}_\l u^+_\l\|^p+\left(\frac{1}{2r}-\frac{1}{q}\right)|\tilde{l}_\l u^+_\l|_q^q + \left(\frac{1}{p}-\frac{1}{2r}\right)\|\tilde{m}_\l u^-_\l\|^p \\ & \quad \quad +\left(\frac{1}{2r}-\frac{1}{q}\right)|\tilde{m}_\l u^-_\l|_q^q\\
			=& J_\l(\tilde{l}u^+_\l)-\frac{1}{2r}\langle J_\l^{\prime}(\tilde{l}u^+_\l),\tilde{l}u^+_\l\rangle+J_\l(\tilde{l}u^+_\l)-\frac{1}{2r}\langle J_\l^{\prime}(\tilde{m}u^-_\l),\tilde{m}u^-_\l\rangle\\
			\geq &2m_\l.
		\end{align*}
		This completes the proof.
	\end{proof}
	\textbf{Funding:} The first author thanks the CSIR(India) for financial support in the form of a Junior Research Fellowship, Grant Number $09/086(1406)/2019$-EMR-I. The second author is funded by IFCAM (Indo-French Centre for Applied Mathematics) IRL CNRS 3494.

\end{document}